\newtheorem{theorem}{Theorem}
\newtheorem{corollary}[theorem]{Corollary}
\newtheorem{proposition}[theorem]{Proposition}
\newtheorem{lemma}[theorem]{Lemma}
\newtheorem{example}{Example}
\newcommand{\mathds}[1]{#1}
\newcommand{\R}{\mathbb{R}}
\newcommand{\w }[1]{\widehat{#1}}
\renewcommand{\r}{ \rightarrow }
\newcommand{\lr}{ \longrightarrow }
\renewcommand{\t}[1]{\text{#1}}
\DeclareMathOperator{\diam}{diam}
\DeclareMathOperator{\var}{var}
\DeclareMathOperator{\sign}{sign} 
\DeclareMathOperator{\argmin}{argmin}
\begin{document}

\title{\bf On an extension of \\the promotion time cure model}

\renewcommand{\baselinestretch}{1.2}

\author{
%\addtocounter{footnote}{2}
{\large Fran\c cois P\textsc{ortier} 
\footnote{\small{T\'el\'ecom ParisTech, Universit\'e Paris-Saclay, France. Email: francois.portier@gmail.com}}} \\ {\it T\'el\'ecom ParisTech}
\and
\addtocounter{footnote}{2}
{\large Ingrid V\textsc{{an} K{eilegom}}
\footnote{\small{ORSTAT, KU Leuven, Belgium. Email: ingrid.vankeilegom@kuleuven.be.}} $^{, **}$ } \\ {\it KU Leuven} \vspace*{.3cm}
\and
\addtocounter{footnote}{2}
{\large Anouar E\textsc{l} G\textsc{houch}
\footnote{\small{Institute of Statistics, Biostatistics and Actuarial Sciences, Universit\'e catholique de Louvain, Belgium. Email: anouar.elghouch@uclouvain.be.}}} \\ {\it Universit\'e catholique de Louvain}
}

\date{\today}

\maketitle

\vspace{-1cm}

\noindent
\begin{abstract}
We consider the problem of estimating the distribution of time-to-event data that are subject to censoring and for which the event of interest might never occur, i.e., some subjects are cured. To model this kind of data in the presence of covariates, one of the leading semiparametric models is the promotion time cure model \citep{yakovlev1996}, which adapts the Cox model to the presence of cured subjects. Estimating the conditional distribution results in a complicated constrained optimization problem, and inference is difficult as no closed-formula for the variance is available. We propose a new model, inspired by the Cox model, that leads to a simple estimation procedure and that presents a closed formula for the variance. We derive some asymptotic properties of the estimators and we show the practical behaviour of our procedure by means of simulations. We also apply our model and estimation method to a breast cancer data set.  \\
\end{abstract}

\smallskip
\noindent{\it Keywords:} Asymptotic inference; Cox model; Promotion time cure model; Right censoring; Survival analysis.

\def\baselinestretch{1.25}
\newpage
\normalsize
\baselineskip 18pt
\setcounter{footnote}{0}
\setcounter{equation}{0}

\section{Introduction}

Survival analysis has been the subject of many statistical studies in the past decades (see e.g.\ \cite{klein+m:2005,grambsch+t:2000}) and is commonly used in clinical trials (see e.g.\ \cite{collett:2015}), where the traditional main goal is to explain the death of patients having a certain disease. When analysing the effect of some covariates $X\in \R^{d}$ on a survival time $T\in \R_{\geq 0}$, a common approach in the literature is based on semiparametric estimation. The seminal paper by \cite{cox1972} introduces the so-called semiparametric \textit{proportional hazards} model, often referred to as the Cox model, which is given by the following set of conditional survival functions defined on $\R_{\geq 0}\times \R^{d}$ :
\begin{align*}
\mathcal P_{1}=\left\{ (t,x) \mapsto  S(t | x) =\exp(-\exp(\gamma^Tx) \Lambda( t ))  \  :\  \gamma\in\R^{d},\ \Lambda\in \mathcal G \right\},
\end{align*}
where $\mathcal G$ is the space of absolutely continuous cumulative hazard functions defined on $\R_{\geq 0}$. In this standard semiparametric model the elements are characterized by the Euclidean parameter $\gamma$, called the regression vector, and the infinite dimensional parameter $\Lambda$, called the cumulative hazard. These parameters are estimated by maximizing the \textit{profile likelihood} for $\gamma$ \citep{cox1972} and by computing the Breslow estimator for $\Lambda$ \citep{breslow1972}. Both estimators are \textit{nonparametric maximum likelihood estimators} (NPMLE), as defined in \cite{murphy1994}. Known asymptotic results include the asymptotic normality \citep{gill1982}, the semiparametric efficiency of the regression parameters \citep{wellner1988} as well as the cumulative hazard \citep{wellner1993,kosorok2008}, and the validity of general bootstrap schemes \citep{wellner1996}.

In many data sets, especially the ones arising from clinical trials, a certain proportion of the individuals will never experience the event of interest. These individuals are referred to as the \textit{cured subjects}. As the survival function $t\mapsto S(t)$ does not tend to $0$ as $t\rightarrow +\infty$ in that case (but rather tends to the proportion of cured subjects), specific models need to be considered to account for the \textit{improperness} of the distribution of $T$. The \textit{promotion time cure model} is an extension of the Cox model specially designed to handle the presence of cured subjects in the data. It is defined as the set of conditional survival functions defined on $\R_{\geq 0} \times \R^{d}$, given by
\begin{align*}
\mathcal P_2=\left\{ (t,x) \mapsto  S(t|x) =\exp\left(-\eta(\beta_1+\beta_2^Tx) F(t)\right) \ :\ \beta=(\beta_1,\beta_2) \in \R^{d +1},\  F \in  \mathcal F \right\},
\end{align*}
where $\mathcal F$ denotes the space of absolutely continuous cumulative distribution functions on $\R_{\geq 0} $ and $\eta:\mathbb R\rightarrow \mathbb R_{> 0} $ is a given function. This model was introduced by \cite{yakovlev1996} and seems appropriate to treat cure data as, for every $x\in \mathbb R^{d}$, $\lim_{t\r +\infty}S(t|x) >0$, so that each subject has a positive chance of being cured. In model $\mathcal P_2$, the parameter vector $\beta$ has an intercept whereas $\gamma$ in model $\mathcal P_1$ does not.  This is because $\lim_{t \rightarrow +\infty} \Lambda(t) = +\infty$ and hence an intercept in model $\mathcal P_1$ would not be identified, whereas in model $\mathcal P_2$ the function $\Lambda(t)$ is replaced by $F(t)$, which tends to 1 as $t \rightarrow +\infty$.  Estimation of $\mathcal P_2$ has been studied by \cite{tsodikov1998a,tsodikov1998b,tsodikov2001,chen1999,ibrahim2001,tsodikov2003,zeng2006,portier+e+v:2017}, among many others. Certain parallels might be drawn between the statistical properties related to the estimators of the classical Cox model and the ones related to the promotion time cure model. The classical estimators of $\beta$ and $F$ are the NPMLE's \citep{zeng2006}. In \cite{zeng2006}, the authors show that the resulting NPMLE is asymptotically normal and moreover that the estimated vector of regression parameters is semiparametrically efficient. In \cite{portier+e+v:2017} it is shown that the whole model is estimated efficiently and the validity of a general weighted bootstrap is proved. 

There is still an important difference between the NPMLE's associated to models $\mathcal P_1$ and $\mathcal P_2$. The NPMLE of the Cox model has a much simpler expression than the NPMLE of the promotion time cure model. Within model $\mathcal P_1$, the estimated regression parameter maximizes a known (explicit) objective function and the estimated cumulative hazard is expressed through a closed formula \citep{gill1982}. Within Model $\mathcal P_2$, the estimated regression parameter is also the maximizer of a certain objective function, but this time the objective function is implicitly defined \citep{portier+e+v:2017}. Moreover, the same is true for the estimated cumulative hazard in $\mathcal P_2$, which is only known up to some quantity implicitly defined. The previous features involve important complications that intervene at two different stages. First, estimators from $\mathcal P_2$ are more difficult to describe, theoretically, than  estimators from $\mathcal P_1$. This eventually deteriorates the accuracy of the confidence intervals or of the testing procedures. Second, the computation of the estimators in $\mathcal P_2$  has some numerical difficulties, e.g., long computation time, problems with local minima, etc. Given this, the question is to know, whether or not, it is legitimate to rely on a complicated estimation procedure for $\mathcal P_2$? In other words, does the presence of cured subjects in the data prevents us from having an estimation procedure as simple as in the Cox model? 

The aim of this paper is to provide a new model dedicated to cure data analysis and for which the NPMLE overpasses the previous difficulties associated with $\mathcal P_2$.

The undesirable complications when estimating $\mathcal P_2$ come from the particular nature of the parameter space $\mathcal F$. This space is formed by cumulative distribution functions $F$ that satisfy the constraint $\lim_{t\rightarrow \infty} F (t) = 1$. Such a constraint is taken into account with the help of a Lagrange procedure involving an additional parameter being implicitly defined, the \textit{Lagrange multiplier}. It turns out that this constraint can be alleviated by including an additional parameter in the model, replacing $F$ by $\theta F$, with $\theta>0$. We define the set of conditional survival functions $\R_{\geq 0} \times \R^{d}\rightarrow \R_{\geq 0} $, given by
\begin{align*}
\mathcal P_3=\big\{  (t,x) \mapsto S(t|x) = \exp(-g(\gamma,x)\theta F(t)) \ :\ (\gamma,\theta)\in \R^q\times \R_{>0} ,\  F \in  \mathcal F \big\},
\end{align*}
where $g:\mathbb R^q \times \mathbb R^{d} \rightarrow \R_{> 0} $ is a given function and $q\in \mathbb N$. Note that in the present form, $\mathcal P_3$ handles biological models as developed in \cite{chen1999} to analyse time to relapse of cancer through the distribution of the carcinogenic cells. It includes also a cure version of the Cox model when $g(\gamma,x)=\exp(\gamma^Tx)$. In this case, it coincides with $\mathcal P_2$ for which $\eta = \exp$. Otherwise $\mathcal P_2$ and $\mathcal P_3$ are different. In $\mathcal P_3$, the role of $\theta$ is interpreted as a simple multiplicative effect on the cumulative distribution, whereas the effect of $\beta_1$ in $\mathcal P_2$ must be analysed depending on the shape of the function $\eta$. 

The main contributions of the paper are listed below. 
\begin{enumerate}[(i)]

\item As the NPMLE of $\mathcal P_3$ is much simpler to evaluate than the one associated to $\mathcal P_2$, the proposed methodology provides a significant improvement in terms of computational ease. In particular, we show that the NPMLE's associated with $\mathcal P_2$ and $\mathcal P_3$ coincide when $\eta=\exp$ and $g(\gamma, x) = \exp(\gamma^ T x)$. Hence our approach provides a new way to compute the NPMLE of $\mathcal P_2$ when $\eta=\exp$ (most commonly used) which is simpler than the existing procedure \cite{zeng2006,portier+e+v:2017}. 
\item We derive the asymptotics of the NPMLE associated with $\mathcal P_3$.  As in the case of the Cox model, we have closed-formulas for the variance of the limiting Gaussian distributions. This allows us to develop some tests and to build confidence intervals on some quantities of interest as for instance the proportion of cure given the value of a covariate $x$. The finite sample size accuracy of the confidence intervals is investigated with the help of simulations.
\item Moreover, as the function $g$ needs to be chosen by the analyst, we consider a likelihood-based methodology to select an appropriate function $g$ among a family of proposals. Such an approach is also followed by \cite{huang:2006}, who investigate spline estimation of the function $g$ in the case of the classical Cox model.
\end{enumerate}

In section \ref{s2} we present the framework of the paper and derive the NPMLE of model $\mathcal P_3$. We also consider the links with the NPMLE of $\mathcal P_2$.
In section \ref{s3}, the asymptotic behaviour of the NPMLE of $\mathcal P_3$ is studied. In sections \ref{s6} and 5, we provide simulations and a real data analysis to give some insights in the finite sample performance of our approach. The proofs are collected in the Appendix.

\section{The data, the model, the estimator}\label{s2}

\subsection{Framework}\label{s21}

We focus on the standard right censoring context : the lifetime $T$ of interest is right censored by some random variable $C$ so that we only observe $Y=\min(T,C)$, $\delta = \mathds 1{\{T\leq C\}}$ and the vector of covariates $X$. This means that we know whether the variable of interest $T$ has been observed or censored. The covariates $X$ are in contrast always observed, and we further denote by $ \mathcal S\subseteq \mathbb R^d$ their support. 
We suppose conditional independence between $T$ and $ C$, given $X$. In practice, as is the case for instance in clinical trials, $C$ might be bounded. This prevents us from observing any cured subjects, defined by $T=+\infty$. A way around this problem is to assume the existence of a threshold $\tau\in \R$ such that 
\begin{align*}
\{T>\tau\}  \Rightarrow  \{T=+\infty\} .
\end{align*}
Therefore whenever $Y$ will be observed to be greater than $\tau$, the individual will be known to be cured. We use model $\mathcal P_3$ for modelling the distribution of $T$ given $X$. Hence we further assume that the conditional survival function of $T$ given $X=x$ is given by $S_0(t|x) = \exp(-g (\gamma_0, x) \theta_0 F_0(t)  )$, for some $\gamma_0\in \mathbb R^q$, $\theta_0\in \mathbb R_{> 0} $, and $F_0$ an absolutely continuous cumulative distribution function. Let $P$ denote the probability measure associated to $(Y,C,X)$. Supposing in addition that $P(C>\tau|X)>0$ a.s., we obtain that $P(Y>\tau |X)>0$ a.s., meaning that every individual can be cured. These assumptions are stated in Section \ref{s3} in (H\ref{cond:identification1}).

A central object in our study is the counting process $N(y)=\delta \mathds 1_{\{Y\leq y \}}$, $y\in \mathbb R _{\geq 0}$, as it possesses some useful martingale properties as developed in \cite{fleming1991} and \cite{andersen+b+g+k:1993}. Define the random process $R(y)=\Delta\mathds 1_{\{Y\geq  y\}}+(1-\Delta)$, $y\in \mathbb R _{\geq 0}$, with $\Delta = \mathds 1_{\{Y\leq \tau \}}$. It equals $1$ whenever the individual is still at risk. The presence of cure implies that $R= 1$ has positive probability. The compensator of $N$ with respect to the $\sigma$-field $\mathcal F_y$ generated by $\{N(u),\, \mathds 1_{\{ Y\leq u,\delta = 0\}},\, X \, :\, 0\leq u\leq y \}$ is the process $y\mapsto \int_0^y g(\gamma_0,X)R(u) \theta_0 dF_0(u)$. That is, $M$ defined by
\begin{align*}
\left\{ \begin{array}{l}
M(0)=0\\
dM(y) = dN(y) -  g(\gamma_0,X)R(y) \theta_0 dF_0(y),\qquad y\in \mathbb R_{\geq 0} 
\end{array} \right.
\end{align*}
is a martingale with respect to $\mathcal F_y$ \cite[Theorem 1.3.1]{fleming1991}. In particular, we have the formula \citep[Theorem 1.5.1]{fleming1991}
\begin{align}\label{formula:martingale}
E\left[ \delta  h(Y,X)  \right] =\int   E\left[  h(u,X)  g(\gamma_0, X) R(u)\right] \theta_0 dF_0(u) 
\end{align}
for any bounded measurable function $h$. Finally, the following identity shall be useful : for any bounded measurable functions $h$ and $\tilde h$, we have  \citep[Theorem 2.4.2]{fleming1991}
\begin{align}\label{formula:quadratic_variation}
E\left[  \int h(u) dM(u) \int \tilde h(u) dM(u) \right] =  \int h(u)\tilde h(u)  E [g(\gamma_0, X) R(u)]  d\Lambda_0(u)   .
\end{align}

\subsection{Nonparametric maximum likelihood}\label{s22}

Let $(T_i,C_i,X_i)_{i\in \mathbb N}$ denote a sequence of independent and identically distributed random variables with law $P$, as described in the previous subsection. The underlying probability measure is denoted by $\mathbb P$. The estimator we consider shall be based on the observed variables : $Y_i=\min(T_i,C_i)$, $\delta_i = \mathds 1{\{T_i\leq C_i\}}$, $X_i$, $i=1,\ldots, n$. Let $N_i(y)=\delta_i \mathds 1_{\{Y_i\leq y \}}$, $R_i(y)=\Delta_i\mathds 1_{\{Y_i\geq y\}}+(1-\Delta_i)$, $\Delta_i = 1_{\{Y_i \le \tau\}}$,  and define the martingale $M_i = N_i- R_i$, for $i=1,\ldots, n$.

Under the current data generating process, assuming that $F$ is absolutely continuous, and assuming non-informative censoring \citep{sasieni:1992}, the likelihood of an observation $(y,\delta,x)$ in model $\mathcal P_3$ is given by
\begin{align}\label{formulalikelihood}
\t{Lik}(y,\delta,x) = \{g(\gamma,x)\theta f(y)\}^\delta\  \exp\Big[ - g(\gamma,x)\theta \{\Delta F(y)+ (1-\Delta)\}\Big],
\end{align}
where $f$ stands for the derivative of $F$.  Model $\mathcal P_3$ can be re-written as the set of all survival functions of the form $ \exp(-g(\gamma,x)\Lambda(t))$ where $\gamma\in \R^{q}$ and $\Lambda $ belongs to $\mathcal G$, the space of absolutely continuous cumulative hazards $\Lambda$ such that $\Lambda(\tau)=\lim_{y\rightarrow +\infty} \Lambda(y)=\theta $. Note that there is a one-to-one relationship between the two sets of parameters $(\theta,F)$ and $\Lambda$, i.e., $ \Lambda =\theta F$ and $\theta=\lim_{t\rightarrow +\infty} \Lambda(t)$. As a consequence the likelihood in (\ref{formulalikelihood}) can be expressed in terms of $(\gamma,\theta,F)$ or equivalently, in terms of $(\gamma, \Lambda)$. Switching from one parametrization to another is straightforward. For the sake of simplicity, we derive the NPMLE with respect to $(\gamma, \Lambda) $ in the next few lines. By following \cite{murphy1994}, the NPMLE is defined as
\begin{align}\label{maximumlikelihood}
 (\w \gamma, \w \Lambda) 
&= \underset{\gamma\in \mathbb R^q , \, \Lambda}{\text{argmax}} 
  \  \sum_{i=1}^n \Big[{\delta_i} \log(g(\gamma,X_i) \Lambda\{Y_i\}) -g(\gamma,X_i) \{\Delta_i \Lambda(Y_i)+ (1-\Delta_i)\Lambda(+\infty) \}\Big], 
\end{align}
the maximum is taken over $\Lambda$ lying in the space of cumulative hazard functions possibly discrete, and $\Lambda\{y\}=\Lambda(y)- \lim_{t\r y^-} \Lambda(t)$ is the size of the jump of $\Lambda$ at $y$. As is common practice for computing the NPMLE in semiparametric models, the above NPMLE might be profiled over the nuisance parameter $\Lambda$ \citep{murphy2000,kosorok2008}. Maximizing along submodels $d\Lambda_s = (1+sh)d\Lambda$, $s\in \mathbb R$, with $h$ a bounded real function, the value of $\Lambda$ which maximizes (\ref{maximumlikelihood}), for each $\gamma\in \mathbb R^q$, is a solution of
\begin{align*}
n^{-1}\sum_{i=1}^n \delta_i h(Y_i) - \int \w Q_\gamma (u)  h(u) d\Lambda(u)=0,
\end{align*}
with $\w Q_\gamma(u)=n^{-1}\sum_{i=1}^n g(\gamma,X_i) R_i(u)$. The solution of the previous equation is given by
\begin{align*}
\w \Lambda_\gamma (y) = n^{-1} \sum_{i=1}^n \frac{\delta_i\mathds 1 _{\{Y_i\leq y \}}}{\w Q_\gamma (Y_i) },\qquad \qquad y\in\mathbb R_{\geq 0}.
\end{align*}
This is then plugged into (\ref{maximumlikelihood}) to get that
\begin{align}
\left\{ \begin{array}{l}
 \displaystyle \w \gamma \in  \underset{\gamma\in \mathbb R^q}{\text{argmax}} 
  \ \prod_{i=1}^n \left \{ {g(\gamma,X_i)} / {\w Q_\gamma (Y_i) }\right\} ^{\delta_i}  \\
 \displaystyle \w \Lambda (y) =n^{-1} \sum_{i=1}^n  \w Q_{\w \gamma} (Y_i) ^{-1} {\delta_i\mathds 1 _{\{Y_i\leq y \}}},\qquad \qquad y\in\mathbb R_{\geq 0}.
\end{array} \right.  \label{NPMLEcoxmodel_gamma_Lambda}
\end{align}
 Back to the parameters $(\theta,F)$ of Model $\mathcal P_3$, the NPMLE is given by 
\begin{align}
  \left\{ \begin{array}{l}   \displaystyle \w\theta = n^{-1} \sum_{i=1}^n  \w Q_{\w \gamma} (Y_i)^{-1} {\delta_i}\\
 \displaystyle \w F(y) = (\w \theta n ) ^{-1} \sum_{i=1}^n  {\w Q_{\w \gamma} (Y_i)^{-1} } {\delta_i\mathds 1 _{\{Y_i\leq y \}}}, \qquad \qquad y\in\mathbb R_{\geq 0}. \end{array}  \right.   \label{NPMLE_coxmodel_theta_F}
\end{align}
At fixed sample size $n$, the quantities involved in the previous equations are well defined as soon as, for instance, there exists $i$ such that $\delta_i=1$ and the maximum in (\ref{NPMLEcoxmodel_gamma_Lambda}) can be taken over a known compact set $B\subset \mathbb R^q$ on which the function $\gamma \mapsto g (\gamma, x) $ is continuous, for every $x\in \mathcal S$.
%Note that $\inf_{i = 1,\ldots, n\,:\, \delta_i = 1 } \w  Q_{ \gamma} (Y_i) \geq   n^{-1} g(\gamma , X_{(n,\delta)})  $ where the corresponding pair $(Y_{(n,\delta)},X_{(n,\delta)})$ is such that $Y_{(n,\delta)} = \max_{i = 1,\ldots, n\,:\, \delta_i = 1 } Y_i $ (this bound corresponds to the case when none cure subject has been observed, i.e., $Y_i\leq \tau  $ for all $i=1,\ldots, n $). 

Note also that the estimation of the parameters depends only on the observed variables $(Y_i,\delta_i,X_i)$ such that $Y_i\leq \tau$, and $(\Delta_i,X_i)$ such that $Y_i>\tau$, $i=1,\ldots, n$. It results that moving the threshold over $[Y_{(n,\delta)},+\infty)$, with $Y_{(n,\delta)}=\max_{i=1,\ldots,n} Y_i \delta_i$, has no effect on the NPMLE. In practice the threshold could then be fixed at $Y_{(n,\delta)}$.

An important point in many situations is to evaluate the proportion of cured subjects in the population under study for a given covariate vector $x\in \mathcal S$, i.e., $p_0(x) = \exp(-g(\gamma_0, x) \theta_0)$. The estimator of $p(x)$, within our framework, naturally follows from the plug-in rule :
\begin{align}\label{def:cure_proportion}
\w p(x) = \exp(-g(\w\gamma,x)\w \theta).
\end{align}

\subsection{Link with other estimators}

\subsubsection{Cox and Breslow estimator}

Model $\mathcal P_3$ is aimed to handle the presence of cured subjects in the data whereas the traditional Cox model, $\mathcal P_1$, is not. However, when $g(\gamma,x) = \exp(\gamma^Tx)$, (\ref{NPMLEcoxmodel_gamma_Lambda}) becomes very close to the well-known formulas of the classical Cox and Breslow estimator of $\gamma$ and $\Lambda$, respectively. As a consequence, the derivation of the asymptotics for $(\w \gamma, \w F,\w \theta)$ is somewhat similar as in the case of the Cox and Breslow estimator, provided for instance in \cite{gill1982}. 
An interesting difference with the Cox and Breslow estimator comes from the fact that
 \begin{align*}
\min_{i = 1,\ldots, n} \w  Q_{\w \gamma} (Y_i) \geq  n^{-1}\sum_{i=1}^n g(\gamma,X_i) (1-\Delta_i).
 \end{align*}
From the framework described in the previous section, we deduce that $E[(1-\Delta)|X] > 0$ and $E[g(\gamma,X)(1-\Delta)]>0$, for every $\gamma\in \mathbb R^q$. Consequently, the decreasing function $u\mapsto  E[g(\gamma,X)R(u)] $ is bounded from below. In Lemma \ref{Lemma:probacv}, see the Appendix, this property is shown to hold for $\w Q_\gamma$, uniformly in $\gamma$, with probability going to $1$. This raises a significant difference with respect to classical Cox estimators in which the quantity corresponding to $\w Q_\gamma$ would go to $0$ at infinity. This in turn implies that the weak convergence of the rescaled $\w \Lambda$ will still hold over $\mathbb R_{\geq 0}$. This is in  contrast with the case of the Cox model for which such a convergence holds on bounded intervals. We refer to \cite{gill1982} for a discussion on the study of the Breslow estimator over $[0,+\infty )$.

\subsubsection{Promotion time cure estimator}
The NPMLE for $\mathcal P_2$ is given by \citep{portier+e+v:2017}, 
\begin{align}
\left\{ \begin{array}{l} \displaystyle
\w \beta \in \underset{(\beta_1,\beta_2)\in \mathbb R^d} {\text{argmax}}\ \prod_{i=1}^n\left\{ \left(\frac{\eta(\beta_1+ \beta_2^TX_i)}{\w Q_{2,\beta}(Y_i)-\w \lambda_\beta}\right)^{\delta_i}  \exp( -\w\lambda_\beta ) \right\} \\
\displaystyle \w G (y) =  n^{-1} \sum_{i=1}^n \frac{\delta_i\mathds 1 _{\{Y_i\leq y \}}}{\w Q_{2,\w \beta} (Y_i) -\w \lambda_{\w \beta} }, \end{array} \right. \label{newopti1}
\end{align}
where $\w Q_{2,\beta}(u)=n^{-1}\sum_{i=1}^n\eta(\beta_1+ \beta_2^TX_i) R_i(u)$ and for every $\beta\in \mathbb R^d$, $\w \lambda_{ \beta}$ is the  smallest number verifying $ \sum_{i=1}^n {\delta_i} / (\w Q_{2,\beta} (Y_i) -\w \lambda_{ \beta} )=n$.
Because the function $\beta \mapsto \w \lambda_\beta$ is implicitly defined, it is more difficult to compute the NPMLE of $\mathcal P_2$ through (\ref{newopti1}), than the one of $\mathcal P_3$ through (\ref{NPMLEcoxmodel_gamma_Lambda}) and (\ref{NPMLE_coxmodel_theta_F}). In particular, solving (\ref{newopti1}) requires to run an optimization procedure over $\beta$ for which, at each iteration, we shall evaluate $\w \lambda_\beta$, by an additional procedure. When $\eta= \exp$, it is actually useless to solve (\ref{newopti1}), since it gives the same results as (\ref{NPMLEcoxmodel_gamma_Lambda}) and (\ref{NPMLE_coxmodel_theta_F}). This is the statement of the following proposition. 

\begin{proposition}\label{proposition:p2_p3}
Suppose that $\eta=\exp$ and $g(\gamma,x)=\exp(\gamma^Tx)$ for every $x\in \mathbb R^{d}$. If there exists $i$ such that $\delta_i=1$, then $\w \beta^T = (\log(\w \theta), \w \gamma^T ) $ and $\w G = \w F $.
\end{proposition}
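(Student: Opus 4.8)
The plan is to exploit that, under the stated hypotheses, $\mathcal P_2$ and $\mathcal P_3$ are the same model up to a reparametrization of the intercept, and then to verify that the two explicit formulas (\ref{newopti1}) and (\ref{NPMLEcoxmodel_gamma_Lambda})--(\ref{NPMLE_coxmodel_theta_F}) select the same fit. Indeed, when $\eta=\exp$ and $g(\gamma,x)=\exp(\gamma^Tx)$, the substitution $\theta=\exp(\beta_1)$ and $\gamma=\beta_2$ gives $\exp(\beta_1+\beta_2^Tx)F(t)=\exp(\gamma^Tx)\,\theta F(t)$ for a common cumulative distribution function $F$, so the two likelihoods coincide termwise. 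The real content is therefore that the Lagrange formulation of $\mathcal P_2$ reproduces the closed-form fit of $\mathcal P_3$, and the mechanism I expect to drive this is that the Lagrange multiplier vanishes at the solution.

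Concretely, I would first reduce the $\mathcal P_2$ objective to a profile over the intercept. Writing $a=\exp(\beta_1)$ and noting $\w Q_{2,\beta}(u)=a\,\w Q_{\beta_2}(u)$, the defining equation for $\w\lambda_\beta$ becomes $\sum_i \delta_i/(a\,\w Q_{\beta_2}(Y_i)-\w\lambda_\beta)=n$. Since $\lambda\mapsto \sum_i \delta_i/(a\,\w Q_{\beta_2}(Y_i)-\lambda)$ is strictly increasing on $(-\infty,\min_i a\,\w Q_{\beta_2}(Y_i))$, ranging from $0$ to $+\infty$, the number $\w\lambda_\beta$ is the unique admissible root. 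Comparing with the definition of $\w\theta$ in (\ref{NPMLE_coxmodel_theta_F}), I would observe that $\lambda=0$ solves this equation precisely when $a=n^{-1}\sum_i \delta_i/\w Q_{\beta_2}(Y_i)$; in particular $\w\lambda_{\w\beta}=0$ at the candidate point $\beta=(\log\w\theta,\w\gamma)$, because there $a\,\w Q_{\w\gamma}(Y_i)=\w\theta\,\w Q_{\w\gamma}(Y_i)$ and $\w\theta$ is exactly the required average.

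The crux is to confirm that this vanishing-multiplier configuration is the maximizer in $\beta_1$, not merely a feasible point. For fixed $\beta_2=\gamma$, I would differentiate the logarithm of the $\mathcal P_2$ objective with respect to $a$, using the constraint to supply $d\w\lambda_\beta/da$. The two terms involving $d\w\lambda_\beta/da$ cancel because $\sum_i \delta_i/(a\,\w Q_\gamma(Y_i)-\w\lambda_\beta)=n$, and after rewriting $\w Q_\gamma(Y_i)=a^{-1}(a\,\w Q_\gamma(Y_i)-\w\lambda_\beta)+a^{-1}\w\lambda_\beta$ the derivative collapses to $-n\w\lambda_\beta/a$. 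As $\w\lambda_\beta$ is increasing in $a$ and vanishes at $a=n^{-1}\sum_i\delta_i/\w Q_\gamma(Y_i)$, the profile is strictly increasing before and strictly decreasing after that point, so the maximum over $\beta_1$ is attained exactly at $\w\lambda_\beta=0$, where the objective reduces to $\prod_i\{\exp(\gamma^TX_i)/\w Q_\gamma(Y_i)\}^{\delta_i}$ --- precisely the $\mathcal P_3$ profile objective in (\ref{NPMLEcoxmodel_gamma_Lambda}). This derivative identity is the step I expect to carry the whole argument.

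Maximizing the common profile objective over $\gamma$ then returns $\w\gamma$ and forces the optimal intercept to satisfy $\exp(\w\beta_1)=n^{-1}\sum_i\delta_i/\w Q_{\w\gamma}(Y_i)=\w\theta$, giving $\w\beta^T=(\log\w\theta,\w\gamma^T)$. Substituting $\w\lambda_{\w\beta}=0$ into the formula for $\w G$ in (\ref{newopti1}) turns it into $n^{-1}\sum_i \delta_i\mathds 1_{\{Y_i\leq y\}}/(\w\theta\,\w Q_{\w\gamma}(Y_i))$, which is exactly $\w F(y)$ from (\ref{NPMLE_coxmodel_theta_F}); hence $\w G=\w F$. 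Throughout, the hypothesis that some $\delta_i=1$ guarantees $\sum_i\delta_i\geq 1$, so that $\w\theta>0$, the root $\w\lambda_\beta$ exists in the admissible range, and the objectives are nondegenerate.
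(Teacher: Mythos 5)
Your proof is correct, and it shares the paper's overall skeleton: reparametrize $\beta^T=(\log\theta,\gamma^T)$ so that $\w Q_{2,\beta}=\theta\,\w Q_{\gamma}$, profile out the intercept, show that the optimal intercept is exactly the one at which the Lagrange multiplier vanishes, and then observe that the resulting profile is the $\mathcal P_3$ objective in (\ref{NPMLEcoxmodel_gamma_Lambda}), so that $\w\lambda_{\w\beta}=0$ forces $\w G=\w F$. Where you genuinely differ is the mechanism for that key step. The paper never differentiates in the intercept: it notes that the defining equation of $\w\lambda_{(\log\theta,\gamma)}$ is the first-order condition of the convex map $\lambda\mapsto\sum_{i=1}^n\{-\delta_i\log((\w Q_\gamma(Y_i)\theta-\lambda)/\theta)-\lambda\}$, so $\w\lambda_{(\log\theta,\gamma)}$ is its argmin; evaluating that map at $\lambda=0$ then yields the $\theta$-free upper bound $-\sum_i\delta_i\log \w Q_\gamma(Y_i)$ for the profiled log-likelihood, attained precisely when $\w\lambda_{(\log\theta,\gamma)}=0$. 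You instead differentiate the profile log-likelihood in $a=e^{\beta_1}$, use the constraint to cancel the $d\w\lambda/da$ terms, and arrive at the identity that this derivative equals $-n\w\lambda(a)/a$; combined with monotonicity of $a\mapsto\w\lambda(a)$ this gives strict unimodality. Both routes are sound; the paper's convexity comparison is shorter and requires no smoothness of $a\mapsto\w\lambda(a)$ (which your implicit differentiation tacitly assumes), while your derivative identity buys more, namely that for each fixed $\gamma$ the profile in the intercept is strictly increasing then strictly decreasing, so the optimal $\beta_1$ is unique. One assertion in your write-up deserves a line of justification: that $\w\lambda(a)$ is differentiable and increasing in $a$ follows from the implicit function theorem, since on the admissible branch $\w\lambda'(a)=\{\sum_i\delta_i\w Q_\gamma(Y_i)/(a\w Q_\gamma(Y_i)-\w\lambda)^2\}/\{\sum_i\delta_i/(a\w Q_\gamma(Y_i)-\w\lambda)^2\}>0$; alternatively, you can bypass monotonicity altogether by reading the sign of $\w\lambda(a)$ off its defining equation --- with $a^*=n^{-1}\sum_i\delta_i/\w Q_\gamma(Y_i)$, for $a<a^*$ the left-hand side at $\lambda=0$ exceeds $n$, forcing $\w\lambda(a)<0$, and symmetrically for $a>a^*$ --- which is all that your sign argument for the derivative actually needs.
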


\section{Asymptotics}\label{s3}

The asymptotic analysis of the NPMLE associated to model $\mathcal P_3$ is inspired from the approach developped for the Cox model in \cite{gill1982}. We may first derive the asymptotic behaviour of the $Z$-estimator $\w\gamma$, and then rely on functional Delta-method type arguments, to describe $\w\Lambda$. The monographs of \cite{vandervaart1996} and \cite{kosorok2008} will be of good help at each of these steps to rely on suitable empirical process techniques. The preliminary study of $\w \gamma$ and $\w \Lambda$ (given in sections \ref{sec:3_1}, \ref{sec:3_2} and \ref{sec:3_3}) will provide the basis to describe the behaviour of $\w p(x)$ defined in (\ref{def:cure_proportion}). 

As it is common for $M$-estimators, the asymptotic study of $\w \gamma$ starts with the establishment of its consistency. In contrast, for $\w\Lambda$,  we will rely on the explicit formula (\ref{NPMLEcoxmodel_gamma_Lambda}) to directly show the weak convergence of $n ^{1/2} (\w \Lambda - \Lambda_0)$. 

\subsection{Consistency of $\w\gamma$}\label{sec:3_1}

The estimator $\w \gamma$ is defined as a maximizer in (\ref{NPMLEcoxmodel_gamma_Lambda}). To obtain the consistency of $\w \gamma$, we classically show that (i) the maximum of the limiting function is well identified and that (ii) the convergence to this limiting function is uniform; see \citet[Theorem 2.1]{newey1994} or \citet[Theorem 5.7]{vandervaart1998}. To obtain the identifiability, we need the following assumptions :

\newcounter{saveenum}
\begin{enumerate}[(\text{H}1)]
\item \label{cond:identification1} The variables $T$ and $C$ are independent given $X$. Moreover, $P(C>\tau|X)>0$ a.s., $P(T= +\infty | X)>0$ a.s., and $P(T\in (\tau,+\infty) )  = 0 $.  
\item \label{cond:identification2} For any $\gamma \in \mathbb R^d$, $ \var( g(\gamma_0,X) / g(\gamma,X)) = 0$ implies that $\gamma=  \gamma_0 $. 
\setcounter{saveenum}{\value{enumi}}
\end{enumerate}

The following hypotheses (H\ref{cond:consistency_gamma_0}) and (H\ref{cond:consistency_gamma_1}) help to control the complexity of the underlying class of functions as well as to guarantee the continuity of the function to maximize. Let $|\cdot|_k$ denote the $\ell_k$-norm.

\begin{enumerate}[(H1)]\setcounter{enumi}{\value{saveenum}}
\item \label{cond:consistency_gamma_0} The true value $\gamma_0$ belongs to the interior of a compact set $B\subset \mathbb R^q$.
\item \label{cond:consistency_gamma_1} There exist functions $m_1:\mathcal S\rightarrow \mathbb R_{\geq 0}$ and $M_1:\mathcal S\rightarrow \mathbb R_{\geq 0}$ such that for every $x\in \mathcal S$ and every $\gamma\in B$, we have $0<m_1(x)\leq  g(\gamma, x)\leq M_1(x)$ and $E[|\log(m_1(X))|]$, $E[|\log(M_1(X))|]$ and $E[M_1^2(X)]$ are finite. There exists a function $c_1: \mathcal S \rightarrow \mathbb R_{\geq 0}$ such that for every $x\in\mathcal S$ and every $(\gamma,\tilde \gamma)\in B^2$,
\begin{align}\label{eq:mean_value_th1}
|  g(\gamma,x) -  g(\tilde \gamma,x) | \leq |\gamma - \tilde \gamma |_1 c_1(x),
\end{align}
with $0< E [c_1^2(X)]<+\infty $. 
\setcounter{saveenum}{\value{enumi}}
\end{enumerate}

\begin{proposition}\label{propositionconsistency}
Under (H\ref{cond:identification1})--(H\ref{cond:consistency_gamma_1}), we have that $\w\gamma \overset{\mathbb P}{\lr} \gamma_0$.
\end{proposition}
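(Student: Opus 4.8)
The plan is to follow the standard two-part recipe for consistency of an $M$-estimator, as cited via \citet[Theorem 5.7]{vandervaart1998}: first identify the limiting objective function and show its maximizer is $\gamma_0$ and well-separated, and second establish uniform convergence of the (normalized, log) objective over the compact set $B$. Writing the profiled objective from (\ref{NPMLEcoxmodel_gamma_Lambda}) as
\begin{align*}
\Psi_n(\gamma) = n^{-1}\sum_{i=1}^n \delta_i \big[\log g(\gamma,X_i) - \log \w Q_\gamma(Y_i)\big],
\end{align*}
I would first compute its pointwise limit. Using the martingale formula (\ref{formula:martingale}) with $h(u,x)=\log g(\gamma,x)$ to handle the first term and with the substitution $\w Q_\gamma(u)\to Q_\gamma(u):=E[g(\gamma,X)R(u)]$ for the second, the expectation of the first summand is $\int E[\log g(\gamma,X)\, g(\gamma_0,X)R(u)]\,\theta_0\,dF_0(u)$, and similarly for the $\log\w Q_\gamma$ term. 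The candidate limit is thus $\Psi(\gamma)=\int E\big[\log\{g(\gamma,X)/Q_\gamma(u)\}\, g(\gamma_0,X)R(u)\big]\,\theta_0\,dF_0(u)$.

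For identifiability, I would show $\Psi(\gamma)-\Psi(\gamma_0)\le 0$ with equality iff $\gamma=\gamma_0$. The natural route is a Kullback--Leibler / Jensen argument: the difference should reduce to an expression of the form $\int E\big[g(\gamma_0,X)R(u)\,\log\{g(\gamma,X)Q_{\gamma_0}(u)/(g(\gamma_0,X)Q_\gamma(u))\}\big]\,\theta_0\,dF_0(u)$, and applying the inequality $\log z\le z-1$ (or treating this as the negative KL divergence between two compensator-type densities) gives nonpositivity, using $\int Q_\gamma(u)\,\theta_0\,dF_0(u)$-type normalizations. Equality in Jensen then forces $g(\gamma,X)/g(\gamma_0,X)$ to be (a.s.) constant across $X$, i.e.\ $\var(g(\gamma_0,X)/g(\gamma,X))=0$, which by (H\ref{cond:identification2}) yields $\gamma=\gamma_0$. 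Assumption (H\ref{cond:identification1}) guarantees the relevant quantities are well defined and that $Q_\gamma$ stays bounded away from $0$ (via the cure mass $E[(1-\Delta)\,|\,X]>0$, as noted before Lemma~\ref{Lemma:probacv}), so the logarithms cause no trouble at infinity.

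For the uniform convergence $\sup_{\gamma\in B}|\Psi_n(\gamma)-\Psi(\gamma)|\overset{\mathbb P}{\to}0$, I would invoke a uniform law of large numbers via Glivenko--Cantelli arguments (as in \cite{vandervaart1996,kosorok2008}). The key inputs are the envelope and Lipschitz bounds from (H\ref{cond:consistency_gamma_1}): the domination $m_1(x)\le g(\gamma,x)\le M_1(x)$ with the stated integrability of $\log m_1$, $\log M_1$ and $M_1^2$ controls the $\log g$ term and, crucially, controls $\log\w Q_\gamma$ from below by ensuring $\w Q_\gamma$ is uniformly bounded away from zero (this is exactly Lemma~\ref{Lemma:probacv}), while the Lipschitz bound (\ref{eq:mean_value_th1}) with $E[c_1^2(X)]<\infty$ makes the class $\{(y,x)\mapsto \text{summand}\}$ indexed by $\gamma\in B$ a Glivenko--Cantelli class (e.g.\ via bracketing numbers for Lipschitz-in-parameter families over a compact set). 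Compactness of $B$ from (H\ref{cond:consistency_gamma_0}) then upgrades pointwise to uniform convergence, and continuity of $\Psi$ follows from dominated convergence. Combining the well-separated maximum with uniform convergence delivers $\w\gamma\overset{\mathbb P}{\to}\gamma_0$.

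I expect the main obstacle to be the uniform control of the $\log\w Q_\gamma$ term. Unlike the $\log g$ part, this is a ratio/logarithm of an empirical average, so one must simultaneously show (a) that $\w Q_\gamma(Y_i)$ is bounded away from zero uniformly in $\gamma\in B$ with probability tending to one --- precisely the role of Lemma~\ref{Lemma:probacv}, which exploits the positive cure mass that keeps $R_i$ activated at infinity --- and (b) that $\sup_\gamma |\w Q_\gamma - Q_\gamma|\to 0$ uniformly, after which the continuity of $\log$ on a compact interval bounded away from zero transfers uniform convergence to the composed $\log\w Q_\gamma$ term. The remaining steps (the Jensen/KL identifiability computation and the bracketing bound) are routine given assumptions (H\ref{cond:identification1})--(H\ref{cond:consistency_gamma_1}).
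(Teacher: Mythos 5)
Your proposal is correct and follows essentially the same route as the paper: the same two-part M-estimator recipe, the same limiting criterion $E[\delta\log(g(\gamma,X)/Q_\gamma(Y))]$ obtained via the martingale formula, the same Jensen/Kullback--Leibler identifiability argument (the paper just uses Murphy's quantitative refinement $\log x - (x-1)\le -\ell(x)$ in place of plain $\log z \le z-1$, with equality again forcing $g(\gamma,X)/g(\gamma_0,X)$ constant and then (H2)), and the same treatment of the $\log\w Q_\gamma$ term by combining the uniform lower bound and uniform convergence of $\w Q_\gamma$ from Lemma~\ref{Lemma:probacv} with Glivenko--Cantelli classes and the mean-value theorem for $\log$. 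No substantive gap.
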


We now discuss assumption (H\ref{cond:identification2}) by considering some examples.

\begin{example}[Cox with cure]
When $g (\gamma,x)=\exp(\gamma^Tx)$, (H\ref{cond:identification2}) is equivalent to the statement that $ \var(X)$ has full rank.
\end{example}
 
Without specifying $g (\gamma,x)=\exp(\gamma^Tx)$, identifiability might not hold. Indeed, consider the case where $g (\gamma,x)=|\gamma^Tx|$, then of course different pairs $(\theta,\gamma)$ could lead to the same function $x\mapsto \theta | \gamma^Tx|$. A possibility when facing such difficulties is to restrict $\gamma$ to the unit sphere in $\R^{d}$. Then identifiability might be recovered. We refer to this model as a directional model.
 
\begin{example}[directional model]\label{ex:directionalmodel}
Suppose that $g (\gamma,x)=\eta(\gamma^Tx)$ with $|\gamma |_2=1$, $\gamma_1>0$. One can typically think of functions of the form $g (\gamma,x)= |\gamma^Tx|^k$, for some $k\geq 1$. Such models allow for a geometric interpretation in the same vein as the single-index models \citep{hardle2011}. The information available from the covariates $X$ to predict $Y$ is contained in the linear transformation $P_{\gamma}X$, where $P_\gamma$ stands for the orthogonal projector on $\text{span}(\gamma)$. 
%Suppose there are two functions $f$ and $g$ such that $f(\gamma^T x) =h(\tilde \gamma^Tx)$, for any $x\in\mathbb R^d$. With $\ell$ iterations, we obtain
%\begin{align*}
%f(\gamma^T  x) = f(\gamma^T P_\gamma x) = h(\tilde \gamma^T P_\gamma x) = h(\tilde \gamma^TP_{\tilde \gamma} P_\gamma x) =f( \gamma^TP_{\tilde \gamma} P_\gamma x)  =\ldots  =f( \gamma^T(P_{\tilde \gamma} P_\gamma)^\ell x) .
%\end{align*}
%If then $\gamma\neq \tilde \gamma$, we have that for every $x\in \mathbb R^d$, $ (P_{\tilde \gamma} P_\gamma)^{\ell} x$ converges to $0$, as $\ell $ goes to infinity. Applying this with $f(\gamma^T x) = \eta(\gamma^Tx)\theta$ and $ h(\tilde \gamma^T x) =\eta(\tilde \gamma^Tx)\tilde \theta $, makes the function $\eta$ constant on $\gamma^Tx$ and $\tilde \gamma^Tx$ as $x$ varies. Hence imposing that $\eta$ is nonconstant and that $\mbox{Var}(X)$ has full rank is enough to ensure identifiability. 
For more details about identifiability of single-index models, we refer to Theorem 1 in \cite{lin+k:2007} as well as Theorem 1 in \cite{portier+d:2013} where $X$ is required to possess a density.
\end{example}

If $|\gamma|_2=1$ does not hold, then identifiability could fail unless more specific forms are considered for $\eta$. An example where identifiability is still satisfied is given below.

%In this case, we have the following parametrization : $g(\gamma,x) =\theta\eta(a \gamma^Tx) $, with $|\gamma |_2=1$, $\gamma_1>0$, $\theta \in \mathbb R_{\geq 0}$, $a\in \mathbb R_{\geq 0}$.
%Following the same arguments as developed in the previous example, we obtain that $\gamma $ is still identifiable under mild conditions. For $(\theta, a)$ to be identifiable, one should verify in addition that if  for almost every $x\in \mathcal S$, $ \theta \eta(a \gamma^Tx)=\tilde \theta\eta(\tilde a \gamma^Tx)$, then
% $(\theta, a) = (\tilde \theta, \tilde a)$. 
% 

\begin{example}[Modified Cox]
An interesting choice is when $g (\gamma,x)=\exp(\rho_k(\gamma^T x))$, where $\rho_k(t) = \sign (t) |t|^k$,  for $k > 1$.  In the following lines, we obtain (H\ref{cond:identification2}) under the assumption that $X$ has a continuous density and  $B(0,r)$ is included in the support of $X$. Suppose that
$\rho_k (\gamma_0^Tx) - \rho_k (\gamma^Tx)$ is constant for almost every $x\in B(0,r)$. Suppose that $\gamma $ and $\gamma_0$ are linearly independent. Then, take $\alpha \in B(0,r)$ such that $\alpha^T\gamma = 0 \neq \alpha^T\gamma_0$. Let $g(x) = \rho_k(\gamma_0^Tx)- \rho_k(\gamma^Tx)$ and $K$ be a probability density function. For any $s \in (0,1)$ we have, using approximation theory, that $(g \star  K_h ) (s \alpha ) \to s^k \rho_k (\gamma_0^T\alpha) $   as  $h\to 0 $, where $K_h(\cdot) = K(\cdot/h)/h^d$. Hence for any $s\in  (0,1)$, $s^k \rho_k ( \gamma_0^T \alpha)$ is constant which is impossible. Supposing that $\gamma $ and $\gamma_0$ are linearly dependent, we directly obtain that  $\gamma =\gamma_0$.
\end{example}

\subsection{Asymptotic normality of $\w \gamma$}\label{sec:3_2}

We now introduce some notations that will be useful to express the asymptotic normality results. For every $y\in \mathbb R_{\geq 0}$, $\gamma\in  \mathbb R ^d$, $Q_{\gamma}(y) = E[g(\gamma,X)R(y)]$,
$d_\gamma (x)=  { \nabla_{\gamma} g(\gamma,x)}/{ g(\gamma,x)}$, $ h_\gamma(y) = 
{\nabla_\gamma  Q_\gamma(y)}/{ Q_\gamma(y)} $.
We define
\begin{align}
\label{eq:variance_I0} &I_0= \int  E \left[ \{d_0 (X) -  h_0(u)\} \{ d_0 (X)-h_0(u) \}^T g(\gamma_0,X)  R(u)\right] d\Lambda_0(u),
\end{align}
where $d_0 = d_{\gamma_0}$ and $h_0 = h_{\gamma_0}$.
%where $\nabla_{\gamma} g(\gamma,x)$ (resp. $\nabla_\gamma  Q_\gamma(y)$) is the vector in $\mathbb R^q$ consisting of the partial derivatives of $g(\gamma,x)$ (resp. $Q_\gamma(y)$) evaluated at $(\gamma,x)$ (resp. $(\gamma,y)$). 
We require the following assumptions to obtain an asymptotic decomposition for $\w \gamma$.
\begin{enumerate}[(H1)]\setcounter{enumi}{\value{saveenum}}
\item \label{cond:asymptotics_gamma_1} The matrix $I_0$ has full rank.
\item \label{cond:asymptotics_gamma_2} For every $x\in \mathcal S$, $\gamma\mapsto g(\gamma, x)$ is differentiable and there exists a function $c_2: \mathcal S \rightarrow \mathbb R_{\geq 0}$ such that for every $x\in\mathcal S$ and every $(\gamma,\tilde \gamma)\in B^2$,
\begin{align}\label{eq:mean_value_th2}
|  \nabla_\gamma g(\gamma,x) -  \nabla_\gamma  g(\tilde \gamma,x) |_1 \leq |\gamma - \tilde \gamma |_1 c_2(x),
\end{align}
with $ 0< E [c_2^2(X)]<+\infty $. Moreover there exists a function $M_2:\mathcal S\rightarrow \mathbb R_{\geq 0}$ such that, for every $x\in\mathcal S$, $|\nabla_\gamma g(\gamma,x)|_1 <  M_2(x)$ where  $E[M_2(X)]$, $E[ M_2^2(X) / m_1(X)  ]$, $ E[ (c_2(X)+M_2(X) )^2 M_1(X)/m_1^2(X)   ]$, and
 $ E[ M_2^2(X) (c_1(X)+ M_1(X))^2M_1(X)/m_1^4(X)   ]$ are finite.
 \setcounter{saveenum}{\value{enumi}}
\end{enumerate}

\begin{proposition}\label{proposition:weak_cv_gamma}
Under (H\ref{cond:identification1})--(H\ref{cond:asymptotics_gamma_2}), we have that
\begin{align}\label{result1}
n^{1/2} (\w\gamma -\gamma_0) =n^{-1/2} I_0^{-1}\sum_{i=1} ^n \int (d_0(X_i) -  h_0(u))dM_i(u) +o_{\mathbb P}(1),
\end{align}
and in particular, using Lemma \ref{lemma:weakcv}, see the Appendix, combined with (\ref{formula:quadratic_variation}), it holds that $n^{1/2} (\w\gamma -\gamma_0) \overset{\text{d}}{\lr}  \mathcal N (0, I_0^{-1})$.
\end{proposition}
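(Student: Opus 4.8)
The plan is to treat $\w\gamma$ as a standard $Z$-estimator and run the usual three-step argument: extract a score equation, linearize it around $\gamma_0$, and analyse separately the score at $\gamma_0$ and the Hessian. Since $\gamma_0$ lies in the interior of $B$ by (H\ref{cond:consistency_gamma_0}) and $\gamma\mapsto g(\gamma,x)$ is differentiable by (H\ref{cond:asymptotics_gamma_2}), on the event (of probability tending to one, by Proposition \ref{propositionconsistency}) that $\w\gamma$ is interior, the maximizer in (\ref{NPMLEcoxmodel_gamma_Lambda}) solves the first-order condition $\Psi_n(\w\gamma)=0$, where, writing $\w h_\gamma(y)=\nabla_\gamma \w Q_\gamma(y)/\w Q_\gamma(y)$,
\begin{align*}
\Psi_n(\gamma)=n^{-1}\sum_{i=1}^n \delta_i\big\{ d_\gamma(X_i)-\w h_\gamma(Y_i)\big\}.
\end{align*}
A mean-value expansion gives $0=\Psi_n(\gamma_0)+\dot\Psi_n(\bar\gamma)(\w\gamma-\gamma_0)$ for some $\bar\gamma$ between $\w\gamma$ and $\gamma_0$; using the consistency $\w\gamma\overset{\mathbb P}{\lr}\gamma_0$, it then suffices to show that $-\dot\Psi_n(\bar\gamma)\overset{\mathbb P}{\lr}I_0$ and that $n^{1/2}\Psi_n(\gamma_0)$ admits the martingale representation stated in (\ref{result1}) with limiting variance $I_0$.

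The core is the representation of the score at $\gamma_0$ (write $\w h_0=\w h_{\gamma_0}$). First I would use that $N_i$ jumps by $\delta_i$ at $Y_i$ to write $n^{1/2}\Psi_n(\gamma_0)=n^{-1/2}\sum_i\int\{d_0(X_i)-\w h_0(u)\}\,dN_i(u)$, and then substitute the compensator via $dN_i(u)=dM_i(u)+g(\gamma_0,X_i)R_i(u)\,d\Lambda_0(u)$. The resulting drift term vanishes \emph{exactly}, because for each fixed $u$,
\begin{align*}
n^{-1}\sum_{i=1}^n\{d_0(X_i)-\w h_0(u)\}g(\gamma_0,X_i)R_i(u)=\nabla_\gamma\w Q_{\gamma_0}(u)-\w h_0(u)\,\w Q_{\gamma_0}(u)=0,
\end{align*}
directly from the definitions of $d_0$, $\w h_0$ and $\w Q_{\gamma_0}$, so it contributes nothing after integration against $d\Lambda_0$. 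This leaves $n^{-1/2}\sum_i\int\{d_0(X_i)-\w h_0(u)\}\,dM_i(u)$, and the remaining task is to replace the empirical $\w h_0$ by its population limit $h_0$.

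I expect the real work to lie in proving
\begin{align*}
n^{-1/2}\sum_{i=1}^n\int\{\w h_0(u)-h_0(u)\}\,dM_i(u)=o_{\mathbb P}(1).
\end{align*}
The idea is to show that $\w h_0$ is uniformly $O_{\mathbb P}(n^{-1/2})$-close to $h_0$ — which follows from a uniform law of large numbers for $\w Q_{\gamma_0}$ and $\nabla_\gamma\w Q_{\gamma_0}$ together with the uniform lower bound on $\w Q_\gamma$ supplied by Lemma \ref{Lemma:probacv} (this lower bound is precisely what keeps every ratio from degenerating over the whole half-line $\R_{\geq 0}$) — and then to invoke an asymptotic equicontinuity argument for the empirical martingale process $\phi\mapsto n^{-1/2}\sum_i\int\phi\,dM_i$ indexed by a Donsker class containing the shrinking increments $\w h_0-h_0$. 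The moment and Lipschitz conditions in (H\ref{cond:consistency_gamma_1}) and (H\ref{cond:asymptotics_gamma_2}) are calibrated so that the relevant classes are Donsker with square-integrable envelopes; the delicate point is that $\w h_0$ is data-dependent, so one cannot apply the quadratic-variation identity directly and must instead control the process uniformly over the class.

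It remains to handle the Hessian and assemble the pieces. Differentiating $\Psi_n$ once more and using (H\ref{cond:asymptotics_gamma_2}) (Lipschitz gradients, the envelopes $M_2$, $c_2$, and the lower bound $m_1$), a uniform law of large numbers over $B$ gives $-\dot\Psi_n(\gamma)\lr I_0$ uniformly, hence $-\dot\Psi_n(\bar\gamma)\overset{\mathbb P}{\lr}I_0$ by consistency of $\bar\gamma$, with invertibility of the limit guaranteed by (H\ref{cond:asymptotics_gamma_1}); this yields (\ref{result1}). Finally, the summands $\int\{d_0(X_i)-h_0(u)\}\,dM_i(u)$ are i.i.d.\ and mean zero, and, since $d_0(X)$ is a function of $X$ and thus a valid predictable integrand with respect to $\mathcal F_y$, the vector version of the quadratic-variation identity (\ref{formula:quadratic_variation}) gives their common covariance as $\int E[\{d_0(X)-h_0(u)\}\{d_0(X)-h_0(u)\}^T g(\gamma_0,X)R(u)]\,d\Lambda_0(u)=I_0$, exactly (\ref{eq:variance_I0}). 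The martingale central limit theorem (Lemma \ref{lemma:weakcv}) then delivers $n^{1/2}\Psi_n(\gamma_0)\overset{\text{d}}{\lr}\mathcal N(0,I_0)$, and the sandwich $I_0^{-1}I_0 I_0^{-1}=I_0^{-1}$ produces the announced $\mathcal N(0,I_0^{-1})$ limit.
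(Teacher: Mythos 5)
Your overall architecture (score equation, exact cancellation of the compensator at $\gamma_0$, an equicontinuity argument to replace $\w h_0$ by $h_0$, then the CLT with variance $I_0$) is the same as the paper's, and your treatment of $n^{1/2}\Psi_n(\gamma_0)$ — including the observation that the drift term vanishes \emph{identically} by the definitions of $d_0$, $\w h_0$ and $\w Q_{\gamma_0}$ — is precisely the paper's statement (\ref{statement2prime}). The genuine gap is in your linearization step. You expand $0=\Psi_n(\gamma_0)+\dot\Psi_n(\bar\gamma)(\w\gamma-\gamma_0)$, which presupposes that $\gamma\mapsto\Psi_n(\gamma)$ is differentiable. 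But $\Psi_n$ is built from $d_\gamma(X_i)=\nabla_\gamma g(\gamma,X_i)/g(\gamma,X_i)$ and $\w h_\gamma(Y_i)=\nabla_\gamma \w Q_\gamma(Y_i)/\w Q_\gamma(Y_i)$, so $\dot\Psi_n$ involves \emph{second} derivatives of $g$ with respect to $\gamma$. Hypothesis (H\ref{cond:asymptotics_gamma_2}) only grants existence of $\nabla_\gamma g$ together with its Lipschitz continuity (\ref{eq:mean_value_th2}); twice differentiability is never assumed, so $\dot\Psi_n(\bar\gamma)$ need not exist and the mean-value expansion is unavailable. Moreover, even granting smoothness, your claim that a uniform law of large numbers gives $-\dot\Psi_n(\gamma)\to I_0$ uniformly on $B$ is not correct as stated: the limit of $-\dot\Psi_n(\gamma)$ is $\gamma$-dependent and contains second-derivative terms, and identifying its value at $\gamma_0$ with $I_0$ (which contains only first derivatives) is an information-equality argument you would still have to supply.

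The paper's proof is organized precisely to avoid differentiating the score. Using the identity (\ref{eq:importantidentity}) and $dN_i=dM_i+g(\gamma_0,X_i)R_i\,d\Lambda_0$, it writes, for every $\gamma\in B$,
\begin{align*}
S_n(\gamma)-S_n(\gamma_0)=n^{-1}\sum_{i=1}^n\int\{d_\gamma(X_i)-\w h_\gamma(u)\}\{g(\gamma_0,X_i)-g(\gamma,X_i)\}R_i(u)\,d\Lambda_0(u)+r_{1,n}(\gamma),
\end{align*}
with $r_{1,n}(\gamma)=n^{-1}\sum_{i=1}^n\int\{d_\gamma(X_i)-d_0(X_i)+\w h_0(u)-\w h_\gamma(u)\}\,dM_i(u)$. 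Setting $\gamma=\w\gamma$, the mean-value theorem is applied only in the argument sitting inside $g(\cdot,X_i)$, with the factor $d_{\w\gamma}(X_i)-\w h_{\w\gamma}(u)$ frozen at $\w\gamma$; this uses nothing beyond first-order differentiability of $g$ and produces the matrix $H_n(\tilde\gamma)$, which converges to $I_0$ by Glivenko--Cantelli arguments (statement (\ref{statement2})). The $\gamma$-variation of $d_\gamma$ and $\w h_\gamma$ — exactly what your Taylor expansion would need to differentiate — is instead swept into the remainder $r_{1,n}(\w\gamma)$ and shown to be $o_{\mathbb P}(n^{-1/2})$ by the asymptotic-equicontinuity results (Lemmas \ref{Lemma:equicontinuity1} and \ref{Lemma:equicontinuity2}), which only require the Lipschitz conditions of (H\ref{cond:consistency_gamma_1}) and (H\ref{cond:asymptotics_gamma_2}). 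To repair your argument you would either have to strengthen the hypotheses (twice differentiability of $g$ with suitable moment bounds, plus a proof of the information identity at $\gamma_0$), or restructure the linearization along the paper's lines.
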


\subsection{Weak convergence of $\w\Lambda$}\label{sec:3_3}

Based on the decomposition obtained for $\w \gamma$, we can now obtain a uniform representation of the process $\{n^{1/2} (\w \Lambda(y) - \Lambda(y)) : y\in\mathbb R_{\geq 0}\}$. This is the statement of the next Proposition. 

\begin{proposition}\label{proposition:strong_decomp_Lambda}
Under (H\ref{cond:identification1})--(H\ref{cond:asymptotics_gamma_2}), we have that
\begin{align}\label{result2}
\sup_{y\in \mathbb R_{\geq 0}} \left| n^{1/2}(\w \Lambda(y)-\Lambda_0(y)) - \left\{  n^{-1/2} \sum_{i=1}^n \int_0^y \frac{dM_i(u)}{  Q_{0}(u)} -   \int_0^y h_0(u)^T d\Lambda_0(u) ( n^{1/2}  (\w \gamma-\gamma_0)) \right\}  \right| = o_{\mathbb P}(1).									
\end{align}
In particular, using Lemma \ref{lemma:weakcv}, the two terms involved in the decomposition are asymptotically independent and $ n^{1/2} (\w \Lambda-\Lambda_0)$ converges weakly to a tight centered Gaussian process in $\ell^\infty(\R_{\geq 0})$ with covariance process given by
\begin{align*}
(y,y')\mapsto \int_0^{\min(y,y')}\frac{d\Lambda_0 (u)}{  Q_{0}(u)} + \left(\int_0^y h_0(u)^Td\Lambda_0(u)\right)  I_0^{-1}\left( \int_0^{y'} h_0(u)d\Lambda_0(u)\right).
\end{align*}
\end{proposition}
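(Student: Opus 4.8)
The plan is to start from the closed-form expression for $\w\Lambda$ in (\ref{NPMLEcoxmodel_gamma_Lambda}) and turn it into an exact finite-sample decomposition using the martingale structure of the counting processes. Writing $\bar N_n = n^{-1}\sum_i N_i$, the formula reads $\w\Lambda(y)=\int_0^y \w Q_{\w\gamma}(u)^{-1}\, d\bar N_n(u)$, and substituting $dN_i(u)=dM_i(u)+g(\gamma_0,X_i)R_i(u)\,d\Lambda_0(u)$ from the definition of $M_i$ yields
\begin{align*}
\w\Lambda(y)-\Lambda_0(y) = \int_0^y \frac{d\bar M_n(u)}{\w Q_{\w\gamma}(u)} + \int_0^y \left(\frac{\w Q_{\gamma_0}(u)}{\w Q_{\w\gamma}(u)}-1\right) d\Lambda_0(u),
\end{align*}
with $\bar M_n=n^{-1}\sum_i M_i$. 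This is an identity; the whole proof consists in analysing the two terms after multiplication by $n^{1/2}$.

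For the martingale term I would replace $\w Q_{\w\gamma}$ by its population limit $Q_0$. The increment $n^{-1/2}\sum_i\int_0^y\big(\w Q_{\w\gamma}(u)^{-1}-Q_0(u)^{-1}\big)dM_i(u)$ must be shown to be $o_{\mathbb P}(1)$ uniformly in $y\in\R_{\geq 0}$; here I would combine a uniform law of large numbers for $\w Q_\gamma$ over $B\times\R_{\geq 0}$, the uniform lower bound on $\w Q_{\w\gamma}$ furnished by Lemma \ref{Lemma:probacv}, the consistency $\w\gamma\to\gamma_0$ of Proposition \ref{propositionconsistency}, and tightness of the empirical martingale process to control the stochastic integral against a vanishing (random) integrand. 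This leaves the leading term $n^{-1/2}\sum_i\int_0^y dM_i(u)/Q_0(u)$. For the drift term I would write $\w Q_{\gamma_0}(u)/\w Q_{\w\gamma}(u)-1=(\w Q_{\gamma_0}(u)-\w Q_{\w\gamma}(u))/\w Q_{\w\gamma}(u)$ and Taylor-expand $\w Q_\gamma(u)$ in $\gamma$ around $\gamma_0$. Using that $\nabla_\gamma\w Q_{\gamma_0}/\w Q_{\gamma_0}\to h_0$ uniformly and that $n^{1/2}(\w\gamma-\gamma_0)=O_{\mathbb P}(1)$ by Proposition \ref{proposition:weak_cv_gamma}, the linear part produces $-\big(\int_0^y h_0(u)^T d\Lambda_0(u)\big)\,n^{1/2}(\w\gamma-\gamma_0)$, while the quadratic remainder is $O_{\mathbb P}(n^{1/2}|\w\gamma-\gamma_0|^2)=o_{\mathbb P}(1)$, the integrability assumptions in (H\ref{cond:consistency_gamma_1})--(H\ref{cond:asymptotics_gamma_2}) and $\Lambda_0(+\infty)=\theta_0<\infty$ being what lets me bound the remainder uniformly over the whole half-line. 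Assembling the two pieces gives the representation (\ref{result2}).

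For the weak convergence I would feed the i.i.d. decomposition into Lemma \ref{lemma:weakcv}. The first summand $n^{-1/2}\sum_i\int_0^\cdot dM_i/Q_0$ converges to a tight centered Gaussian process in $\ell^\infty(\R_{\geq 0})$; its covariance $\int_0^{\min(y,y')}d\Lambda_0(u)/Q_0(u)$ follows from (\ref{formula:quadratic_variation}), and finiteness over all of $\R_{\geq 0}$ is exactly where the lower bound on $Q_0$ (the population analogue of Lemma \ref{Lemma:probacv}) is used, in contrast with the classical Breslow case. The second summand is the deterministic map $y\mapsto-\int_0^y h_0^T d\Lambda_0$ applied to $n^{1/2}(\w\gamma-\gamma_0)\overset{d}{\to}\mathcal N(0,I_0^{-1})$. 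The crucial computation is the asymptotic independence of the two summands: their cross-covariance, via the predictable-covariation form of (\ref{formula:quadratic_variation}), is $\int_0^y Q_0(u)^{-1}E\big[(d_0(X)-h_0(u))g(\gamma_0,X)R(u)\big]d\Lambda_0(u)$, and since $\nabla_\gamma Q_{\gamma_0}(u)=E[d_0(X)g(\gamma_0,X)R(u)]=h_0(u)Q_0(u)$ the integrand vanishes identically. Hence the limiting covariance is the sum of the two contributions, yielding the stated formula.

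The main obstacle I expect is the uniform control of the martingale integral with the random, non-predictable integrand $1/\w Q_{\w\gamma}$ over the unbounded time axis: one must simultaneously handle the dependence of $\w Q_{\w\gamma}$ on all the data and the lack of a natural terminal time, which is precisely what the uniform lower bound of Lemma \ref{Lemma:probacv} is designed to overcome. The remaining steps are essentially linearisation and covariance bookkeeping.
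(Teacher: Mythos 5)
Your proposal is correct and follows essentially the same route as the paper: the same exact martingale-plus-drift decomposition of $n^{1/2}(\w \Lambda-\Lambda_0)$, the same replacement of $\w Q_{\w\gamma}^{-1}$ by $Q_0^{-1}$ in the stochastic integral via the uniform lower bound and asymptotic equicontinuity (the paper's statement (B.5), proved through Lemma \ref{Lemma:equicontinuity2}), and the same linearisation of the drift term (the paper uses the exact mean-value form with $\nabla_\gamma \w Q_{\tilde\gamma}$ where you Taylor-expand and control the quadratic remainder by the Lipschitz condition on $\nabla_\gamma g$, which is an equivalent use of (H\ref{cond:asymptotics_gamma_2})). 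Your explicit computation that the cross-covariance vanishes, via $E[d_0(X)g(\gamma_0,X)R(u)]=h_0(u)Q_0(u)$, is precisely the content of Lemma \ref{lemma:weakcv}, which the paper invokes for the asymptotic independence.
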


The two previous propositions, Proposition \ref{proposition:weak_cv_gamma} and \ref{proposition:strong_decomp_Lambda}, form the basis of the next analysis, which ultimately describes the estimator $\w p(x)$ of the cure proportion $p(x)$. The following results will be obtained as (almost direct) consequences of Propositions \ref{proposition:weak_cv_gamma} and \ref{proposition:strong_decomp_Lambda} and so are referred to as corollaries.

\subsection{Asymptotic normality of $\w \theta$} 
Since $\w\theta=\lim_{y\r +\infty} \w\Lambda(y) = \w\Lambda(\tau)$ and $\theta_0 = \Lambda_0(\tau)$, the weak convergence of $  n^{1/2} (\w \theta-\theta_0)$ is deduced from the weak convergence of $ n^{1/2} (\w \Lambda-\Lambda_0)$ as the finite dimensional laws converge in distribution. The expression for the asymptotic variance is deduced from the one given in Proposition \ref{proposition:strong_decomp_Lambda}.

\begin{corollary}
Under (H\ref{cond:identification1})--(H\ref{cond:asymptotics_gamma_2}),  $n^{1/2} (\w \theta-\theta_0)$ converges in distribution to a centered Gaussian distribution with variance
\begin{align}\label{express:asym_variance_theta}
v_\theta = \int \frac{d\Lambda_0(u)}{Q_0(u)} +\left( \int h_0(u)^T d\Lambda_0(u) \right)I_0^{-1}\left(\int h_0(u) d\Lambda_0(u)\right).
\end{align}
\end{corollary}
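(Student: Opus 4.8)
The plan is to obtain the result as a direct corollary of Proposition \ref{proposition:strong_decomp_Lambda} by evaluating the asymptotic process at the single point $y=\tau$ and identifying its variance. First I would record the two deterministic identities $\w\theta = \w\Lambda(\tau)$ and $\theta_0 = \Lambda_0(\tau)$, which follow because $\Lambda = \theta F$ with $F(\tau)=1$ under (H\ref{cond:identification1}) (recall that $P(T\in(\tau,+\infty))=0$, so $F$ puts all its mass on $[0,\tau]$ and $\Lambda(y)$ is constant equal to $\theta$ for $y\geq\tau$). These identities reduce the problem to the asymptotics of $n^{1/2}(\w\Lambda(\tau)-\Lambda_0(\tau))$, which is already controlled by the uniform expansion (\ref{result2}).

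Next I would apply Proposition \ref{proposition:strong_decomp_Lambda} at $y=\tau$. Since the supremum over all $y\in\mathbb R_{\geq 0}$ of the remainder is $o_{\mathbb P}(1)$, in particular the remainder at $y=\tau$ is $o_{\mathbb P}(1)$, and hence
\begin{align*}
n^{1/2}(\w\theta-\theta_0) = n^{-1/2}\sum_{i=1}^n \int_0^\tau \frac{dM_i(u)}{Q_0(u)} - \left(\int_0^\tau h_0(u)^T d\Lambda_0(u)\right) n^{1/2}(\w\gamma-\gamma_0) + o_{\mathbb P}(1).
\end{align*}
Because $\Lambda_0$ is supported on $[0,\tau]$, the integrals $\int_0^\tau$ coincide with the unrestricted integrals $\int$ appearing in (\ref{express:asym_variance_theta}), so I may drop the upper limit $\tau$ freely. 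The weak convergence of the finite-dimensional laws of $n^{1/2}(\w\Lambda-\Lambda_0)$ guaranteed by Proposition \ref{proposition:strong_decomp_Lambda} then yields that $n^{1/2}(\w\theta-\theta_0)$ is asymptotically centered Gaussian.

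It remains to identify the variance, for which I would simply read off the covariance process of Proposition \ref{proposition:strong_decomp_Lambda} at $(y,y')=(\tau,\tau)$. The first term becomes $\int_0^{\min(\tau,\tau)} d\Lambda_0(u)/Q_0(u) = \int d\Lambda_0(u)/Q_0(u)$, and the second term becomes the quadratic form $\left(\int h_0(u)^T d\Lambda_0(u)\right) I_0^{-1}\left(\int h_0(u) d\Lambda_0(u)\right)$; their sum is exactly $v_\theta$ as in (\ref{express:asym_variance_theta}). This matches the intuition from the expansion above: the martingale term contributes $\int d\Lambda_0/Q_0$ via the quadratic-variation formula (\ref{formula:quadratic_variation}) with $h=\tilde h = Q_0^{-1}\mathds 1_{[0,\tau]}$, while the $\w\gamma$ term contributes the sandwich form through $\var(n^{1/2}(\w\gamma-\gamma_0))\to I_0^{-1}$ from Proposition \ref{proposition:weak_cv_gamma}, and the asymptotic independence noted in Proposition \ref{proposition:strong_decomp_Lambda} rules out any cross term.

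Since all the heavy lifting — the uniform expansion, the tightness, the Gaussian limit, and the asymptotic independence of the two components — is already established in Proposition \ref{proposition:strong_decomp_Lambda}, there is no genuine obstacle here; the only point requiring care is the bookkeeping that justifies $\w\theta=\w\Lambda(\tau)$ and that the integrals may be taken over all of $\mathbb R_{\geq 0}$ rather than just $[0,\tau]$, both of which rest on $F_0$ being concentrated on $[0,\tau]$ under (H\ref{cond:identification1}).
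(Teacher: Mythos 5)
Your proposal is correct and follows essentially the same route as the paper: the paper likewise notes $\w\theta=\lim_{y\to+\infty}\w\Lambda(y)=\w\Lambda(\tau)$ and $\theta_0=\Lambda_0(\tau)$, deduces the limit law from the weak convergence of $n^{1/2}(\w\Lambda-\Lambda_0)$ in Proposition \ref{proposition:strong_decomp_Lambda} via convergence of finite-dimensional laws, and reads off $v_\theta$ from the covariance process at $(\tau,\tau)$. Your additional bookkeeping (that $\Lambda_0$ is supported on $[0,\tau]$ so the integrals over $[0,\tau]$ equal the unrestricted ones, and the quadratic-variation check via (\ref{formula:quadratic_variation})) is consistent with, and slightly more explicit than, the paper's argument.
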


As $\w F = \w \Lambda / \w \theta$, invoking some Delta-method arguments, the weak convergence of the process $ n^{1/2} (\w F-F_0)$ can be established. This however is not needed in the following.

\subsection{Cure rate estimation}

Recall that the cure proportion associated to $x\in \mathcal S$ is given by $p_0(x)= \exp(-g(\gamma_0,x) \theta_0)$ and that the estimator is $\w p(x)= \exp(-g(\w \gamma,x) \w \theta)$.
A Taylor development gives that
\begin{align*}
n^{1/2} (\w p(x) - p_0(x) )  = -p_0(x) g(\gamma_0,x)\left \{n^{1/2} (\w \theta - \theta_0) + \theta_0d_0(x)^T n^{1/2} (\w\gamma-\gamma_0)\right\} +o_{\mathbb P}(1).
\end{align*}
Injecting (\ref{result1}) and (\ref{result2}) in the previous display leads to the following statement.
\begin{corollary}\label{prop:cure_proportion}
Under (H\ref{cond:identification1})--(H\ref{cond:asymptotics_gamma_2}), for a given $x\in \mathcal S$,  we have that
\begin{align*}
&n^{1/2} (\w p(x) - p_0(x) )  \\
&=  -p_0(x) g(\gamma_0,x) n^{-1/2} \sum_{i=1}^n \left\{ \int \frac{dM_i(u)}{  Q_{0}(u)} + u_0(x)^T  I_0^{-1} \int (d_0(X_i) -  h_0(u))dM_i(u)\right\}+o_{\mathbb P}(1),
\end{align*}
where $u_0(x)=\theta_0d_0(x) - \int h_0(u) d\Lambda_0(u) $. Consequently, $n^{1/2} (\w p(x) - p_0(x) )$ converges in distribution to a centered Gaussian distribution with variance 
\begin{align*}
& v_p(x) = p_0(x)^2 g(\gamma_0,x)^2 \left( \int \frac{d\Lambda_0(u)}{  Q_{0}(u)} + u_0(x)^T I_0^{-1} u_0(x) \right).
\end{align*}
\end{corollary}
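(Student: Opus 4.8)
The plan is to obtain the statement as a first-order delta-method expansion of the smooth map $f_x(\gamma,\theta)=\exp(-g(\gamma,x)\theta)$, followed by substitution of the linear representations already available for $\w\gamma$ and $\w\theta$, and finally a variance computation that exploits the martingale structure. First I would justify the expansion displayed just before the statement. The map $f_x$ is differentiable at $(\gamma_0,\theta_0)$ with gradient $-p_0(x)g(\gamma_0,x)\,(\theta_0 d_0(x)^T,\,1)$, using $\nabla_\gamma g=g\,d_\gamma$. Consistency of $\w\gamma$ (Proposition \ref{propositionconsistency}) together with $\w\theta\overset{\mathbb P}{\lr}\theta_0$, the differentiability of $\gamma\mapsto g(\gamma,x)$, and the Lipschitz control on $\nabla_\gamma g$ from (H\ref{cond:asymptotics_gamma_2}) ensure that the second-order remainder is of order the squared Euclidean distance between $(\w\gamma,\w\theta)$ and $(\gamma_0,\theta_0)$; multiplying by $n^{1/2}$ and using the $n^{1/2}$-consistency from Proposition \ref{proposition:weak_cv_gamma} and the preceding corollary on $\w\theta$ renders this remainder $o_{\mathbb P}(1)$.

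Next I would extract a linear representation for $\w\theta$. Since $\w\theta=\w\Lambda(\tau)$ and $\theta_0=\Lambda_0(\tau)$, evaluating the uniform decomposition (\ref{result2}) at $y=\tau$ yields
\begin{align*}
n^{1/2}(\w\theta-\theta_0)=n^{-1/2}\sum_{i=1}^n\int\frac{dM_i(u)}{Q_0(u)}-\left(\int h_0(u)^T d\Lambda_0(u)\right)n^{1/2}(\w\gamma-\gamma_0)+o_{\mathbb P}(1).
\end{align*}
Inserting this into the expansion, the two contributions involving $n^{1/2}(\w\gamma-\gamma_0)$ combine into $u_0(x)^T n^{1/2}(\w\gamma-\gamma_0)$ with $u_0(x)=\theta_0 d_0(x)-\int h_0(u)\,d\Lambda_0(u)$, and a further substitution of (\ref{result1}) produces the announced i.i.d.\ representation as a sum of martingale integrals
\begin{align*}
\xi_i(x)=\int\left\{\frac{1}{Q_0(u)}+u_0(x)^T I_0^{-1}(d_0(X_i)-h_0(u))\right\}dM_i(u).
\end{align*}

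Finally I would invoke Lemma \ref{lemma:weakcv} for the asymptotic normality of $n^{-1/2}\sum_{i=1}^n\xi_i(x)$ and compute its variance by means of (\ref{formula:quadratic_variation}). Writing $A_i=\int dM_i(u)/Q_0(u)$ and $B_i=\int(d_0(X_i)-h_0(u))\,dM_i(u)$, formula (\ref{formula:quadratic_variation}) gives $\var(A_1)=\int d\Lambda_0(u)/Q_0(u)$ (using $E[g(\gamma_0,X)R(u)]=Q_0(u)$) and $\var(B_1)=I_0$ directly from definition (\ref{eq:variance_I0}). The crux is the vanishing of the cross term,
\begin{align*}
\cov(A_1,B_1)=\int\frac{1}{Q_0(u)}\,E\big[(d_0(X)-h_0(u))\,g(\gamma_0,X)R(u)\big]\,d\Lambda_0(u)=0,
\end{align*}
which follows from $E[\nabla_\gamma g(\gamma_0,X)R(u)]=\nabla_\gamma Q_0(u)=h_0(u)Q_0(u)$ after interchanging expectation and differentiation; this is precisely the asymptotic independence already recorded in Proposition \ref{proposition:strong_decomp_Lambda}. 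Since $u_0(x)^T I_0^{-1}\var(B_1)I_0^{-1}u_0(x)=u_0(x)^T I_0^{-1}u_0(x)$, collecting the prefactor $p_0(x)^2 g(\gamma_0,x)^2$ reproduces $v_p(x)$.

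Because Propositions \ref{proposition:weak_cv_gamma} and \ref{proposition:strong_decomp_Lambda} already carry the analytic weight, I expect the statement to be almost immediate, and the only genuinely delicate points to be the uniform control of the Taylor remainder — for which I would rely on the local boundedness of $g$ and $\nabla_\gamma g$ furnished by (H\ref{cond:consistency_gamma_1}) and (H\ref{cond:asymptotics_gamma_2}) — and the interchange of expectation and differentiation that makes the cross-covariance vanish, legitimate under the domination conditions of (H\ref{cond:asymptotics_gamma_2}).
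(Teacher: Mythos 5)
Your proposal is correct and follows essentially the same route as the paper: the paper likewise performs the Taylor expansion of $\exp(-g(\gamma,x)\theta)$ at $(\gamma_0,\theta_0)$, injects the representations (\ref{result1}) and (\ref{result2}) (the latter at $y=\tau$ to handle $\w\theta$), and reads off the variance from Lemma \ref{lemma:weakcv}, whose block-diagonal covariance encodes exactly the vanishing cross term you verify by hand via (\ref{formula:quadratic_variation}). Your added details — the control of the Taylor remainder and the explicit computation showing $E[(d_0(X)-h_0(u))g(\gamma_0,X)R(u)]=0$ — are correct fillings-in of steps the paper leaves implicit.
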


Note that a similar result can be obtained concerning the estimator $\exp(-g(\w \gamma,x) \w \Lambda(y))$ of the survival function $S_0(y|x)$ but we prefer to omit this for the sake of brevity.
%\begin{corollary}
%Under (H\ref{cond:identification1})--(H\ref{cond:asymptotics_gamma_2}), for a given $x\in \mathcal S$ and $y\in \mathbb R_{\geq 0} $, we have that $n^{1/2}(\w S(y|x) - S_0(y|x)) $ converges in distribution to a centered Gaussian distribution with variance
%\begin{align*}
%v_{S}  =  S_0(y| x)^2 g(\gamma_0,x)^2 \left( \int_0^y  \frac{d\Lambda_0(u)}{  Q_{0}(u)} + w_0(y|x)^T I_0^{-1} w_0(y|x) \right).
%\end{align*}
%with $w_0(y|x)=\Lambda_0(y)d_0(x) - \int_0^y h_0(u) d\Lambda_0(u) $.
%\end{corollary}

\section{Simulation study}\label{s6}

We performed some extensive Monte Carlo simulations in order to assess the performnce of our suggested estimators. The simulations were performed  under a variety of conditions on the censoring rate, sample size and cure rate. The data were generated according to the following model:
\begin{align}\label{def:model_simu}
S(t|x_1,x_2) =\exp\big[-\exp\big\{\Gamma(\gamma_{01}x_1+\gamma_{02} x_2)\big\} \theta_0 F_0(t)\big].
\end{align}
In the above model, we chose the link function $\Gamma(\cdot)$ to be either the identity, the cubic or the sine function. For clarity, in the first part of this simulation study we will focus on the case of the identity function. With few exceptions, all our comments and findings also apply to the case where $\Gamma(\cdot)=(\cdot)^3$ and $\Gamma(\cdot)=\sin(\cdot)$. In all our simulations, $\log(\theta_0)=0.1$, $\gamma_{01}=-2$, $\gamma_{02}=1$, $F_0$ is the cumulative distribution function of a uniform variable on $[0, 1]$, $X_1$ is a uniformly distributed random variable on $[\alpha, \alpha + 1]$, $X_2$ is a normal random variable with mean $\alpha$ and standard deviation $1/12$, and $X_1$ and $X_2$ are independent. The censoring variable is exponential with parameter $\lambda$ and is independent of $(X_1,X_2)$. By varying the latter we mainly control the censoring rate, while by varying $\alpha$ we control the cure rate.

Suppose we have a sample $(Y_i,\delta_i,X_i)$, $i=1,\ldots,n$ from the distribution described above, with $X_i=(X_{i1},X_{i2})^T$.
We obtain $\w{\gamma}=(\w{\gamma}_1,\w{\gamma}_2)^T$, the estimator of $\gamma_0=(\gamma_{01},\gamma_{02})^T$, by maximizing the partial likelihood function given by (\ref{NPMLEcoxmodel_gamma_Lambda}), using the Newton-Raphson algorithm. We get $\w{\theta}$, the estimator of $\theta_0$, by applying (\ref{NPMLE_coxmodel_theta_F}). The cure probability estimator is then obtained by
$$
\w{p}(x_1,x_2)=\exp\big[-\exp\big\{\Gamma(\w{\gamma}_1x_1+\w{\gamma}_2x_2)\big\}\w{\theta}\big].
$$      
Using the plug-in principle together with (\ref{eq:variance_I0}), we obtain an estimator for the asymptotic variance-covariance matrix of $\w{\gamma}$ which is given by $\w{I}^{-1}/n$, where 
\begin{align*}
\w{I}= n^{-1} \sum_{i=1}^n \delta_i  \left\{ (d_{\widehat \gamma} (X_i )  -\w{h}_{\widehat \gamma} (Y_i) )( d_{\widehat \gamma} (X_i )- \w{h}_{\widehat \gamma} (Y_i)) ^T\right\},
\end{align*}
with, for every $\gamma\in B$, $\w h_\gamma(y)= {\nabla_{\gamma} \w Q_\gamma(y)}/{\w Q_\gamma(y)}$.
Similarly, using (\ref{express:asym_variance_theta}), we obtain an estimator of the asymptotic variance of $\w{\theta}$ given by $\w v_\theta/n$, where 
\begin{align*}
\w v_\theta = n^{-1}\sum_{i=1}^n \frac{\delta_i}{\w Q_{\w \gamma}(Y_i)^2} + \left( n^{-1}\sum_{i=1}^n \frac{\delta_i \w h_{\w \gamma} (Y_i)}{\w Q_{\w \gamma}(Y_i)}  \right)^T \w I^{-1} \left( n^{-1}\sum_{i=1}^n \frac{\delta_i \w h_{\w \gamma} (Y_i)}{\w Q_{\w \gamma}(Y_i)}  \right).
\end{align*} 
And using the expression for the variance of $\w p$ given in Corollary \ref{prop:cure_proportion}, we obtain an estimator of the asymptotic variance of $\w p(x)$ given by $\w v_p /n$, where 
\begin{align*}
\w v_p = \w p(x)^2 g(\w \gamma,x)^2 \left( n^{-1}\sum_{i=1}^n \frac{\delta_i}{\w Q_{\w \gamma}(Y_i)^2}  +\w u(x)^T \w I^{-1} \w u(x) \right),
\end{align*}
with $\w u(x) = \w \theta d_{\w \gamma}(x) - n^{-1}\sum_{i=1}^n {\delta_i\w h_{\w \gamma} (Y_i)}/{\w Q_{\w \gamma}(Y_i)}$ and $x=(x_1,x_2)^T$.

We perform $N = 2000$ replications for four sample sizes ($n = 100$, $n = 200$, $n=400$ and $n=600$), three levels
of censoring ($20\%$, $40\%$ and $60\%$) and three levels of cure ($20\%$, $40\%$ and $60\%$). For every scenario and every replication, we calculate the estimators $\w{\gamma}_1$, $\w{\gamma}_2$, $\w{\theta}$ and $\w{p}(x_1,x_2)$ together with their estimated asymptotic variance ($\widehat{AVar}$) and the corresponding asymptotic $95\%$ confidence intervals based on the asymptotic normality. Based on the $2000$ replications, we also calculate the empirical bias, the empirical variance ($VAR$), the empirical mean squared error ($MSE$) of every estimator together with the empirical coverage probability ($COV$) for the confidence intervals. In the case of the cure probability $p(x_1,x_2)$ we did the calculations for $x_2=0$ and every quantile of $X_1$ corresponding to the probability levels $0.01,0.02,\ldots,0.99.$ We summarize the results by taking the average of the resulting $99$ empirical $VAR$'s, empirical $MSE$'s and empirical $COV$'s. Due to space limitations, we provide below only some selected but representative scenarios. 

\begin{figure}[H]
	\centering
	
	\begin{subfigure}{1\textwidth}
		\includegraphics[width=.95\textwidth]{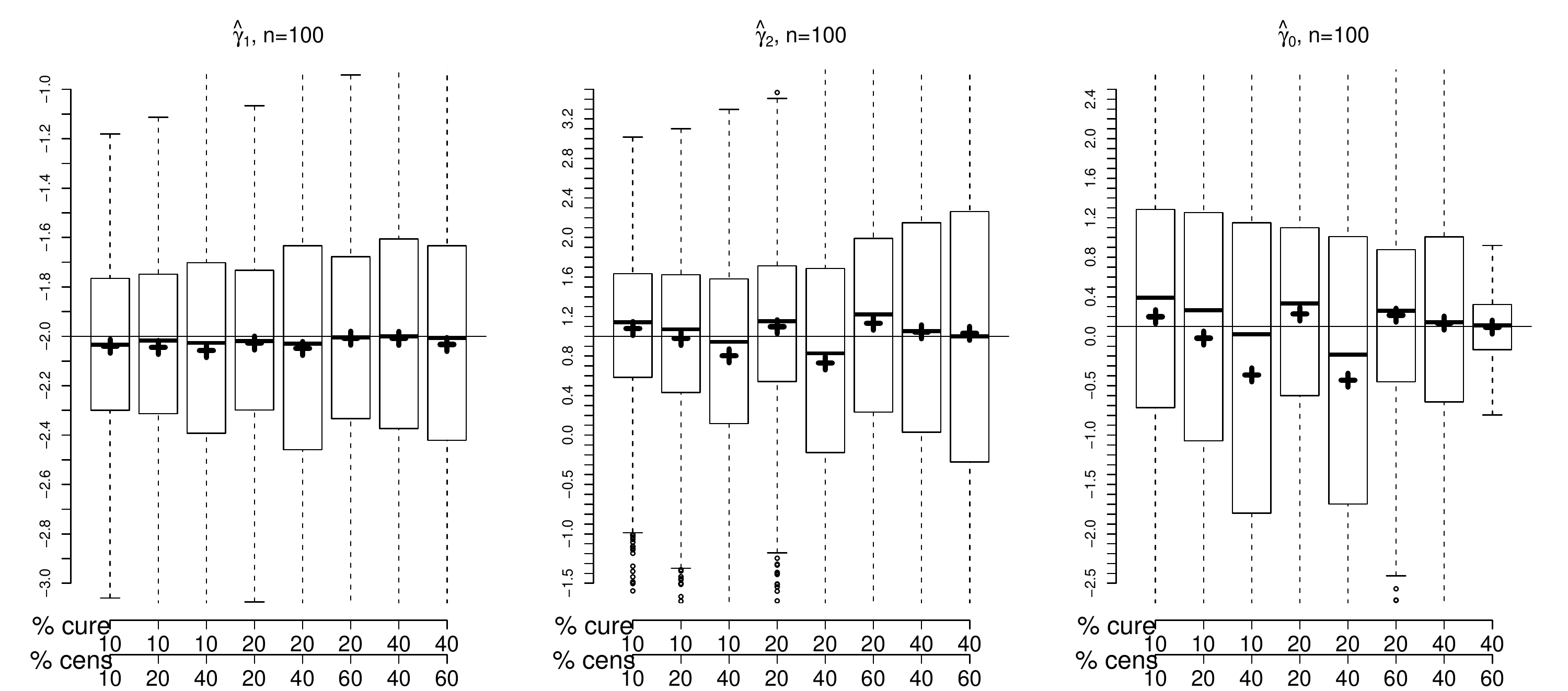}
		%\caption{Picture 1a}
		%\label{fig:1a}
	\end{subfigure}
	
	\par\smallskip 
	
	\begin{subfigure}{1\textwidth}
		\includegraphics[width=.95\textwidth]{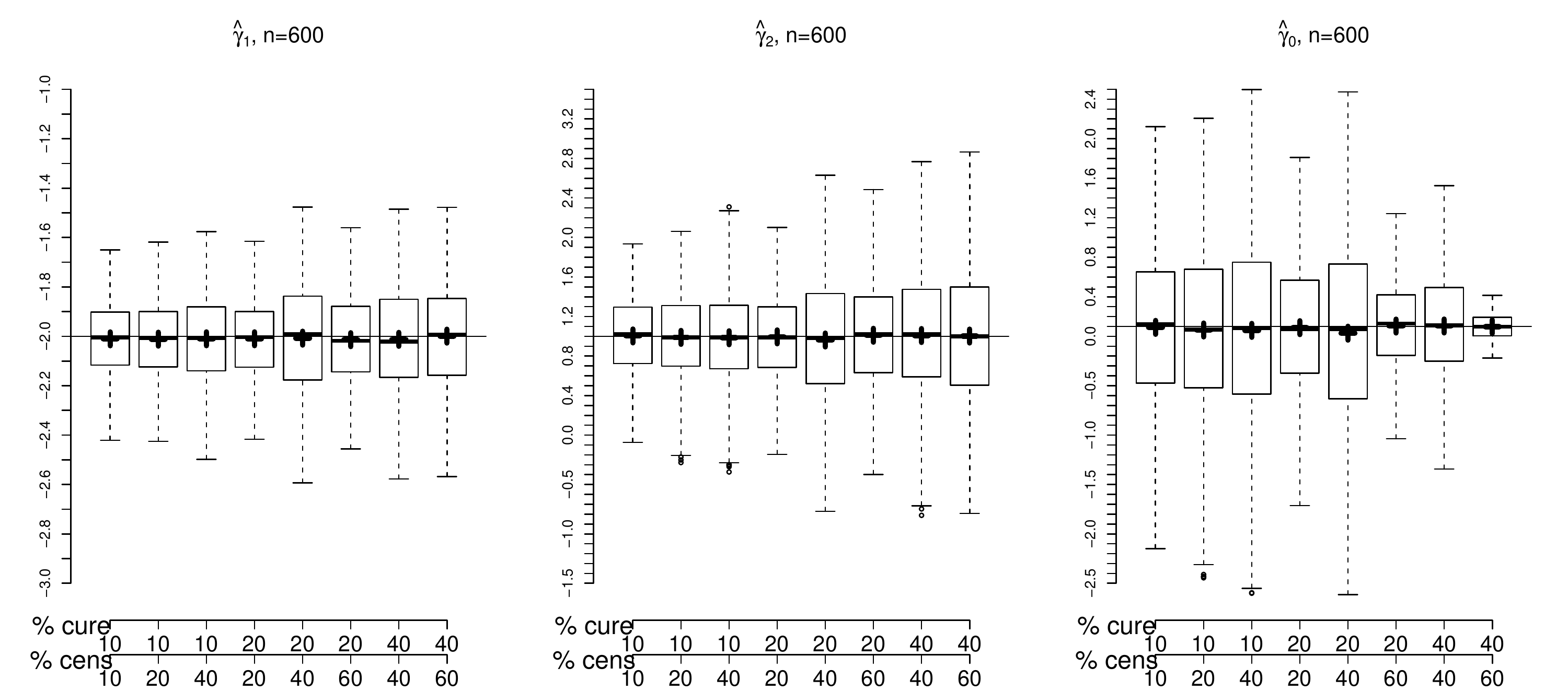}
		%\caption{Picture 1a}
		%\label{fig:1a}
	\end{subfigure}
	
	\caption{Boxplots of $\w{\gamma}_1$, $\w{\gamma}_2$ and $\w{\gamma}_0$ for $n=100$ and $n=600$ and for $\Gamma(\cdot)=\cdot$. The empirical mean of the estimates is indicated by a $+$. The true values are indicated by a horizontal line.}  
	\label{fig:fig1e}
\end{figure}

Figure \ref{fig:fig1e} provides the boxplots for $\w{\gamma}_1$, $\w{\gamma}_2$ and $\w{\gamma}_0=\log(\w{\theta})$. By comparing
the upper and lower part ($n=100$ vs $n=600$) of this figure, we clearly see that the performance of the estimators improves with increasing sample size both in terms of bias and variance. This confirms the consistency of these estimators. This figure also shows the effect of the cure rate and the censoring rate. As expected, increasing the latter rates results in a larger bias and, especially, in a larger variance of the estimators. This effect can also be seen in Figure \ref{fig:fig2} which provides the boxplots for the asymptotic estimated variances. Compared to the censoring rate, the cure rate seems to have no, or very limited, effect on $\w{\gamma}_1$ and $\w{\gamma}_2$, but it does affect the bias and the variance of $\w{\gamma}_0$. In fact, when the percentage of cure increases, the bias and the variance of $\w{\gamma}_0$ decrease (and so does the MSE).  Globally, it seems that the estimation of $\w{\gamma}_0$ is more difficult than the estimation of $\w{\gamma}_1$ and $\w{\gamma}_2$. This is especially the case when the censoring percentage is large and the cure probability is small. If moreover the sample size is small, then the bias can be quite large.  

\begin{figure}[H]
	\centering
	\begin{subfigure}{1\textwidth}
		\includegraphics[width=1\textwidth]{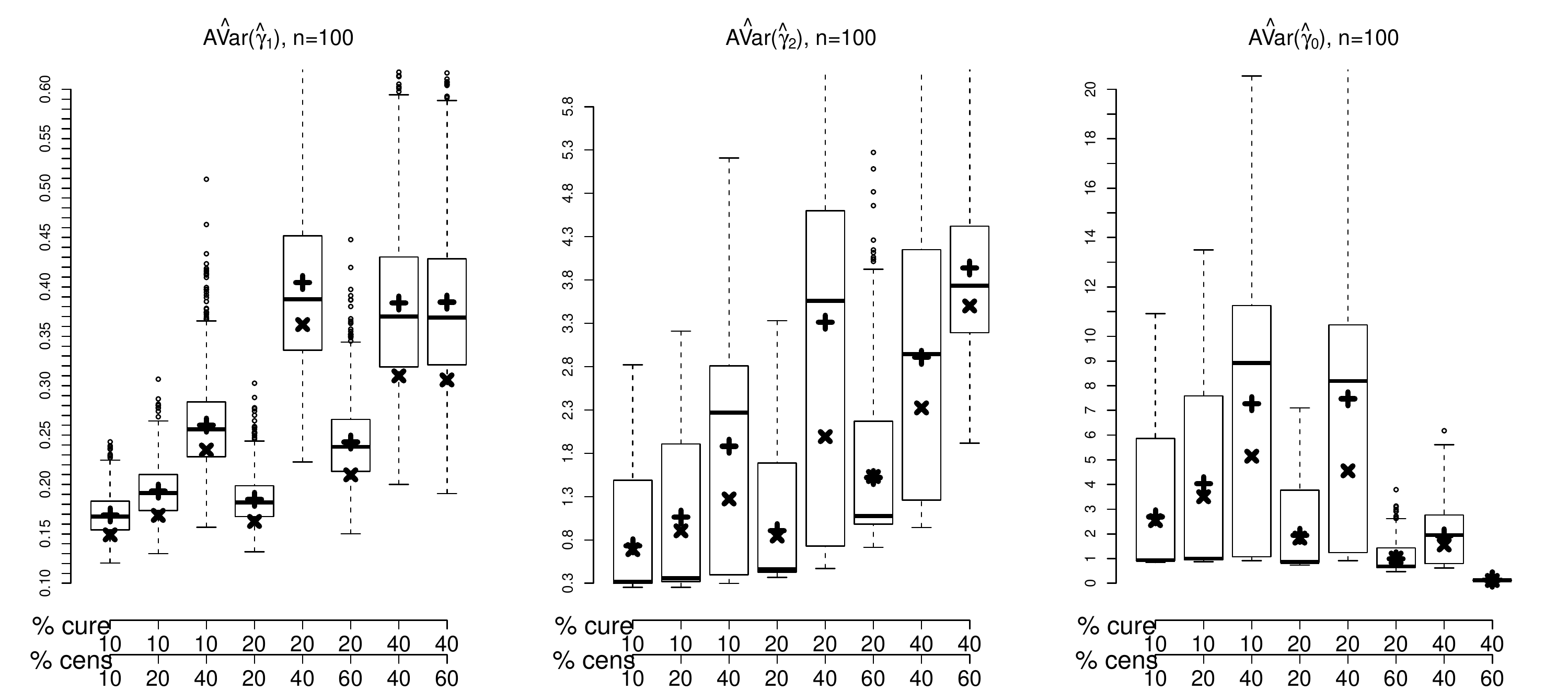}
		%\caption{Picture 1a}
		%\label{fig:1a}
	\end{subfigure}
	
	\par\smallskip 
	
	\begin{subfigure}{1\textwidth}
		\includegraphics[width=\textwidth]{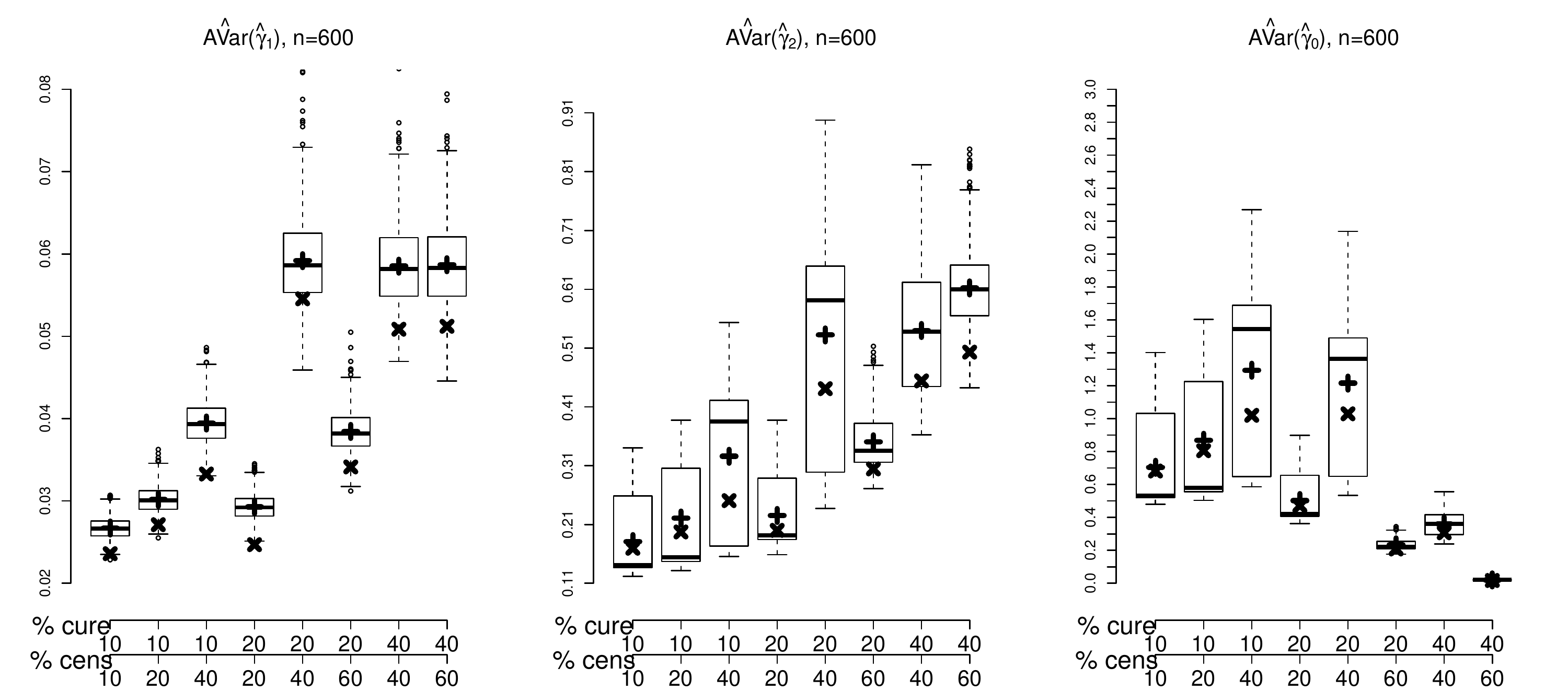}
		%\caption{Picture 1a}
		%\label{fig:1a}
	\end{subfigure}
	\caption{Boxplots of $\widehat{AVar}(\w{\gamma}_1)$, $\widehat{AVar}(\w{\gamma}_2)$ and $\widehat{AVar}(\w{\gamma}_0)$ for $n=100$ and $n=600$ and for $\Gamma(\cdot)=\cdot$. The empirical mean of $\widehat{AVar}$ is indicated by a $+$, the empirical variance of the estimates ($\w{\gamma}_1,\w{\gamma}_2,\w{\gamma}_0$) is indicated by a $\times$.}  
	\label{fig:fig2}
\end{figure}

As we said before, Figure \ref{fig:fig2} provides the boxplots for the asymptotic estimated variances. The plots suggest that the proposed estimators are consistent (note that the $y$-axis in the upper and the lower plots do not have the same scale). Basically the remarks we made above on the effect of the proportion of cure and censoring remain valid for the proposed estimators of the variances. Again, it can be seen that estimating the variance of $\w{\gamma}_0$ is more difficult and can lead to, relatively, large variances especially when the censoring and cure rates are large and the sample size is small.   

Figure \ref{fig:fig3e} which provides some Q-Q plots for the estimated parameters confirms the validity of the normal approximation of the sampling distributions of $\w{\gamma}_1$ and $\w{\gamma}_2$. However, this approximation seems to be less accurate for $\w{\theta}$ even when $n=600$ (figure not shown here). In fact the sampling distribution of the latter tends to be positively skewed especially when the censoring rate is large. Applying the logarithmic transformation, seems to solve the problem as it makes the distribution more symmetric (see the Q-Q plot for $\w{\gamma}_0$ in Figure \ref{fig:fig3e}).

\begin{figure}[H]
	\centering
	\begin{subfigure}{0.49\textwidth}
		\includegraphics[width=1\textwidth]{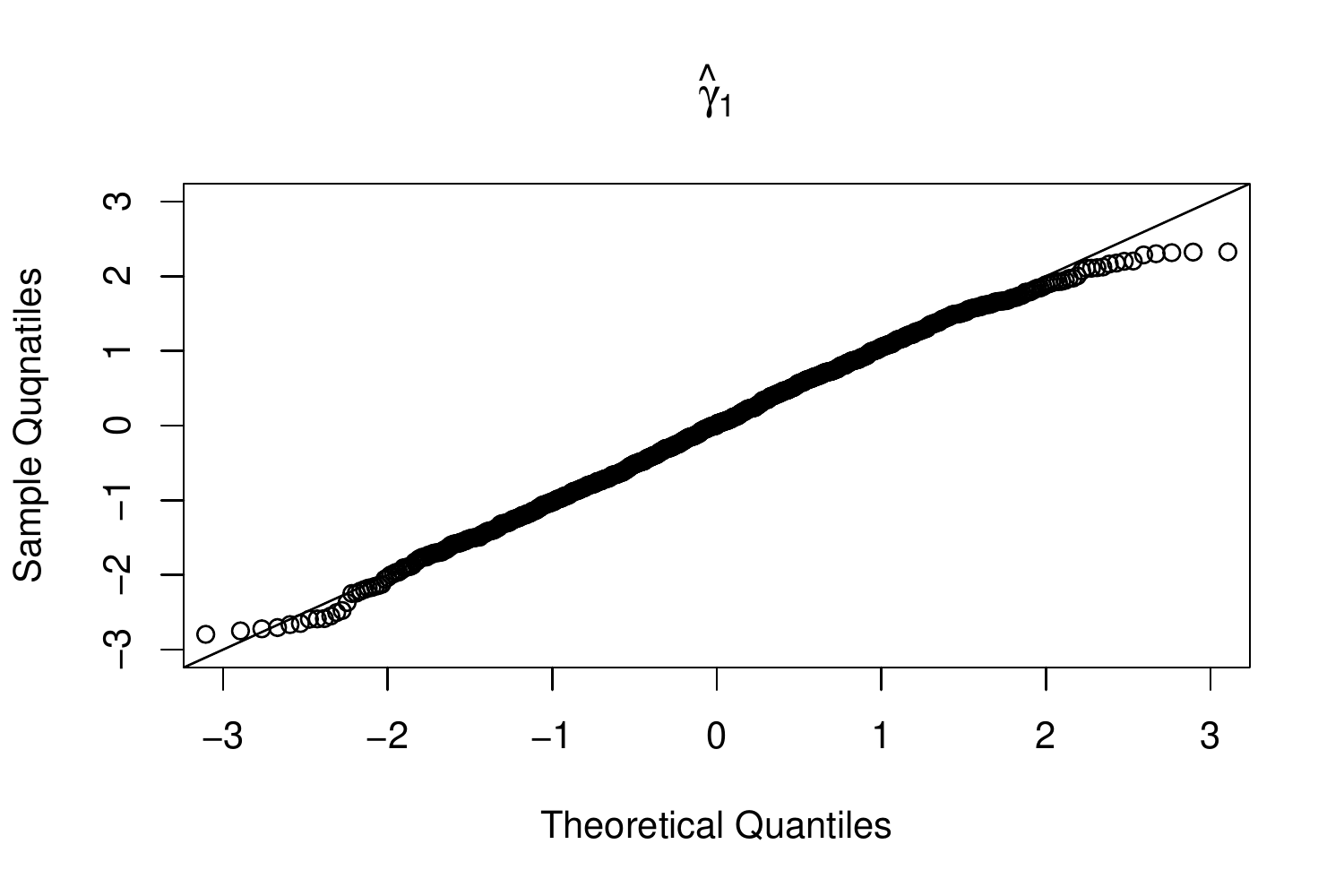}
		%\caption{Picture 1a}
		%\label{fig:1a}
	\end{subfigure}
	\begin{subfigure}{0.49\textwidth}
		\includegraphics[width=1\textwidth]{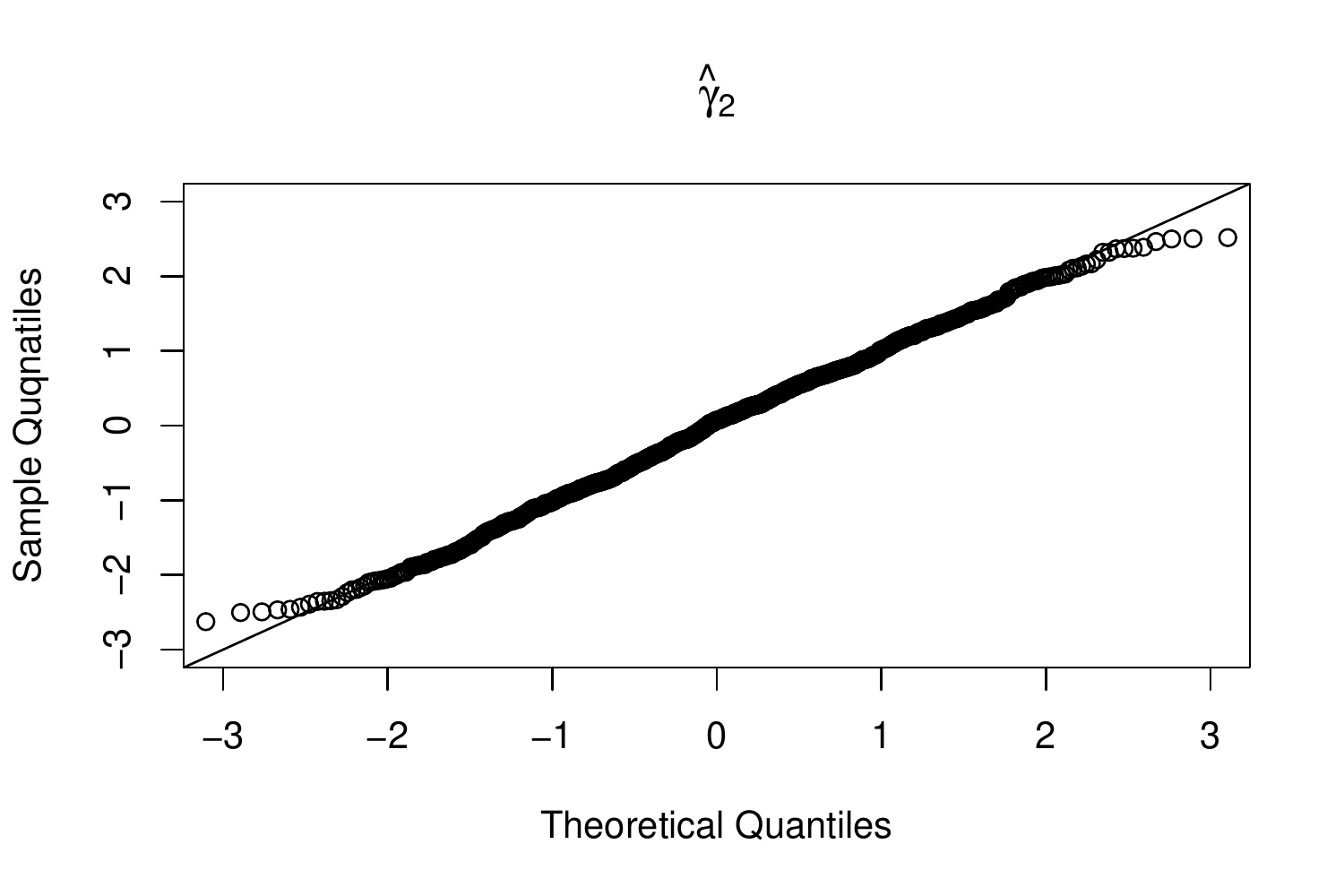}
		%\caption{Picture 1a}
		%\label{fig:1a}
	\end{subfigure}
	\begin{subfigure}{0.49\textwidth}
		\includegraphics[width=1\textwidth]{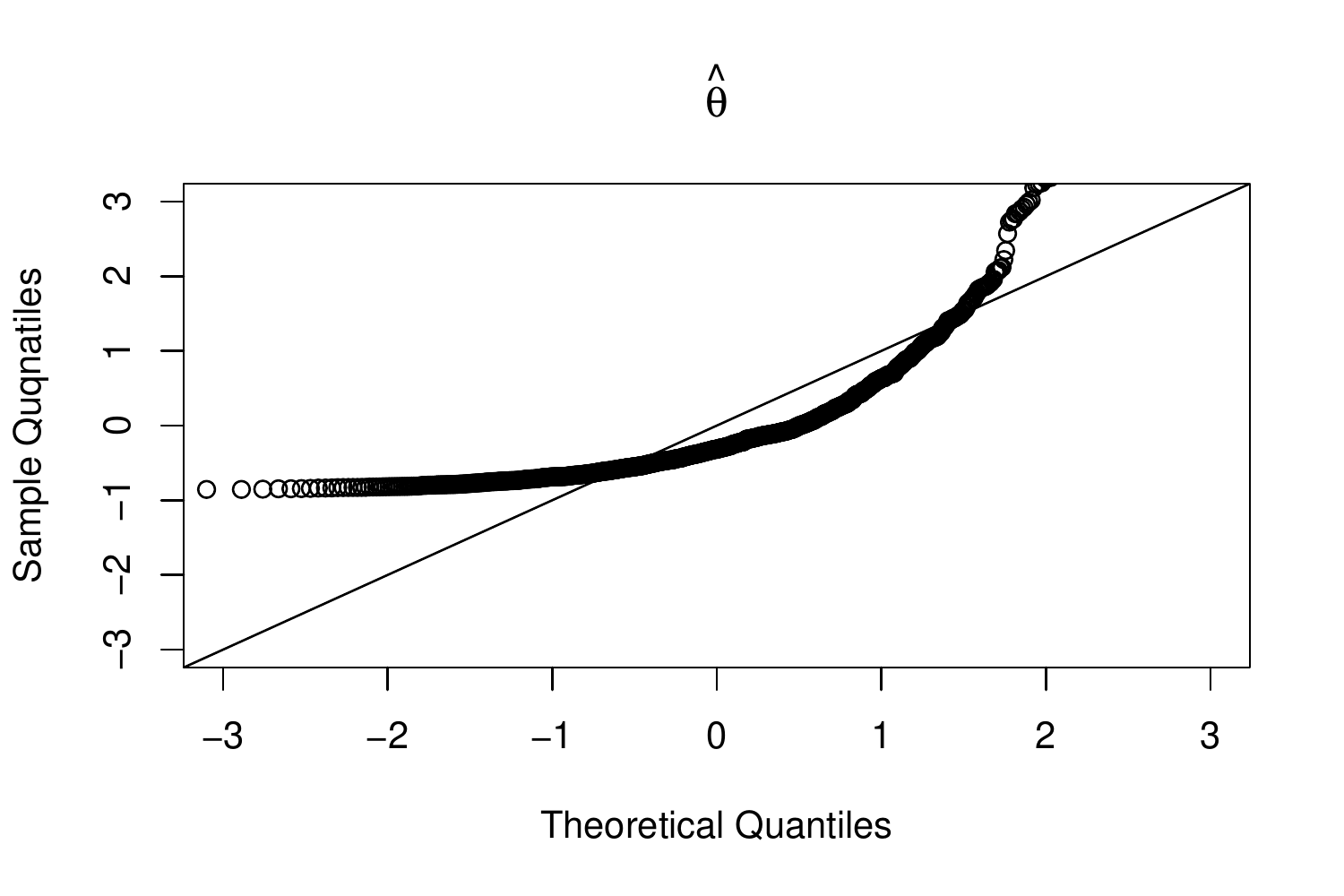}
		%\caption{Picture 1a}
		%\label{fig:1a}
	\end{subfigure}
	\begin{subfigure}{0.49\textwidth}
		\includegraphics[width=1\textwidth]{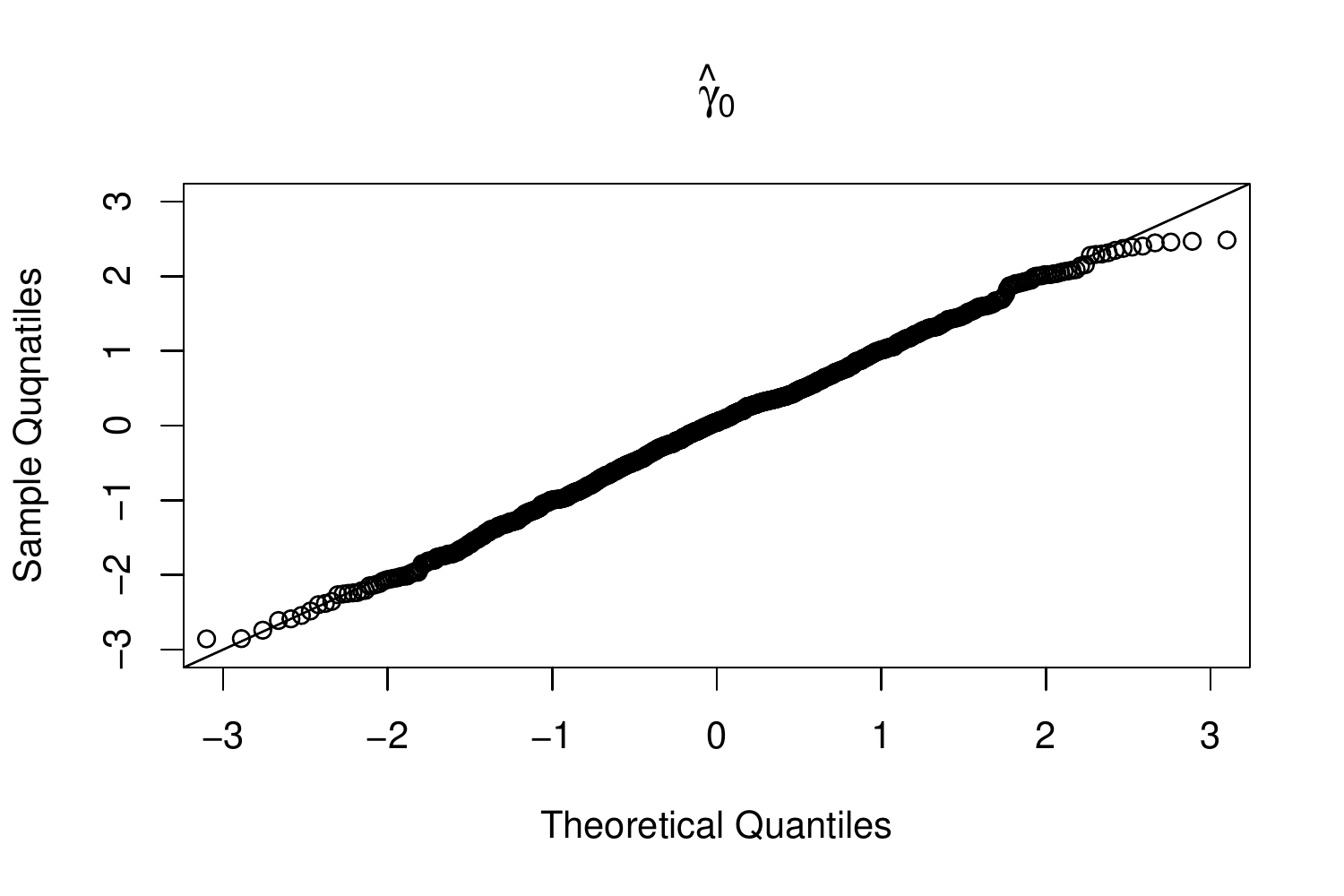}
		%\caption{Picture 1a}
		%\label{fig:1a}
	\end{subfigure}
	\caption{Normal Q-Q plot for the estimates ($\w{\gamma}_1, \w{\gamma}_2, \w{\theta}, \w{\gamma}_0$) for $n=100$ and for $\Gamma(\cdot)=\cdot$. The proportion of censoring and the cure rate both equal $0.40$.}  
	\label{fig:fig3e}
\end{figure}

In Table \ref{tab:tab1} we give the MSE and the variance for some of the studied scenarios and for $\Gamma(\cdot)=\cdot$, $\Gamma(\cdot)=(\cdot)^3$ and $\Gamma(\cdot)=\sin(\cdot).$ It is clear from these results that the variance is the dominant component in the mean squared errors. It can also be observed that the obtained results with the link $\Gamma(\cdot)=\cdot$ and $\Gamma(\cdot)=(\cdot)^3$ are globally better than the corresponding results obtained with $\Gamma(\cdot)=\sin(\cdot).$ Table \ref{tab:tab1} also shows the coverage probabilities ($COV$) of the $95\%$ asymptotic confidence intervals for the parameters $\gamma_1$, $\gamma_2$ and $\gamma_0=\log(\theta)$. The confidence intervals for the latter are based on the asymptotic normality of $\hat{\theta}$ and the Delta method. Globally, the obtained COV's are close to the nominal level. With $n=100$, the confidence intervals tend to be liberal when $\Gamma(\cdot)=\sin(\cdot)$ especially for $\gamma_0$. This also happens for $\gamma_1$ when $\Gamma(\cdot)=(\cdot)^3$.

% Table generated by Excel2LaTeX from sheet 'Feuil1'
\begin{table}[H]%
\centering
{\footnotesize
	\begin{tabular}{|c|c|c|ccc|ccc|ccc|}
		\toprule
		&       &       & \multicolumn{3}{c|}{MSE} & \multicolumn{3}{c|}{VAR} & \multicolumn{3}{c|}{COV} \\
		\toprule
		$n$   & $\%cure$ & $\%cens$ & $\gamma_{1}$ & $\gamma_{2}$ & $\gamma_0$ & $\gamma_{1}$ & $\gamma_{2}$ & $\gamma_0$ & $\gamma_{1}$ & $\gamma_{2}$ & $\gamma_0$ \\
		\toprule
		\multicolumn{12}{c}{$\Gamma(\cdot)=\cdot$} \\
		\toprule
		100   & 10    & 20    & 0.171 & 0.905 & 3.516 & 0.169 & 0.905 & 3.502 & 0.962 & 0.902 & 0.879 \\
		100   & 20    & 20    & 0.163 & 0.853 & 1.841 & 0.162 & 0.844 & 1.826 & 0.970 & 0.918 & 0.905 \\
		100   & 20    & 40    & 0.233 & 1.304 & 2.898 & 0.231 & 1.301 & 2.881 & 0.962 & 0.913 & 0.896 \\
		100   & 40    & 40    & 0.210 & 1.545 & 1.034 & 0.210 & 1.527 & 1.021 & 0.973 & 0.932 & 0.923 \\
		100   & 40    & 60    & 0.310 & 2.320 & 1.550 & 0.310 & 2.318 & 1.549 & 0.980 & 0.927 & 0.924 \\
		\hline
		600   & 10    & 20    & 0.027 & 0.197 & 0.806 & 0.027 & 0.197 & 0.805 & 0.968 & 0.948 & 0.941 \\
		600   & 20    & 20    & 0.025 & 0.200 & 0.471 & 0.025 & 0.200 & 0.471 & 0.968 & 0.956 & 0.952 \\
		600   & 20    & 40    & 0.034 & 0.292 & 0.666 & 0.034 & 0.292 & 0.665 & 0.972 & 0.954 & 0.942 \\
		600   & 40    & 40    & 0.034 & 0.304 & 0.208 & 0.034 & 0.304 & 0.208 & 0.968 & 0.962 & 0.956 \\
		600   & 40    & 60    & 0.051 & 0.454 & 0.306 & 0.051 & 0.454 & 0.306 & 0.969 & 0.964 & 0.956 \\
		\toprule
		\multicolumn{12}{c}{$\Gamma(\cdot)=(\cdot)^3$} \\
		\toprule
		100   & 10    & 20    & 0.050 & 0.044 & 0.112 & 0.050 & 0.044 & 0.107 & 0.955 & 0.951 & 0.967 \\
		100   & 20    & 20    & 0.110 & 0.102 & 0.047 & 0.108 & 0.101 & 0.047 & 0.875 & 0.873 & 0.946 \\
		100   & 20    & 40    & 0.128 & 0.123 & 0.084 & 0.127 & 0.122 & 0.083 & 0.882 & 0.879 & 0.966 \\
		100   & 40    & 40    & 0.227 & 0.067 & 0.059 & 0.205 & 0.067 & 0.059 & 0.899 & 0.921 & 0.953 \\
		100   & 40    & 60    & 0.299 & 0.131 & 0.111 & 0.266 & 0.130 & 0.111 & 0.895 & 0.913 & 0.955 \\
		\hline
		600   & 10    & 20    & 0.006 & 0.005 & 0.016 & 0.006 & 0.005 & 0.016 & 0.958 & 0.962 & 0.954 \\
		600   & 20    & 20    & 0.019 & 0.018 & 0.007 & 0.018 & 0.017 & 0.007 & 0.929 & 0.923 & 0.951 \\
		600   & 20    & 40    & 0.023 & 0.022 & 0.012 & 0.022 & 0.021 & 0.012 & 0.936 & 0.937 & 0.956 \\
		600   & 40    & 40    & 0.036 & 0.006 & 0.008 & 0.035 & 0.006 & 0.008 & 0.944 & 0.933 & 0.941 \\
		600   & 40    & 60    & 0.062 & 0.010 & 0.014 & 0.059 & 0.010 & 0.014 & 0.935 & 0.924 & 0.933 \\
		\toprule
		\multicolumn{12}{c}{$\Gamma(\cdot)=\sin(\cdot)$} \\
		\toprule
		100   & 10    & 20    & 0.656 & 0.532 & 0.270 & 0.656 & 0.532 & 0.204 & 0.915 & 0.908 & 0.914 \\
		100   & 20    & 20    & 0.625 & 0.353 & 0.173 & 0.538 & 0.248 & 0.132 & 0.949 & 0.959 & 0.897 \\
		100   & 20    & 40    & 0.942 & 0.582 & 0.222 & 0.790 & 0.411 & 0.173 & 0.945 & 0.946 & 0.867 \\
		100   & 40    & 40    & 0.988 & 0.547 & 0.138 & 0.737 & 0.522 & 0.136 & 0.954 & 0.837 & 0.833 \\
		100   & 40    & 60    & 1.614 & 0.708 & 0.161 & 1.197 & 0.666 & 0.156 & 0.932 & 0.849 & 0.871 \\
		\hline
		600   & 10    & 20    & 0.104 & 0.085 & 0.007 & 0.104 & 0.085 & 0.007 & 0.979 & 0.977 & 0.965 \\
		600   & 20    & 20    & 0.088 & 0.029 & 0.027 & 0.087 & 0.026 & 0.024 & 0.946 & 0.979 & 0.970 \\
		600   & 20    & 40    & 0.124 & 0.044 & 0.042 & 0.121 & 0.038 & 0.037 & 0.937 & 0.971 & 0.967 \\
		600   & 40    & 40    & 0.088 & 0.126 & 0.044 & 0.073 & 0.126 & 0.044 & 0.987 & 0.910 & 0.875 \\
		600   & 40    & 60    & 0.144 & 0.182 & 0.064 & 0.116 & 0.182 & 0.062 & 0.982 & 0.897 & 0.857 \\
		\bottomrule
	\end{tabular}%
	\caption{Empirical mean squared error (MSE), empirical variance (VAR) and empirical coverage probability (COV) for nominal $95\%$ confidence intervals for $\gamma_{1}$, $\gamma_{2}$ and $\gamma_0$.}
	\label{tab:tab1}
	}
\end{table}%

Figure \ref{fig:fig4} shows the empirical coverage probabilities (COV) of the confidence intervals for $p(x_1,x_2)$. We can see that these COV's can be quite unsatisfactory especially in the left tail of the support of $X_1$ even when the sample size is relatively large. To correct for this, we apply the logit transformation and the Delta method to construct confidence intervals for $\log(p/(1-p))$ and transform back (taking the logistic transformation) to get confidence intervals for the cure probabilities. This leads to very satisfactory results with coverage probabilities close to the nominal level both in the middle and in the tails especially when the sample size is large.

\begin{figure}[H]
	\centering
	\begin{subfigure}{0.49\textwidth}
		\includegraphics[width=1\textwidth]{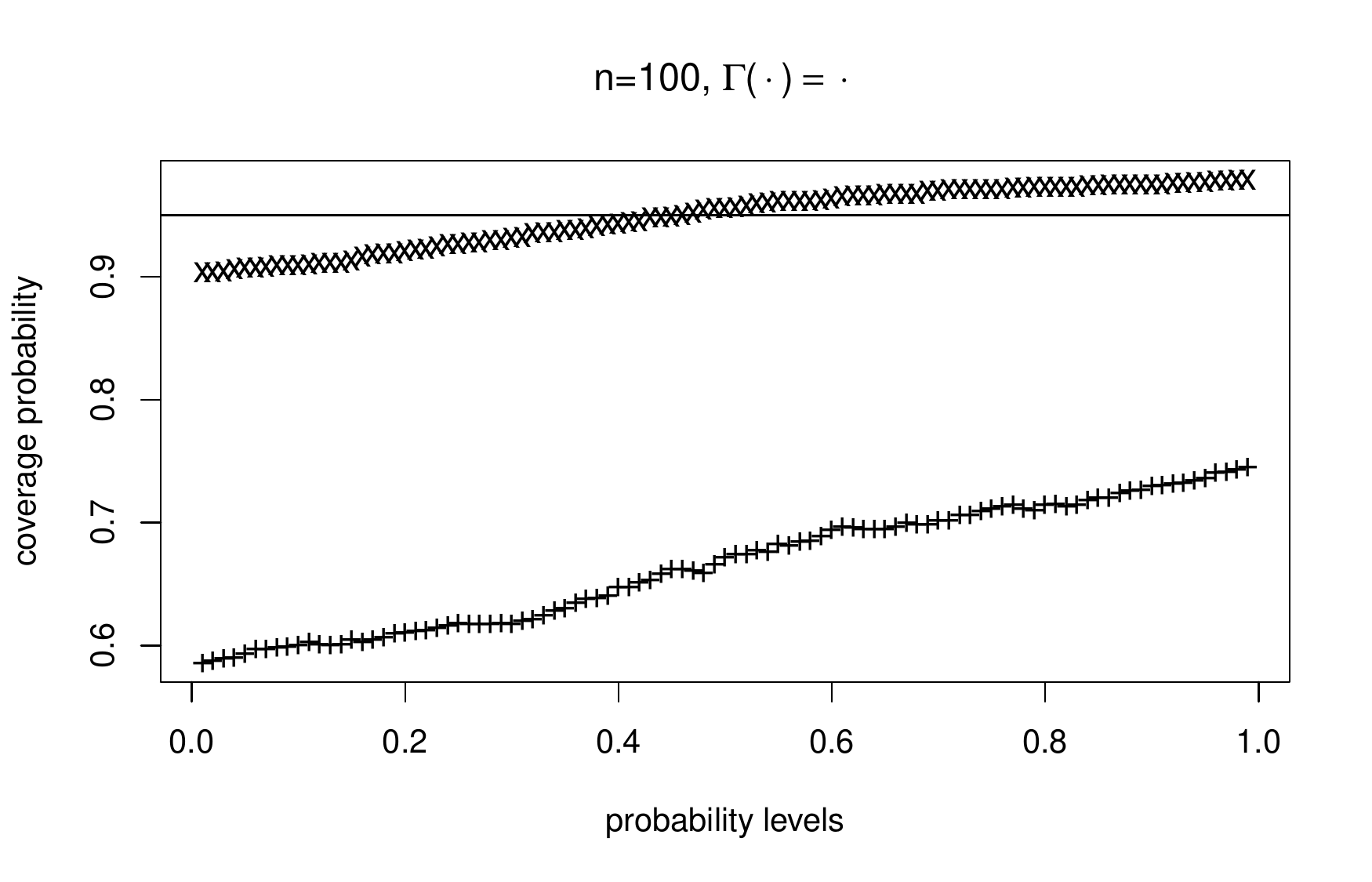}
		%\caption{Picture 1a}
		%\label{fig:1a}
	\end{subfigure}
	\begin{subfigure}{0.49\textwidth}
	\includegraphics[width=1\textwidth]{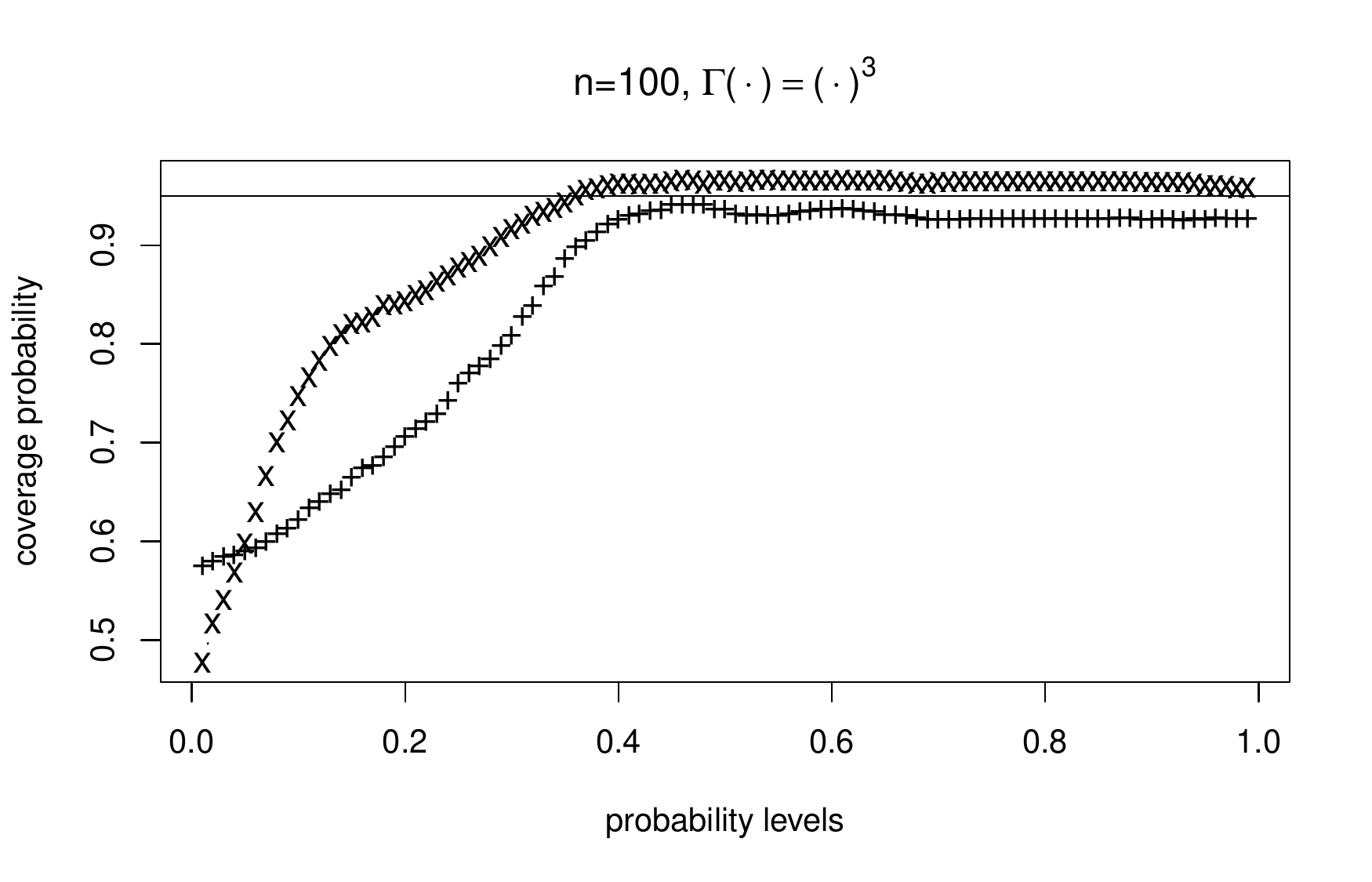}
	%\caption{Picture 1a}
	%\label{fig:1a}
\end{subfigure}
\begin{subfigure}{0.49\textwidth}
	\includegraphics[width=1\textwidth]{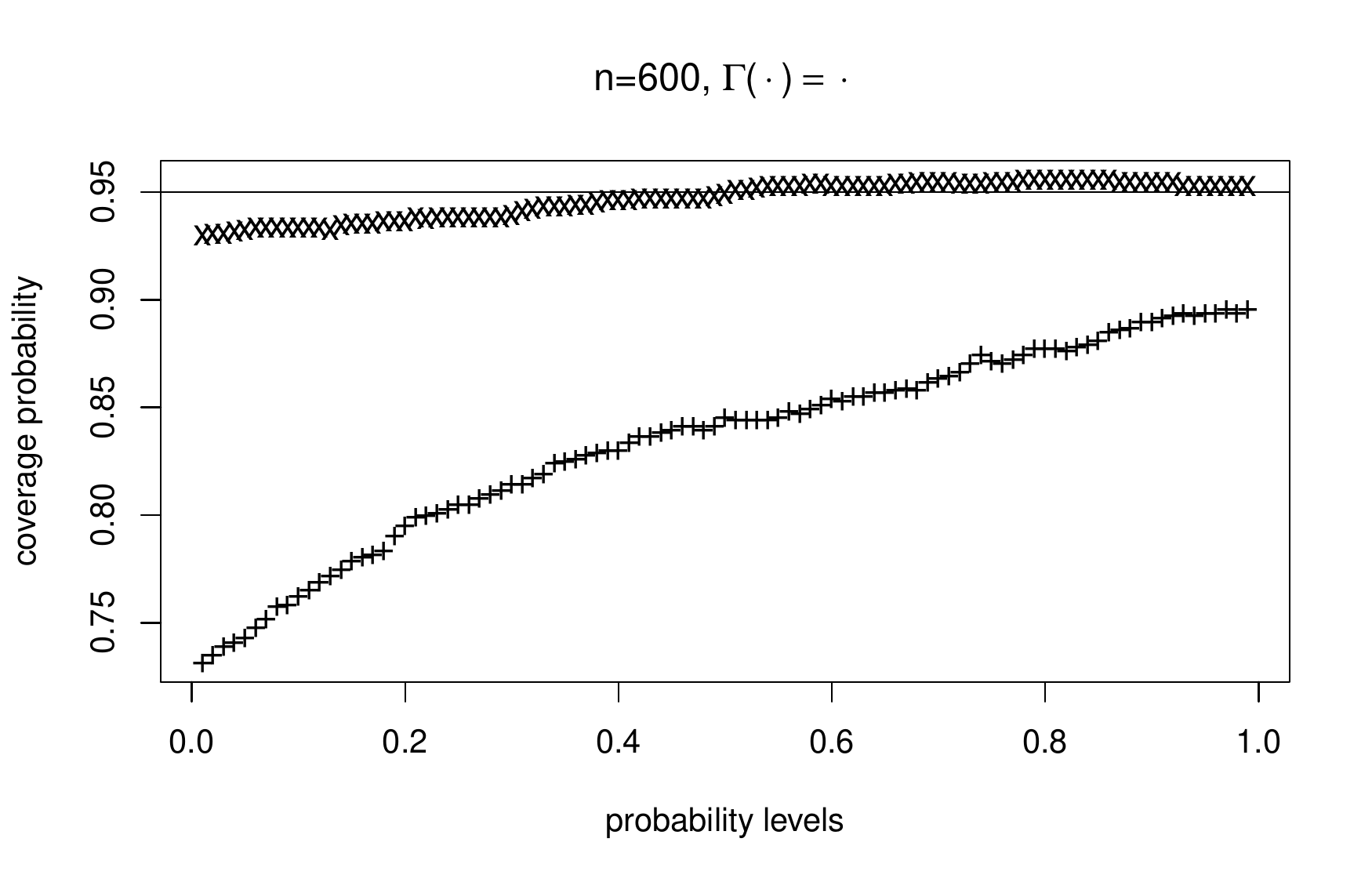}
	%\caption{Picture 1b}
	%\label{fig:1b}
\end{subfigure}
\begin{subfigure}{0.49\textwidth}
	\includegraphics[width=1\textwidth]{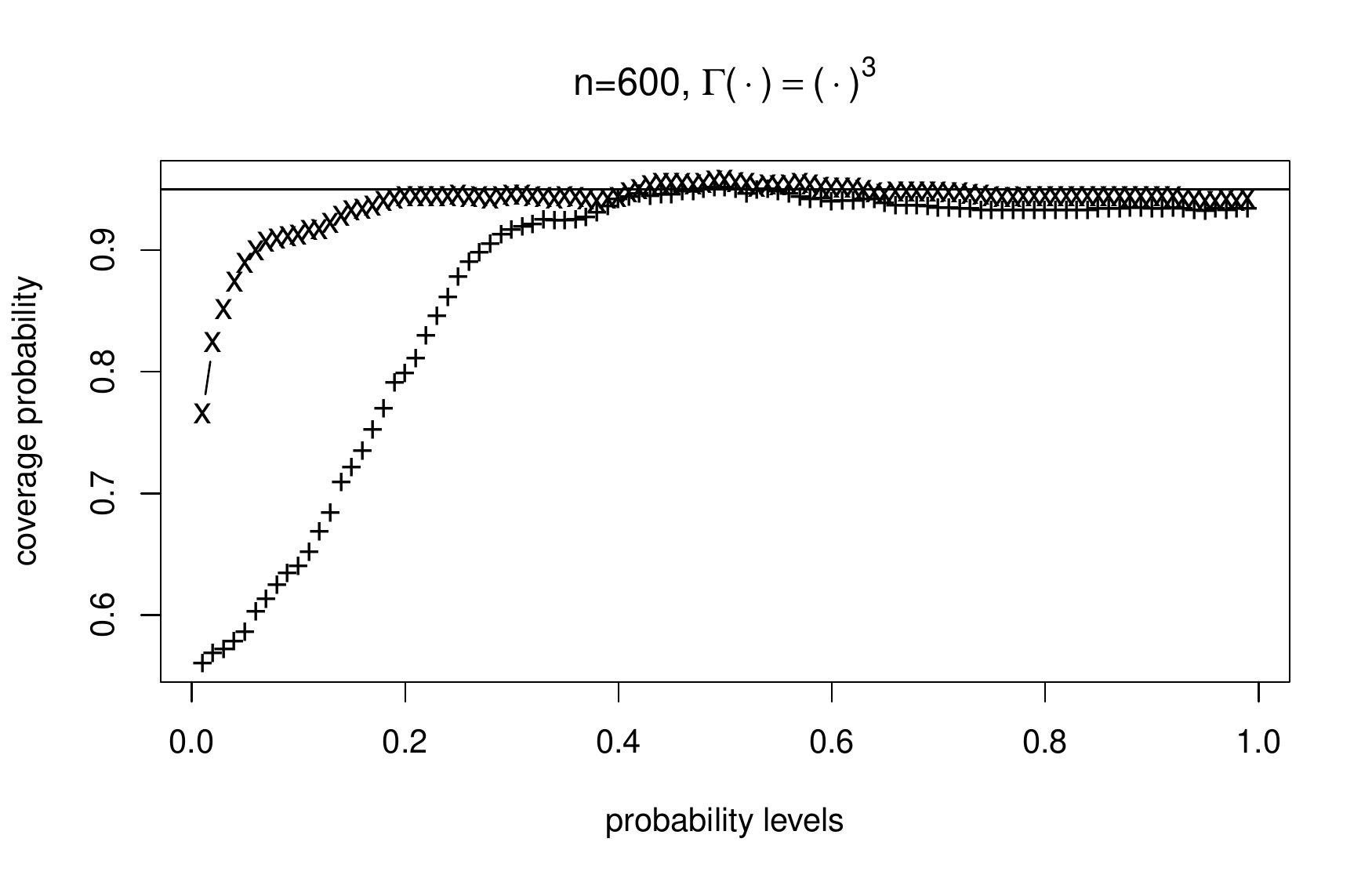}
	%\caption{Picture 1b}
	%\label{fig:1b}
\end{subfigure}
%	\begin{subfigure}{0.49\textwidth}
%	\includegraphics[width=1\textwidth]{feg4asin3}
%\end{subfigure}
%\begin{subfigure}{0.49\textwidth}
%	\includegraphics[width=1\textwidth]{feg4bsin3}
%\end{subfigure}
	\caption{Empirical coverage probabilities of nominal $95\%$ confidence intervals for the cure probability as a function of $x_1$  for $x_2=0$.  The coverage probabilities obtained without transformation are indicated by a $+$, those obtained after a logit transformation are indicated by a $\times$. The proportion of censoring and the cure rate both equal $0.40$.}  
	\label{fig:fig4}
\end{figure}

\section{Real data application} 

To illustrate the application of our model, the proposed methodology is applied on a real data set from a breast cancer study. The dataset consists of 286 patients that experienced a lymph-node-negative breast cancer between 1980 to 1995 \citep{Wang:2005}.  The event time of interest is the time to distant metastasis (DM). Among the 286 patients, 107 experienced a relapse from breast cancer. As can be seen from Figure \ref{fig:fig5}, the Kaplan-Meier estimator of the survival function shows a large plateau at about $0.60$. Furthermore,
$ 88 \% $ of the censored observations are in the plateau. A cure model seems therefore appropriate for these data. 

\begin{figure}[H]
\centering
\includegraphics[width=0.6\textwidth]{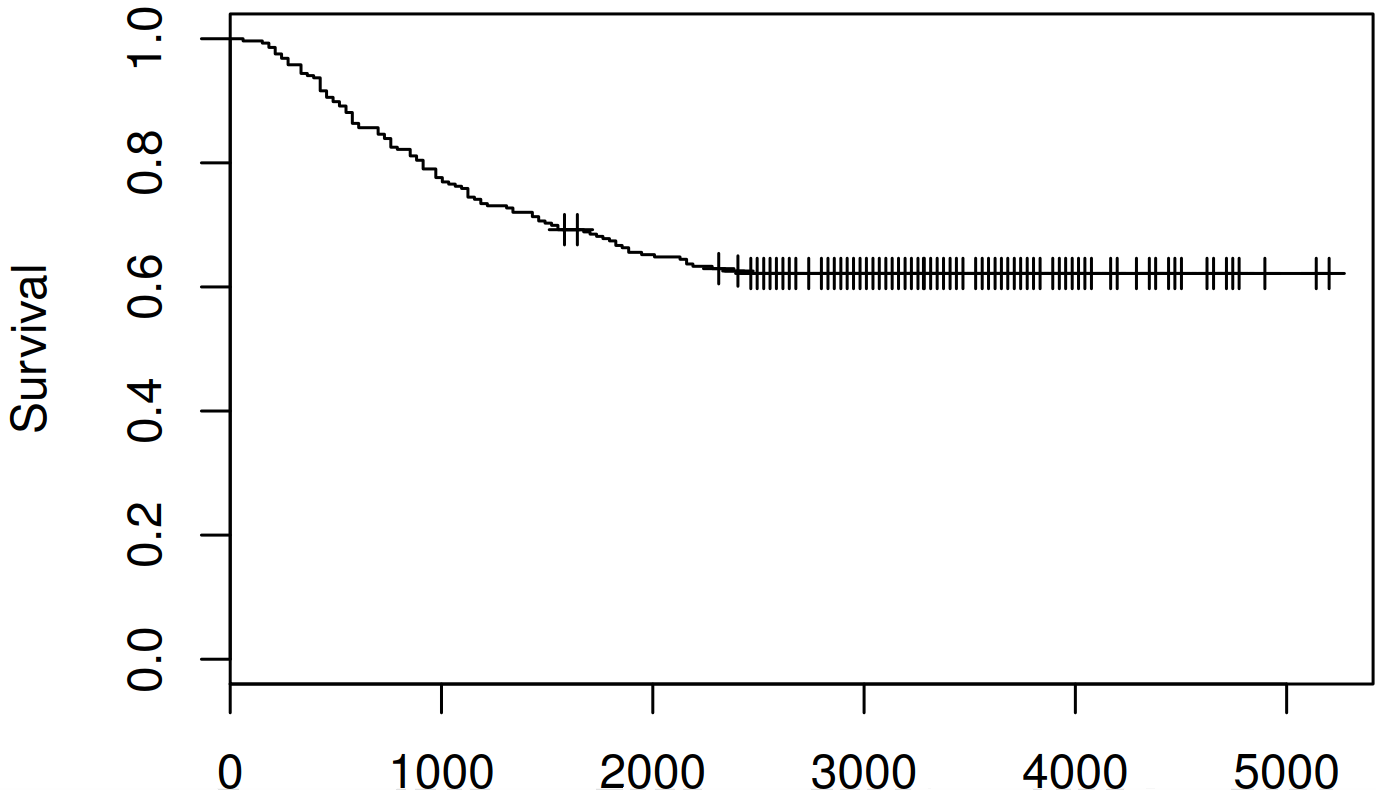}
	\caption{Kaplan-Meier estimator of the survival curve for time to distant metastasis for breast cancer survival data (censored observations are indicated by $+$).}  
	\label{fig:fig5}
\end{figure}

We consider two covariates : the age of the patient (ranges from 26 to 83 with a median of 52 years) and the estrogen receptor (ER) status, which is a binary variable equaling 0 ($ER-$) in the case of less than $10$ fmol per mg protein (77 patients in total) and equaling 1 ($ER+$) when $10$ fmol per mg protein or more (209 patients in total). We analyse the data using the semiparametric model given in (\ref{def:model_simu}) and  we choose the link function $\Gamma(x)$ to be either $x^k$ or $\sin(x^k)$ with $k=1,\ldots,8$. In Table \ref{tab:tab2} we report the values of the obtained profile log-likelihood (PLL) as given by (\ref{NPMLEcoxmodel_gamma_Lambda}), and the obtained full log-likelihood (FLL) as given by (\ref{maximumlikelihood}). Based on this result we can conclude that, in terms of the likelihood, the model that fits best these data is the model with the sine function and $k=4$.

\begin{table}[H]
	\centering
	\begin{tabular}{c|c|c|c|c|c|c|c|c|c|c|c}
		%\hline
		\multicolumn{6}{c|}{$\Gamma(x)=x^k$} & \multicolumn{6}{c}{$\Gamma(x)=\sin(x^k)$} \\ \hline
		\multicolumn{3}{c|}{$k$ odd} & \multicolumn{3}{c|}{$k$ even} & \multicolumn{3}{c|}{$k$ odd} & \multicolumn{3}{c}{$k$ even} \\ \hline
		$k$ & PLL             & FLL             & $k$  & PLL               & FLL     & $k$ & PLL             & FLL             & $k$  & PLL               & FLL             \\ \hline
		1   & 25.0            & -687.1           & 2   & 25.9            & -686.2  & 1 & 25.3              & -686.8           & 2 & 26.6              & -685.5              \\ %\hline
		3   & 25.2            & -686.9           & 4   & 25.9            & -686.2  & 3 & 25.2              & -686.9           & 4 & 28.1              & -684.0      \\ %\hline
		5   & 25.4            & -686.7           & 6   & 26.1            & -686.0  & 5 & 25.4              & -686.7           & 6 & 24.1              & -688.0   \\ %\hline
		7   & 25.6            & -686.5           & 8   & 25.3            & -686.8  & 7 & 25.5              & -686.6           & 8 & 25.8              & -686.3     \\ %\hline
	\end{tabular}
	\caption{The profile log-likelihood (PLL) and the full log-likelihood (FLL) for different link functions.}
\label{tab:tab2}
\end{table}

\begin{appendices}

This Appendix is dedicated to the proofs of the mathematical results of the paper. In Section \ref{app_A_propositions} we give the proofs of the $4$ propositions of the paper, stated in Section \ref{s3}. In these proofs we rely on some important statements, enumerated with the letter B, whose proofs are given in Section \ref{app_B_statements}. The technical results on empirical processes are all postponed to Section \ref{app_C_emp_proc}.

\counterwithin{theorem}{section}

\section{Proofs of the propositions}\label{app_A_propositions}

\subsection{Proof of Proposition \ref{proposition:p2_p3}}

%When $\eta=\exp$ and $g(\gamma,x)=\exp(\gamma^Tx)$, both models $\mathcal P_2$ and $\mathcal P_3$ are the same. The parameters of $\mathcal P_2$ and $\mathcal P_3$ are in a one-to-one relationship given by $\beta^T= (\log(\theta),\gamma^T) $ and $G=F$. Consequently, both NPMLE's result in maximizing the same likelihood function with different parametrizations: $\beta$ and $(\theta,\gamma) $. A simple variable change leads to the statement. 

In (\ref{newopti1}), write $\beta^T= (\log \theta,\gamma^T)$ with $\gamma\in \R^{d}$ and $\theta\in \R_{>0}$. Hence $\w Q_{2,\beta}(Y_i) = \w Q_\gamma(Y_i)\theta$ and (\ref{newopti1}) becomes 
\begin{align}\label{optimisationgammatheta}
 \underset{\gamma\in \R^{d},\, \theta\in\R} {\text{argmax}}\ \prod_{i=1}^n\left\{ \left(\frac{\exp(\gamma^TX_i) \theta}{\w Q_\gamma(Y_i)\theta-\w \lambda_{(\log\theta,\gamma)}}\right)^{\delta_i}  \exp( -\w\lambda_{(\log\theta,\gamma)} ) \right\}.
\end{align}
For any $\gamma\in \R^{d}$, denote by $\w \theta_\gamma$ the maximizer of (\ref{optimisationgammatheta}) over $\theta$. Hence $\w \theta_\gamma $ maximizes
\begin{align}\label{eq:maxthetagamma}
  \sum_{i=1}^n \left\{- \delta_i \log \left(\frac{\w Q_\gamma(Y_i)\theta-\w \lambda_{(\log\theta,\gamma)}}{\theta}\right) -\w\lambda_{(\log\theta,\gamma)} \right\}.
\end{align}
%is achieved when $\w \lambda_{(\log\theta,\gamma)}=0$. 
Furthermore, as $\w\lambda_{(\log\theta,\gamma)}$ satisfies $\sum_{i=1}^n \delta_i/(\w Q_{\gamma} (Y_i)\theta -\w\lambda_{(\log\theta,\gamma)} )=n$, a concavity argument implies that
\begin{align*}
\w\lambda_{(\log\theta,\gamma)} = \underset{{\lambda\in \mathbb R}} {\argmin}\, \sum_{i=1}^n \left\{ -\delta_i \log\left( \frac{ \w Q_{ \gamma} (Y_i) \theta - \lambda}{\theta}\right) -\lambda\right\}, 
\end{align*}
and, in particular, considering $\lambda=0$ leads to
\begin{align*}
\sum_{i=1}^n \left\{ -\delta_i \log\left( \frac{\w Q_{ \gamma} (Y_i) \theta -\w \lambda_{(\log\theta,\gamma)}}{\theta}\right) -\w \lambda_{(\log\theta,\gamma)}\right\}
 \leq   \sum_{i=1}^n -\delta_i \log\left( \w Q_{ \gamma} (Y_i) \right),
\end{align*}
for any $(\theta,\gamma)\in \mathbb R_{\geq 0}\times \mathbb R^{d} $. This inequality holds for $\theta = \w \theta_\gamma$ and it provides an upper bound for (\ref{eq:maxthetagamma}). This upper bound is achieved when $\theta$ is such that 
%\begin{align*}
% \underset{\theta\in \R} {\text{max}}\ n^{-1}\sum_{i=1}^n -{\delta_i}\log \left(\w Q_\gamma(Y_i)-\frac{\w \lambda_{(\log\theta,\gamma)}}{\theta}\right)  -\w \lambda_{(\log\theta,\gamma)} .
%\end{align*}
%Taking the derivatives with respect to $\theta$ gives the first order conditions
%\begin{align*}
% n^{-1}\sum_{i=1}^n {\delta_i}\left(\w Q_\gamma(Y_i)-\frac{\w \lambda_{(\log\theta,\gamma)}}{\theta}\right)^{-1}\left( \partial _\theta \w \lambda_{(\log\theta,\gamma)}-\frac{\w \lambda_{(\log\theta,\gamma)}}{\w \theta_\gamma}\right) -\partial _\theta\w \lambda_{(\log\theta,\gamma)}=0 ,
%\end{align*}
$\w \lambda_{(\log\theta,\gamma)}=0$, equivalently when $\w \theta_\gamma = n^{-1} \sum_{i=1}^n \delta_i/\w Q_\gamma(Y_i)$. Injecting this value in (\ref{optimisationgammatheta}) we obtain the assertion of the proposition.
%As given in \cite{portier+e+v:2017}, 
%\begin{align}\label{maximumlikelihoodP2}
% (\w \beta ,\w G) = \underset{\beta, \ G}{\text{argmax}} 
%  \  \prod_{i=1}^n (\eta(\beta^TX_i) G\{Y_i\})^{\delta_i}\  \exp(-\eta(\beta^TX_i) (\Delta_i G(Y_i)+ (1-\Delta_i)G(+\infty) )), 
%\end{align}
\qed

\subsection{Proof of Proposition \ref{propositionconsistency}}

For every $y\in \mathbb R_{\geq 0}$, $\gamma\in  \mathbb R ^d$, write $Q_{\gamma}(y) = E[g(\gamma,X)R(y)]$.
Since the function $\gamma\mapsto E[\delta \log(g(\gamma, X)/Q_\gamma(Y) )] $ is continuous on $B$ and has a unique maximum (see the Lemma below), it suffices to show that \cite[Theorem 2.1]{newey1994}
\begin{align}\label{statement1}
\sup_{\gamma \in B} \left|n^{-1} \sum_{i=1} ^ n \delta_i\{\log(g(\gamma,X_i))-\log(\w Q_{\gamma}(Y_i))\}- E\Big[\delta\{\log(g(\gamma,X))-\log(Q_{\gamma}(Y))\}\Big]\right|  \overset{\mathbb P}{\lr} 0.\tag{B.1}
\end{align}  
This is shown in Section \ref{app_B_statements}.
\qed

\begin{lemma}\label{prop:identifiability_profile} 
\begin{enumerate}[(i)]
\item\label{lemma:consistency:_i} Under (H\ref{cond:identification1}) and (H\ref{cond:identification2}), the function $\gamma \mapsto  E[\delta\log(g(\gamma,X) / Q_\gamma (Y)) ]$ has a unique maximum $\gamma_0$. 
\item\label{lemma:consistency:_ii}  Under (H\ref{cond:consistency_gamma_0}) and (H\ref{cond:consistency_gamma_1}), the function $\gamma\mapsto E[\delta \log(g(\gamma, X)/Q_\gamma(Y) )] $ is continuous on $B$.
\end{enumerate}
\end{lemma}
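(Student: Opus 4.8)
The plan is to treat the two assertions separately. Part (\ref{lemma:consistency:_ii}) is a routine dominated-convergence argument, whereas the crux is the identification in part (\ref{lemma:consistency:_i}), which I would establish through a Kullback--Leibler--type comparison built on the martingale identity (\ref{formula:martingale}).

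For part (\ref{lemma:consistency:_i}), write $\phi(\gamma) = E[\delta\log(g(\gamma,X)/Q_\gamma(Y))]$. The first step is to apply (\ref{formula:martingale}) with $h(u,x) = \log(g(\gamma,x)/Q_\gamma(u))$ to turn $\phi$ into an integral against $d\Lambda_0$, namely $\phi(\gamma) = \int E[\log(g(\gamma,X)/Q_\gamma(u))\, g(\gamma_0,X) R(u)]\, d\Lambda_0(u)$. Subtracting the same expression evaluated at $\gamma_0$ and, for each $u$ with $Q_{\gamma_0}(u)>0$, introducing the tilted expectation $E_u[\psi(X)] = E[\psi(X) g(\gamma_0,X) R(u)]/Q_{\gamma_0}(u)$, I would arrive at
$$\phi(\gamma) - \phi(\gamma_0) = \int Q_{\gamma_0}(u)\bigl(E_u[\log W] - \log E_u[W]\bigr)\, d\Lambda_0(u),$$
where $W = g(\gamma,X)/g(\gamma_0,X)$ and I have used that $E_u[W] = Q_\gamma(u)/Q_{\gamma_0}(u)$, which is immediate from the definition of $Q$. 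Concavity of the logarithm (Jensen's inequality applied under each probability measure $E_u$) makes every integrand nonpositive, so $\gamma_0$ is a maximizer. Uniqueness is where the hypotheses really enter: the equality $\phi(\gamma)=\phi(\gamma_0)$ forces the Jensen gap to vanish for $\Lambda_0$-almost every $u$, hence $W$ to be $E_u$-almost surely constant. The key observation is that each tilting measure is equivalent to the law of $X$: since $R(u)\geq 1-\Delta$, one has $E[R(u)\mid X]\geq P(Y>\tau\mid X)>0$ a.s. by (H\ref{cond:identification1}), and together with $g(\gamma_0,\cdot)>0$ this shows that $E_u[\cdot]$ and the law of $X$ share the same null sets. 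Therefore $W$, and hence $g(\gamma_0,X)/g(\gamma,X)$, is constant almost surely, i.e. $\var(g(\gamma_0,X)/g(\gamma,X))=0$, and (H\ref{cond:identification2}) yields $\gamma=\gamma_0$.

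For part (\ref{lemma:consistency:_ii}) I would split $\phi(\gamma) = E[\delta\log g(\gamma,X)] - E[\delta\log Q_\gamma(Y)]$ and apply dominated convergence to each term along an arbitrary sequence $\gamma_n\to\gamma$ in $B$. Continuity of the integrands in $\gamma$ comes from the Lipschitz bound (\ref{eq:mean_value_th1}), which also gives $|Q_\gamma(y)-Q_{\tilde\gamma}(y)|\leq |\gamma-\tilde\gamma|_1 E[c_1(X)]$ uniformly in $y$; domination comes from the envelope conditions in (H\ref{cond:consistency_gamma_1}): $\delta|\log g(\gamma,X)|$ is bounded by $|\log m_1(X)|+|\log M_1(X)|$, while $Q_\gamma(y)$ is squeezed between the positive constant $E[m_1(X)(1-\Delta)]$ and $E[M_1(X)]$, so $\delta|\log Q_\gamma(Y)|$ is uniformly bounded. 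The only point needing the model structure is the strict positivity of the lower bound, which again follows from $P(Y>\tau\mid X)>0$ a.s.

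The main obstacle is the uniqueness step in part (\ref{lemma:consistency:_i}): one must recognize the comparison as a Jensen/Kullback--Leibler gap and, above all, verify that the tilted measures $E_u$ do not lose covariate mass, so that the pointwise constancy of $W$ can be promoted to the global variance condition feeding (H\ref{cond:identification2}). Everything else is bookkeeping with the integrability envelopes of (H\ref{cond:consistency_gamma_1}).
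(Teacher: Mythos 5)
Your proof is correct, and on part (\ref{lemma:consistency:_ii}) it is the paper's own argument: the same splitting into $E[\delta\log g(\gamma,X)]$ and $E[\delta\log Q_\gamma(Y)]$, dominated convergence with the envelope $|\log m_1(X)|+|\log M_1(X)|$, and the squeeze $E[m_1(X)(1-\Delta)]\le Q_\gamma(y)\le E[M_1(X)]$, whose strict positivity comes from (H\ref{cond:identification1}). On part (\ref{lemma:consistency:_i}) you take a genuinely different route at the key concavity step, even though both proofs rest on the same two pillars: the identity (\ref{formula:martingale}) and the a.s.\ positivity of $g(\gamma_0,X)E[R(u)\mid X]$, obtained from $R(u)\ge 1-\Delta$ and (H\ref{cond:identification1}). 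The paper never forms tilted measures. Writing $x_Y=\{g(\gamma,X)/g(\gamma_0,X)\}\{Q_{\gamma_0}(Y)/Q_\gamma(Y)\}$, it applies (\ref{formula:martingale}) to the \emph{linear} term to get $E[\delta(x_Y-1)]=0$, and then invokes the quantitative pointwise inequality $\log x-(x-1)\le-\ell(x)$ of \cite{murphy1994}, where $\ell\ge 0$ vanishes only at $x=1$; equality of the two objective values then forces $\int E[\ell(x_u)\,g(\gamma_0,X)R(u)]\,d\Lambda_0(u)=0$, hence $x_u=1$ a.s., hence $g(\gamma,X)/g(\gamma_0,X)$ is a.s.\ constant and (H\ref{cond:identification2}) applies. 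You instead push the entire objective through (\ref{formula:martingale}) and read the difference as an integrated Jensen (Kullback--Leibler) gap under your tilted laws $E_u$, concluding via the equality case of Jensen together with the equivalence of $E_u$ with the law of $X$ --- which is exactly the paper's positivity fact in disguise. Each packaging buys something. Yours is more conceptual and makes the KL structure of the identification transparent. The paper's $\ell$-bound is quantitative: it lower-bounds the amount by which the objective at $\gamma_0$ exceeds its value at $\gamma$, rather than merely showing strict inequality (useful if one later wants well-separation of the maximum), and it bypasses the equality case of Jensen, which tacitly requires $E_u[\log W]$ to be finite for $\Lambda_0$-a.e.\ $u$, with $W=g(\gamma,X)/g(\gamma_0,X)$; this does hold (since $\log W\le W-1$ and $E_u[W]=Q_\gamma(u)/Q_{\gamma_0}(u)<\infty$ give $E_u[\log W]\in[-\infty,\infty)$, and a vanishing gap rules out $-\infty$), but your write-up passes over it silently.
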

\begin{proof}
We start with (\ref{lemma:consistency:_i}). Using (\ref{formula:martingale}) and that $Q_{\gamma} = E [ g(\gamma,X) R(y)] $, we get
\begin{align*}
 E\left[\delta \left(\frac{g(\gamma,X) }{ g(\gamma_0,X) } \frac{ Q_{\gamma_0} (Y)} {Q_{\gamma} (Y)} - 1 \right)  \right] = \int \left( E\left[ {g(\gamma,X) } \frac {Q_{\gamma_0} (u)}{ Q_\gamma (u)}  R(u) \right] - Q_{\gamma_0} (u) \right)d\Lambda_0(u) = 0.
\end{align*}
Since there exist $\eta,\eta'>0$ such that \citep{murphy1994}
\begin{align*}
&\log(x) - (x-1) \leq -\ell(x),\\
&\ell(x) = \eta  |x-1|\mathds 1_{\{|x-1|\geq 1/2\}} + \eta' (x-1)^2\mathds 1_{\{|x-1|< 1/2\}},
\end{align*}
it follows that
\begin{align*}
  E\left[\delta \log\left(\frac{g(\gamma,X) }{Q_{\gamma} (Y)}\right)  \right]   -   E\left[\delta \log\left( \frac { g(\gamma_0,X) } { Q_{\gamma_0} (Y)}   \right) \right] \leq -  E\left[\delta \ell \left( \frac{g(\gamma,X) }{ g(\gamma_0,X) } \frac{ Q_{\gamma_0} (Y)} {Q_{\gamma} (Y)} \right)   \right].
\end{align*}
Consequently, using (\ref{formula:martingale}), whenever $  E\left[\delta \log\left(\frac{g(\gamma,X) }{Q_{\gamma} (Y)}\right)  \right]   =    E\left[\delta \log\left( \frac { g(\gamma_0,X) } { Q_{\gamma_0} (Y)}   \right) \right] $, it holds that
\begin{align*}
\int   E  \left[ \ell \left( \frac{g(\gamma,X) }{ g(\gamma_0,X) } \frac{ Q_{\gamma_0} (u)} {Q_{\gamma} (u)}  \right)  g(\gamma_0,X)R(u) \right] d\Lambda_0 (u) = 0.
\end{align*}
For $(d\Lambda_0)$-almost every $u$, we have
\begin{align*}
 E \left[ \ell \left( \frac{g(\gamma,X) }{ g(\gamma_0,X) } \frac{ Q_{\gamma_0} (u)} {Q_{\gamma} (u)} \right)  g(\gamma_0,X)R(u) \right] = 0. 
\end{align*}
But by (H\ref{cond:identification1}), it holds, almost surely, $ \inf_{u\in [0,\tau]}  g(\gamma_0,X)  E [ R(u)|X] =  g(\gamma_0,X)   E [ R(\tau )|X] >0 $, which implies that
almost surely $({g(\gamma,X) }/{ g(\gamma_0,X) })({ Q_{\gamma_0} (u)} /{Q_{\gamma} (u)}) = 1 $.\\ Hence ${g(\gamma,X) }/{ g(\gamma_0,X) } $ is constant and we conclude using (H\ref{cond:identification2}).

We continue with (\ref{lemma:consistency:_ii}). We proceed in two parts. We first consider the function $\gamma\mapsto E[\delta \log(g(\gamma, X))] $ and second we deal with $\gamma\mapsto E[\delta \log(Q_\gamma(Y) )] $. Because $\delta$ is bounded, it suffices to show that
%\begin{align*}
$\int \left| \log(g(\gamma, x)/g(\tilde \gamma , x))\right|  dP(x)\overset{\gamma \rightarrow \tilde \gamma}{ \lr } 0$.
%\end{align*}
We apply the Lebesgue dominated convergence theorem. For every $x\in \mathcal S$, the continuity of the function $g(\gamma, x)$ at $\tilde \gamma$ and the fact that $g(\gamma, x)$ is bounded from below implies that $| \log(g(\gamma, x)/g(\tilde \gamma , x))| \rightarrow 0 $ whenever $\gamma\rightarrow \tilde \gamma$. By (H\ref{cond:consistency_gamma_1}), we also have that
\begin{align*}
| \log(g(\gamma, x)/g(\tilde \gamma , x))|  \leq |\log(m_1(x))| + |\log( M_1(x) ) |,
\end{align*}
which is $(dP)$-integrable. To obtain that
%\begin{align*}
$\int \left| \log(Q_\gamma(y)/Q_{\tilde \gamma}(y))\right|  dP(y)\overset{\gamma \rightarrow \tilde \gamma}{ \lr } 0$,
%\end{align*}
we can follow the same path as before by applying the Lebesgue dominated convergence theorem. We have that, for every $\gamma\in B$,
\begin{align}\label{eq:bound_upper_lower}
E[m_1(X)  (1-\Delta)] \leq Q_\gamma (y ) \leq E[M_1(X)].
\end{align} 
The continuity of the function $g(\gamma, x)$ at $\tilde \gamma$ implies the continuity of $\gamma \rightarrow Q_\gamma(y)$ for every $y\in\mathbb R_{\geq 0}$ (by another application of the Lebesgue dominated convergence theorem), which gives, with the help of (\ref{eq:bound_upper_lower}), that $| \log(Q_\gamma( y)/Q_{\tilde \gamma}(y))| \rightarrow 0 $ whenever $\gamma\rightarrow \tilde \gamma$. It remains to note that
%\begin{align*}
$| \log(Q_\gamma( y)/Q_{\tilde \gamma}(y))|\leq |\log(E[m_1(X) (1-\Delta)])| + |\log( E[M_1(X)] ) |<+\infty$.
%\end{align*} 
\end{proof}

\subsection{Proof of Proposition \ref{proposition:weak_cv_gamma}}
For every $\gamma\in B$, define $\w h_\gamma(y)= {\nabla_{\gamma} \w Q_\gamma(y)}/{\w Q_\gamma(y)}$.
It is worth mentioning that, for every $\gamma\in B$,
\begin{align}\label{eq:importantidentity}
n^{-1}\sum_{i=1} ^n \int d_\gamma (X_i)  g(\gamma,X_i) R_i(u) \, d\Lambda_0(u) &= \int \nabla_\gamma \w Q_\gamma(u)d\Lambda_0(u) \nonumber \\
&= \int \w h_\gamma(u) \w Q_\gamma(u)d\Lambda_0(u) \nonumber\\
&=n^{-1}\sum_{i=1} ^n  \int \w h_\gamma(u) g(\gamma,X_i)R_i(u) d\Lambda_0(u).
\end{align}
As by (H\ref{cond:asymptotics_gamma_2}), $\gamma \mapsto g(\gamma, x) $ is differentiable, $\w\gamma$ satisfies the equation $S_n(\w \gamma)= 0$, where
\begin{align*}
S_n (\gamma) = n^{-1}\sum_{i=1}^n \int \{d_\gamma (X_i) - \w h_\gamma(u) \} dN_i(u) .
\end{align*}
We rely on the following decomposition. Based on (\ref{eq:importantidentity}), for every $\gamma\in B$,
\begin{align*}
S_n (\gamma) &=  n^{-1}\sum_{i=1}^n \int \{d_\gamma (X_i) - \w h_\gamma(u)\} (dN_i(u) -g(\gamma, X_i ) R_i(u) d\Lambda_0(u)) \\
&=  n^{-1}\sum_{i=1}^n \int \{d_\gamma (X_i) - \w h_\gamma(u)\} dM_i(u) \\
&\qquad\qquad +  n^{-1}\sum_{i=1}^n \int \{d_\gamma (X_i) - \w h_\gamma(u)\}(g(\gamma_0, X_i ) -g(\gamma, X_i ))  R_i(u) d\Lambda_0(u).
\end{align*}
Applying this for $\gamma\in B$ and for $\gamma_0$ implies that
\begin{align*}
S_n(\gamma) - S_n(\gamma_0) 
&= n^{-1}\sum_{i=1}^n \int \{d_\gamma (X_i) - \w h_\gamma(u)\}(g(\gamma_0, X_i ) -g(\gamma, X_i ))  R_i(u) d\Lambda_0(u) + r_{1,n}(\gamma) ,
\end{align*}
with 
%\begin{align*}
$r_{1,n}(\gamma) = n^{-1}\sum_{i=1}^n \int \{d_\gamma (X_i)-d_0(X_i) +\w h_0(u) - \w h_\gamma(u)\} dM_i(u)$.
%\end{align*}
Taking $\gamma = \w \gamma$, for which $S_n(\w \gamma)=0$, and using the mean-value theorem around the value $\gamma_0$ with the map
\begin{align*}
 \gamma \mapsto n^{-1}\sum_{i=1}^n \int \{d_{\widehat\gamma} (X_i) - \w h_{\widehat \gamma}(u)\} g( \gamma, X_i )  R_i(u) d\Lambda_0(u),
\end{align*}
which is continuously differentiable by (H\ref{cond:asymptotics_gamma_2}), gives
%\begin{align*}
$-S_n(\gamma_0) =  - H_n (\tilde \gamma) (\w \gamma-\gamma_0)+r_{1,n}(\widehat \gamma)$,
%\end{align*}
with $\tilde \gamma$ on the line segment between $ \w\gamma$ and  $\gamma_0$, and 
\begin{align*}
 H_n (\tilde \gamma) = n^{-1}\sum_{i=1}^n \int \{d_{\widehat\gamma} (X_i) - \w h_{\widehat \gamma}(u)\}\nabla_\gamma g(\tilde \gamma, X_i )^T   R_i(u) d\Lambda_0(u)  .
\end{align*}
%$c_\gamma (X_i) = \frac{\nabla _\gamma^2 g(\gamma,X_i)g(\gamma,X_i)-\nabla_\gamma g (\gamma,X_i)^{\otimes 2}}{g(\gamma,X_i)^2}$ and $\w h_\gamma(y) = \frac{\nabla_{\gamma}^2 \w Q_\gamma(y) \w Q_\gamma(y)-\nabla_{\gamma} \w Q_\gamma(y)^{\otimes 2} }{\w Q_\gamma(y)^2} $. 
We show in Section \ref{app_B_statements} that 
\begin{align}\label{statement2}
&H_n (\tilde \gamma) \overset{\mathbb P}{\lr} I_0,\tag{B.2}\\
&n^{1/2} r_{1,n}(\widehat \gamma) \overset{\mathbb P}{\lr}0 .\label{statement2.0}\tag{B.3}
\end{align}
Because the matrix $I_0$ has full rank by (H\ref{cond:asymptotics_gamma_1}), we know from (\ref{statement2}) that with probability tending to $1$, $H_n (\tilde \gamma)$ is invertible. Then using (\ref{statement2.0}) gives that
%\begin{align*}
$n^{1/2} (\w \gamma-\gamma_0) =  H_n ( \tilde \gamma)^{-1} \{ n^{1/2}  S_n(\gamma_0)\} + o_{\mathbb P}(1)$,
%\end{align*} 
hence it remains to show that (see Section \ref{app_B_statements})
\begin{align}\label{statement2prime}
n^{1/2} S_n(\gamma_0) = n^{-1/2}\sum_{i=1} ^n \int (d_0(X_i) -  h_0(u))dM_i(u) + o_{\mathbb P}(1),\tag{B.4}
\end{align}
to deduce the statement.  
%To obtain the convergence in distribution, one can use Lemma \ref{lemma:weakcv} combined with (\ref{formula:quadratic_variation}), 

\qed

\subsection{Proof of Proposition \ref{proposition:strong_decomp_Lambda}}
For every $y\in \mathbb R_{\geq 0}$, write
\begin{eqnarray*}
n^{1/2}(\w \Lambda(y)-\Lambda_0(y))
&=&
n^{-1/2} \sum_{i=1}^n \int_0^y \frac{1}{\w Q_{\w \gamma}(u)} (dN_i(u)-\w Q_{\w \gamma}(u)d\Lambda_0(u)) \\
&=& n^{-1/2} \sum_{i=1}^n \int_0^y \frac{dM_i(u)}{\w Q_{\w \gamma}(u)} +   n^{1/2} \int_0^y \left(\frac{ \w Q_{ \gamma_0}(u) -  \w Q_{\w \gamma}(u) }{\w Q_{\w \gamma}(u)} \right) d\Lambda_0(u) \\
&=& n^{-1/2} \sum_{i=1}^n \int_0^y \frac{dM_i(u)}{\w Q_{\w \gamma}(u)} -  n^{1/2}  (\w \gamma-\gamma_0) \int_0^y \left(\frac{  \nabla_\gamma \w Q_{ \tilde \gamma}(u) }{\w Q_{\w \gamma}(u)} \right) d\Lambda_0(u),
\end{eqnarray*}
 for some $\tilde \gamma$ belonging to the line segment between $ \w\gamma$ and  $\gamma_0$.
As shown in Section \ref{app_B_statements},
% Equation 
%(\ref{equationremainderLambdadecomp1}), (\ref{equationremainderLambdadecomp2}), (\ref{equationremainderLambdadecomp3}) in 
\begin{align}\label{statement3}
&\sup_{y\in \mathbb R_{\geq 0} } \left| n^{-1/2} \sum_{i=1}^n \int_0^y \left( \frac{1}{\w Q_{\w \gamma}(u)} -\frac{1}{ Q_{0}(u)}  \right) dM_i(u)\right| =o_{\mathbb P}(1),\tag{B.5}
\end{align}
and since, from Lemma \ref{Lemma:probacv},
\begin{align*}
& \sup_{u\in \R_{\geq 0}}   \left|\frac{  \nabla_\gamma \w Q_{ \tilde \gamma}(u) }{\w Q_{\w \gamma}(u)}- h_0(u) \right|     = o_{\mathbb P}(1),
\end{align*}
the result follows.
%Using Lemma \ref{lemma:weakcv}, we obtain the asymptotic independence and the weak convergence.
\qed

%%%%%%%%%%%%%%%%%%%%%%%%%%%%%%%%%%%%%%%%%%%%%%%%%%%%%%%%%%%%%%%%%%%%%%
\section{Proof of the auxiliary statements (\ref{statement1}) to (\ref{statement3})} \label{app_B_statements}

\paragraph{Proof of (\ref{statement1}):}
First, we deal with the terms of the form $ \delta  \log(g(\gamma,x)) $. From Lemma \ref{Lemma:GCclass} assertion (\ref{GC:consistency1}), the underlying class indexed by $\gamma \in B$, is Glivenko-Cantelli. It follows that
\begin{align*}
\sup_{\gamma \in B}\left |n^{-1} \sum_{i=1} ^ n \delta_i \log(g(\gamma,X_i))- E[\delta\log(g(\gamma,X))] \right|  \overset{\mathbb P}{\lr} 0.
\end{align*}
Second, %by Lemma \ref{Lemma:probacv}, equation (\ref{convergence:12}), 
with probability going to $1$, we have that (with $ b =  E[m_1(X)(1-\Delta) ]/2$)
\begin{align*}
&\sup_{\gamma \in B} \left |n^{-1} \sum_{i=1} ^ n \delta_i \log(\w Q_{\gamma}(Y_i))- E[\delta\log(Q_{\gamma}(Y))] \right|  \\
&\leq \sup_{\gamma \in B} \left |n^{-1} \sum_{i=1} ^ n\delta_i\{ \log(\w Q_{\gamma}(Y_i))- \log(Q_{\gamma}(Y_i))\}\right | +\sup_{\gamma \in B}\left |n^{-1} \sum_{i=1} ^ n \delta_i\log( Q_{\gamma}(Y_i))- E[\delta\log(Q_{\gamma}(Y))]\right | \\
&\leq  2 b^{-1} \sup_{\gamma \in B,\, y\in \R_{\geq 0} }\left | \w Q_{\gamma}(y)- Q_{\gamma}(y)\right | +\sup_{\gamma \in B}\left |n^{-1} \sum_{i=1} ^ n\delta_i \log( Q_{\gamma}(Y_i))- E[\delta\log(Q_{\gamma}(Y))]\right |,
\end{align*}  
which follows from the mean-value theorem applied to $x\mapsto \log(x)$, and where the bound, in probability, is given in (\ref{convergence:12}) of Lemma \ref{Lemma:probacv}. Convergence of the first term above is then implied by Lemma \ref{Lemma:probacv}, equation (\ref{convergence:11}). Convergence of the second term above is deduced from Lemma \ref{Lemma:GCclass}, assertion (\ref{GC:consistency2}).  \qed

\paragraph{Proof of (\ref{statement2}):}
We show that for any random sequences $\gamma_n$ and $\tilde \gamma_n$ going to $ \gamma_0$, in $\mathbb P$-probability, we have
\begin{align*}
 n^{-1}\sum_{i=1}^n \int \{d_{\gamma_n} (X_i) - \w h_{\gamma_n}(u)\}\nabla_\gamma g(\tilde \gamma_n, X_i )^T   R_i(u) d\Lambda_0(u)  \overset{\mathbb P}{\lr} I_0.
\end{align*}
Some basic algebra implies that, for any bounded function $h$, 
\begin{align*}
\int  E \left[ \{d_0 (X) -  h_0(u)\} h(u )  g(\gamma_0,X)  R(u)\right] d\Lambda_0(u)=0.
\end{align*}
From the previous with $h=h_0$, we deduce that
\begin{align*}
I_0 =  \int E\left[  \{d_{0} (X) - h_{0}(u)\} d_{0} (X)^T   g(\gamma_0, X)   R(u) \right] d\Lambda_0(u) ,
\end{align*}
hence, we have to prove that
\begin{align*}
 \int  n^{-1}\sum_{i=1}^n  \left [ \{d_{\gamma_n} (X_i) - \w h_{\gamma_n}(u)\} \nabla_\gamma g(\tilde \gamma_n, X_i )^T   R_i(u)\right] d\Lambda_0(u)
 \\
   \overset{\mathbb P}{\lr} \int E\left[  \{d_{0} (X) - h_{0}(u)\} \nabla_\gamma g(\gamma_0, X)^T   R(u) \right] d\Lambda_0(u) .
\end{align*}
%It is enough to prove that
%\begin{align*}
%\sup_{u\in\R_{\geq 0}} \left| n^{-1}\sum_{i=1}^n  \{d_{\gamma_n} (X_i) - \w h_{\gamma_n}^*(u)\}\nabla_\gamma g(\tilde \gamma_n, X_i )^T   R_i(u) - E\left[  \{d_{0} (X_i) - h_{0}^*(u)\} \nabla_\gamma g(\gamma_0, X_i )^T   R_i(u) \right] \right|  \overset{p}{\lr} 0.
%\end{align*}
From the triangle inequality, defining $ a(Y_i)= \int R_i(u ) d\Lambda_0(u) $, it is enough to prove that
\begin{align*}
& \left| n^{-1}\sum_{i=1}^n  d_{\gamma_n} (X_i)  \nabla_\gamma g(\tilde \gamma_n, X_i )^T   a(Y_i) - E\left[  d_{0} (X)  \nabla_\gamma g(\gamma_0, X )^T   a(Y) \right] \right|  \overset{\mathbb P}{\lr} 0,\\
&\sup_{y\in\R_{\geq 0}} \left|      \w h_{\gamma_n}(y) \nabla_\gamma \w  Q_{\tilde \gamma_n} (y)^T -    h_{0}(y) \nabla_\gamma Q_{0} (y)^T    \right|  \overset{\mathbb P}{\lr} 0.
\end{align*}
From Lemma \ref{Lemma:GCclass}, the functions $(x,y) \mapsto d_{\gamma} (x) \nabla_\gamma g(\tilde \gamma, x )^T   a(y)$, with $\gamma$ and $\tilde \gamma$ in $B$, are included in a Glivenko-Cantelli class. Hence,
\begin{align*}
\sup_{\gamma\in B,\, \tilde \gamma\in B} \left| n^{-1}\sum_{i=1}^n  d_{\gamma} (X_i)  \nabla_\gamma g(\tilde \gamma, X_i )^T   a(Y_i) - E\left[  d_{\gamma} (X)  \nabla_\gamma g(\tilde \gamma, X)^T   a(Y)\right] \right|  \overset{\mathbb P}{\lr} 0.
\end{align*} 
Hence, the first convergence is derived from the continuity of the map 
$(\gamma, \tilde \gamma) \mapsto E[  d_{\gamma} (X) \linebreak  \nabla_\gamma g(\tilde \gamma, X )^T   a(Y)]$.  
This is implied by (H\ref{cond:consistency_gamma_1}) and (H\ref{cond:asymptotics_gamma_2}) invoking the continuity of $\gamma \mapsto  g( \gamma, x)$ and $\gamma \mapsto \nabla_\gamma g( \gamma, x)$, for every  $x\in \mathcal S$ and the Lebesgue dominated convergence theorem. The second convergence is a direct consequence of Lemma \ref{Lemma:probacv}, (\ref{convergence:12}), (\ref{convergence:13}) and (\ref{convergence:22}). 
\qed

\paragraph{Proof of (\ref{statement2.0}):}
We proceed in two steps. First, we show that, for any sequence $\gamma_n$ going to $0$, in $\mathbb P$-probability,
\begin{align*}
 n^{-1}\sum_{i=1}^n  \int \{d_{\gamma_n} (X_i)-d_0(X_i)\} dM_i(u)  =o_{\mathbb P}(n^{-1/2}).
\end{align*}
We apply Lemma \ref{Lemma:equicontinuity1} to obtain the previous convergence coordinate by coordinate. For $j\in\{1,\ldots, q\}$, with probability $1$, the function $d_{\w \gamma,j}$ belongs to the class $\{x\mapsto d_{\gamma,j} (x) \,:\, \gamma\in B\}$ which, by Lemma \ref{lemma:dgamma_donsker&hboundedvariation}, satisfies (\ref{cond:uniform_entropy}). By (H\ref{cond:consistency_gamma_0}) and (H\ref{cond:asymptotics_gamma_2}), there exists some constant $C>0$ such that the envelop $L$, given in Lemma \ref{lemma:dgamma_donsker&hboundedvariation}, satisfies
\begin{align*}
&E[L^2(X) g(\gamma_0,X) ] \\
&< C \left( E\left[ \frac{(c_2(X)+M_2(X) )^2 M_1(X)}{m_1^2(X)}   \right]   +E\left[ \frac{M_2^2(X) (c_1(X)+ M_1(X))^2M_1(X)}{m_1^4(X)}    \right]
 \right)<+\infty .
\end{align*}
 Moreover from (\ref{ineq:dgamma_lipschitz}) and (\ref{eq:mean_value_th2}), we find that
%\begin{align*}
$E[\{d_{ \gamma_n,j} (X)-d_{\gamma_0,j}(X)\} g(\gamma_0,X)]\leq c_1 | \gamma_n-\gamma_0|$,
%\end{align*} 
which goes to $0$ in $\mathbb P$-probability.

Second, we prove that, for any sequence $\gamma_n$ going to $0$, in $\mathbb P$-probability,
\begin{align}\label{statement2.0.1}
 n^{-1}\sum_{i=1}^n \int \left\{\w h_{0}(u) - \w h_{ \gamma_n}(u)\right\}  d M_i(u)=o_{\mathbb P}(n^{-1/2}).
\end{align}
Let $j\in\{1,\ldots, q\}$.  Then, by Lemma \ref{lemma:dgamma_donsker&hboundedvariation}, $\widehat h_{\gamma_n,j} \in \text{BV} (m,v) $ with probability going to $1$, and by Lemma \ref{Lemma:probacv}, $\sup_{u\in \mathbb R_{\geq 0}} |h_{\gamma_0,j}(u) - \w h_{\gamma_n,j}(u)|\overset{\mathbb P}{\rightarrow }0$.  Hence, the result follows from Lemma \ref{Lemma:equicontinuity2}.
\qed

\paragraph{Proof of (\ref{statement2prime}):}
From identity (\ref{eq:importantidentity}) with $\gamma = \gamma_0$, we have
\begin{align*}
S_n(\gamma_0) &= n^{-1}\sum_{i=1} ^n \int (d_0(X_i) - \w h_0(u))dN_i(u)\\
&= n^{-1}\sum_{i=1} ^n \int ( d_0(X_i) - \w h_0(u))dM_i(u).
\end{align*} 
Using (\ref{statement2.0.1}) with $\gamma_n = \gamma_0$, gives
%\begin{align*}
$n^{-1}\sum_{i=1}^n \int \left\{h_0(u) - \w h_{0}(u)\right\}  d M_i(u)=o_{\mathbb P}(n^{-1/2})$,
%\end{align*}
and hence (\ref{statement2prime}) follows.
\qed

\paragraph{Proof of (\ref{statement3}):} We will apply Lemma \ref{Lemma:equicontinuity2} with $\widehat h$ equal to the function $u\mapsto  \w Q_{\w \gamma}(u)^{-1} $ and $h_0$ equal to the function $u\mapsto   Q_{0}(u)^{-1} $. By (\ref{convergence:12}), the functions $\{\w Q_{\gamma}^{-1} : \gamma\in B\}$, are, with probability going to $1$, valued in a bounded interval. The fact that they are non-decreasing implies that their total variation is smaller than $|2/E[m_1(X)(1-\Delta)] - 1/(2E[M_1(X)])|$, with probability going to $1$. It follows that there exist $m$ and $v$ such that with probability going to $1$, $\{ \w Q_{\gamma}^{-1}\, :\, \gamma\in B\}\subset \text{BV}(m,v) $.   Furthermore, on the event $\{ \inf_{\gamma \in B,\, y\in \R_{\geq 0}} \w Q _\gamma (y)\geq mE(1-\Delta)\} $, which has probability going to $1$ in light of Lemma \ref{Lemma:probacv}, equation (\ref{convergence:12}), we have
\begin{align*}
\sup_{u\in \mathbb R_{\geq 0}} |\w Q_{\w \gamma}(u)^{-1} - Q_{0}(u)^{-1}|\leq  4  E[m_1(X) (1-\Delta)] ^{-2} \sup_{u\in \mathbb R_{\geq 0}} |\w Q_{\w \gamma}(u) - Q_{0}(u)| \overset{\mathbb P}{\longrightarrow }0.
\end{align*}
\qed

\section{Technical lemmas on empirical processes}\label{app_C_emp_proc}

Empirical process theory is useful to describe the asymptotics of semiparametric estimators because they usually result in empirical sums indexed possibly by some functional quantities. Helpful concepts are Glivenko-Cantelli classes and Donsker classes, as studied in \cite{vandervaart1996}. We start by showing the Glivenko-Cantelli property for certain classes of interest. Let $\xi,\xi_1,\xi_2,\ldots$ be independent and identically distributed random variables with distribution $P$. Denote the underlying probability by $\mathbb P$. A class $\mathcal F$ of real-valued functions is said to be $P$-Glivenko-Cantelli if
\begin{align*}
\sup_{f\in \mathcal F} \left| n^{-1}\sum_{i=1}^n \{ f(\xi_i)- Ef(\xi)\}\right| \overset{\mathbb P}{\longrightarrow} 0.
\end{align*}
When $\mathcal F$ is a vector-valued class, we say it is $P$-Glivenko-Cantelli when each coordinate is $P$-Glivenko-Cantelli. In what follows, the $j$-th coordinate of $d_{\gamma} $ and $\nabla _{\gamma} \widehat Q_\gamma $ are denoted by $d_{\gamma,j} $ and $\nabla _{\gamma,j} \widehat Q_\gamma  $, respectively ($j= 1,\ldots, q$).

\begin{lemma}\label{Lemma:GCclass}
Let $R_{y}(u) =  \mathds 1_{\{ y\leq \tau \}} \mathds 1_{\{ y\geq u\}} + \mathds 1_{\{y>\tau \}} $.  Under (H\ref{cond:consistency_gamma_0}) and (H\ref{cond:consistency_gamma_1}), the following holds:
\begin{enumerate}[(i)]
\item\label{GC:consistency1}   the class $\left\{ (\delta,x) \mapsto   \delta \log(g(\gamma,x) ) \,:\, \gamma\in B \right\}$ is $P$-Glivenko-Cantelli,
\item \label{GC:consistency2}   the class $\left\{ (\delta, y)  \mapsto \delta\log( Q_\gamma(y))   \,:\,  \gamma\in B \right\}$ is  $P$-Glivenko-Cantelli,
\item \label{GC3} the class $\left\{(x,y) \mapsto  g(\gamma,x)R_y(u)  \,:\,  \gamma\in B,\, u\in \mathbb R_{\geq 0} \right\}$ is  $P$-Glivenko-Cantelli. 
\end{enumerate}
Let $ a(y)= \int R_y(u ) d\Lambda_0(u) $. Under (H\ref{cond:consistency_gamma_0}), (H\ref{cond:consistency_gamma_1}) and (H\ref{cond:asymptotics_gamma_2}) the following holds for all $j,k\in \{1,\ldots, q\}$: 
\begin{enumerate}[(i)]  \setcounter{enumi}{3}
\item\label{GC:prime1}  the class $\left\{(x,y) \mapsto \nabla_{\gamma,j} g(\gamma,x) R_y(u)   \,:\,  \gamma\in B,\, u\in \mathbb R_{\geq 0} \right\}$ is $P$-Glivenko-Cantelli,
\item\label{GC:prime1.1}  the class $\left\{(x,y) \mapsto  | \nabla_{\gamma,j}g(\gamma,x) | R_y(u)   \,:\,  \gamma\in B,\, u\in \mathbb R_{\geq 0} \right\}$ is $P$-Glivenko-Cantelli,
\item \label{GC:prime2} the class $\left\{ (x,y) \mapsto d_{\gamma,j} (x) \nabla_{\gamma,k} g(\tilde \gamma, x )^T   a(y) \, : \, \gamma\in B,\, \tilde \gamma\in B \right\}$ is $P$-Glivenko-Cantelli.
\end{enumerate}

\end{lemma}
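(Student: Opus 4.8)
The plan is to prove all six assertions with a common three-part toolkit and then dispatch each class. The first tool is the continuity-based Glivenko--Cantelli (GC) criterion \citep[Example~19.8]{vandervaart1998}: if a family $\{f_\gamma:\gamma\in K\}$ is indexed by a compact metric space $K$, the map $\gamma\mapsto f_\gamma(\xi)$ is continuous for $P$-almost every $\xi$, and the envelope $\sup_{\gamma\in K}|f_\gamma|$ is $P$-integrable, then the family is $P$-GC. The second tool is that the indicators defining $R_y(u)$ form a Vapnik--Chervonenkis (VC) class: a direct check shows $R_y(u)=\mathds 1_{\{y\geq u\}}$ for $u\leq\tau$ and $R_y(u)=\mathds 1_{\{y>\tau\}}$ for $u>\tau$, so $\{y\mapsto R_y(u):u\in\mathbb R_{\geq 0}\}$ is contained in the class of indicators of half-lines, which is VC with envelope $1$ and hence $P$-GC. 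The third tool is the permanence of the GC property under products \citep{vandervaart1996}: if $\mathcal F$ and $\mathcal G$ are $P$-GC, one of them uniformly bounded and the other with an integrable envelope, then $\mathcal F\cdot\mathcal G$ is $P$-GC; multiplying a GC class by a fixed bounded measurable function is the special case I use repeatedly.

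For assertions (\ref{GC:consistency1}) and (\ref{GC:consistency2}) the parameter set is the compact $B$. For (\ref{GC:consistency1}), continuity of $\gamma\mapsto\log g(\gamma,x)$ follows from (H\ref{cond:consistency_gamma_1}), since the Lipschitz bound (\ref{eq:mean_value_th1}) makes $g(\cdot,x)$ continuous and $g\geq m_1>0$; the envelope is dominated by $|\log m_1(x)|+|\log M_1(x)|$, which is $P$-integrable by (H\ref{cond:consistency_gamma_1}), and the factor $\delta\leq 1$ does not affect integrability, so the continuity criterion applies. For (\ref{GC:consistency2}) I would first record that $E[m_1(X)(1-\Delta)]\leq Q_\gamma(y)\leq E[M_1(X)]$ with the lower bound strictly positive by (H\ref{cond:identification1}), and that $|Q_\gamma(y)-Q_{\tilde\gamma}(y)|\leq E[c_1(X)]\,|\gamma-\tilde\gamma|_1$ from (\ref{eq:mean_value_th1}); hence $\{\log Q_\gamma:\gamma\in B\}$ is uniformly bounded and uniformly Lipschitz on the compact $B$, so it is totally bounded in the supremum norm and therefore $P$-GC with a constant envelope, again unaffected by the factor $\delta$.

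For (\ref{GC3}), (\ref{GC:prime1}) and (\ref{GC:prime1.1}) I would write each class as a product of a $\gamma$-indexed family with the indicator family $\{R_\cdot(u)\}$ and apply the product permanence. The $\gamma$-indexed factors $\{g(\gamma,\cdot):\gamma\in B\}$, $\{\nabla_{\gamma,j}g(\gamma,\cdot):\gamma\in B\}$ and $\{|\nabla_{\gamma,j}g(\gamma,\cdot)|:\gamma\in B\}$ are each continuously indexed by the compact $B$ (using differentiability and the Lipschitz bound (\ref{eq:mean_value_th2}) from (H\ref{cond:asymptotics_gamma_2}) for the gradients), with integrable envelopes $M_1$ and $M_2$ respectively (note $E[M_1^2]<\infty$ and $E[M_2]<\infty$), so they are $P$-GC by the continuity criterion. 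Since $\{R_\cdot(u)\}$ is VC, uniformly bounded by $1$, and GC, the product over the joint index $(\gamma,u)\in B\times\mathbb R_{\geq 0}$ is $P$-GC.

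Finally, for (\ref{GC:prime2}), the function $a(y)=\int R_y(u)\,d\Lambda_0(u)$ is a fixed measurable multiplier bounded by $\Lambda_0(\tau)=\theta_0$, so it suffices to show $\{x\mapsto d_{\gamma,j}(x)\nabla_{\gamma,k}g(\tilde\gamma,x):(\gamma,\tilde\gamma)\in B^2\}$ is $P$-GC. This family is indexed by the compact $B^2$ and is continuous in $(\gamma,\tilde\gamma)$ because $g>0$ and $g,\nabla_\gamma g$ are continuous in $\gamma$; its envelope is bounded by $M_2^2(x)/m_1(x)$, using $|d_{\gamma,j}|\leq M_2/m_1$ and $|\nabla_{\gamma,k}g(\tilde\gamma,\cdot)|\leq M_2$, which is $P$-integrable since $E[M_2^2/m_1]<\infty$ by (H\ref{cond:asymptotics_gamma_2}); the continuity criterion and multiplication by the bounded $a$ finish the argument. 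The main obstacle throughout is not any single deep estimate but the careful matching of the required integrable envelopes---and the VC/product bookkeeping that lets the index range over the non-compact set $\mathbb R_{\geq 0}$ in $u$---to exactly the moment conditions supplied by (H\ref{cond:consistency_gamma_1}) and (H\ref{cond:asymptotics_gamma_2}); verifying the continuity and the envelope domination for the gradient-based classes in (\ref{GC:prime1})--(\ref{GC:prime2}) is where I would be most careful.
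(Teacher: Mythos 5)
Your proof is correct, and all six envelope/continuity verifications match the moment conditions in (H\ref{cond:consistency_gamma_1}) and (H\ref{cond:asymptotics_gamma_2}), but you reach the conclusion by a somewhat different route than the paper. Where the paper converts the Lipschitz bounds (\ref{eq:mean_value_th1}) and (\ref{eq:mean_value_th2}) into explicit polynomial bracketing-number bounds (Theorem 2.7.11 of van der Vaart and Wellner, giving (\ref{ineq:bracketing_G})) and then applies the bracketing Glivenko--Cantelli theorem plus the preservation theorem of van der Vaart and Wellner (2000), you use the qualitative compactness--continuity--integrable-envelope criterion for the primitive classes, a direct VC argument for $\{y\mapsto R_y(u):u\in\mathbb R_{\geq 0}\}$ (which the paper leaves implicit at the GC stage and only spells out in the Donsker lemma), and, for part (\ref{GC:consistency2}), total boundedness of $\{\log Q_\gamma:\gamma\in B\}$ in the supremum norm rather than the paper's two-step ``$\{Q_\gamma\}$ is GC, then transform'' argument. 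Your approach is lighter and entirely sufficient for this lemma; what the paper's heavier machinery buys is reusability: the quantitative covering bound (\ref{ineq:bracketing_G}) derived inside this proof is invoked again in Lemma \ref{lemma:dgamma_donsker&hboundedvariation} to verify the uniform entropy condition (\ref{cond:uniform_entropy}) needed for the Donsker-type equicontinuity arguments, so the explicit entropy estimates are an investment that your qualitative criterion would not provide. One detail you handle more carefully than the statement of the lemma itself: the constant envelope in part (\ref{GC:consistency2}) requires $E[m_1(X)(1-\Delta)]>0$, which does not follow from (H\ref{cond:consistency_gamma_0})--(H\ref{cond:consistency_gamma_1}) alone but from (H\ref{cond:identification1}), exactly as you note; the paper uses this positivity implicitly through (\ref{eq:bound_upper_lower}).
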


\begin{proof}
Let $\mathcal N_{[\,]}(\epsilon,\mathcal F, \|\cdot\|)$ (resp. $\mathcal N(\epsilon,\mathcal F, \|\cdot\|)$) denote the $\epsilon$-bracketing number (resp. $\epsilon$-covering number) of the metric space $(\mathcal F,\|\cdot\|)$ \citep[Definition 2.1.5]{vandervaart1996}.

As a preliminary step, we show that the class $\mathcal G=\left\{ x \mapsto   g(\gamma,x ) \,:\, \gamma\in B \right\}$ is Glivenko-Cantelli whenever $0< E[c_1(X)]<+\infty$ which is true by (H\ref{cond:consistency_gamma_1}). 
%and $E[M_1(X)]<+\infty$. 
Because of (\ref{eq:mean_value_th1}), we are in position to apply Theorem 2.7.11 in \cite{vandervaart1996} with the $L_p(Q)$-norm, $p\geq 1$, $Q$ some probability measure, and the class $\mathcal G$. Let $B_0$ be some ball of finite radius in $\mathbb R^{q}$ such that $B\subset B_0$. Because the $\epsilon$-covering number of $(B_0,|\cdot|_1)$ is $O(\epsilon^ {-q})$, we find 
\begin{align}\label{ineq:bracketing_G}
\mathcal N_{[\,]}\left(  2 \epsilon  \|  c_1\|_{L_p(Q)},\mathcal G, L_p(Q)\right)  \leq  \mathcal N_{}(\epsilon,B_0, |\cdot|_1) =  K\epsilon^{-q},
\end{align}
for some $K>0$. When $p=1$ and $Q=P$, because $0< \|  c_1\|_{L_1(P)}< +\infty  $, we have that $\mathcal N_{[\,]}\left(   \epsilon  ,\mathcal G, L_1(P)\right)<+\infty $ for every $\epsilon >0$, making the class $\mathcal G$ is Glivenko-Cantelli \citep[Theorem 2.4.1]{vandervaart1996}. 
 
%is an envelop for the class $\mathcal G - g(\gamma_0,\cdot)$, 
%The bound (\ref{ineq:bracketing_G}) holds for the envelop-normalization $\|2c_1+M_1\|_{L_p(Q)}  $ instead of $\|2  c_1\|_{L_p(Q)}$. As $\|2c_1+M_1 \|_{L_1(P)}<+\infty$, the class $\mathcal G$ is Glivenko-Cantelli.

We now prove (i). The class of interest $\{ (\delta, x ) \mapsto \delta \log(g(\gamma,x) ) \, :\, \gamma\in B\}$ can be written as $\mathcal F_1\times \log(\mathcal F_2)$ where $\mathcal F_1 = \{\delta\mapsto \delta\}$ and $\mathcal F_2 = \{  x  \mapsto \log(g(\gamma,x) ) \, :\, \gamma\in B\}$. This is a continuous transformation of a Glivenko-Cantelli class and we can apply Theorem 3 in \cite{vandervaart+w:2000}. The envelop property is ensured as, by (H\ref{cond:consistency_gamma_1}), $E [\sup_{\gamma\in B} | \log(g(\gamma,X))| ]\leq E [ |\log(M_1(X))|  +|\log(m_1(X))|] <+\infty $.

We now consider (ii). Multiplying (\ref{eq:mean_value_th1}) by $R_Y(y)$ and taking the expectation, we obtain, for every $y\in\mathbb R_{\geq 0}$, $(\gamma,\tilde \gamma)\in B^2$,
\begin{align}\label{eq:bound_lipschitz_Q}
|  Q_\gamma(y)  -   Q_{\tilde \gamma}(y)  | \leq |\gamma - \tilde \gamma |_1  E[ c_1(X)].
\end{align}
Following the preliminary step of the proof with (\ref{eq:bound_lipschitz_Q}) in place of (\ref{eq:mean_value_th1}), we again invoke Theorem 2.7.11 and Theorem 2.4.1 in \cite{vandervaart1996} to obtain that $\left\{  y  \mapsto  Q_\gamma(y)   \,:\,  \gamma\in B \right\}$ is Glivenko-Cantelli.
%with constant envelop $\text{diam}(B_0) E[c_1(X) ] + E[M_1(X)]$. 
Then applying Theorem 3 in \cite{vandervaart+w:2000}, we show (\ref{GC:consistency2}). The (constant) envelop property is provided by (\ref{eq:bound_upper_lower}).

To show  (\ref{GC3}), we apply Theorem 3 in \cite{vandervaart+w:2000} as the class of interest is the product of two classes, $\mathcal G$ and $\{y\mapsto R_y(u)\, :\, u\in \mathbb R_{\geq 0} \}$, both being $P$-Glivenko-Cantelli.

For (\ref{GC:prime1}), let $j\in \{1,\ldots, q\}$. Similarly to the preliminary step, we show that $\{\nabla _{\gamma,j} g(\gamma, x) \,:\, \gamma\in B\} $ is $P$-Glivenko-Cantelli provided that $0<E[c_2(X)] <+\infty $. Then as when proving (\ref{GC3}), because $E[ M_2(X)]<+\infty $ we apply Theorem 3 in \cite{vandervaart+w:2000} to obtain (\ref{GC:prime1}).

For (\ref{GC:prime1.1}),  Theorem 3 in \cite{vandervaart+w:2000} applied to  $\big\{(x,y) \mapsto \nabla_{\gamma,j} g(\gamma,x) R_y(u) : $  $ \gamma\in B,\, u\in \mathbb R_{\geq 0} \big\}$ gives that
$\left\{(x,y) \mapsto |\nabla_{\gamma,j} g(\gamma,x)| R_y(u)   \,:\,  \gamma\in B,\, u\in \mathbb R_{\geq 0} \right\}$ is $P$-Glivenko-Cantelli.

Concerning (\ref{GC:prime2}), let $j,k\in \{1,\ldots, q\}$. The class of interest is a continuous transformation of the $P$-Glivenko-Cantelli classes,
\begin{align*}
&\left\{(x,y) \mapsto \nabla_{\gamma,j} g(\gamma,x)    \,:\,  \gamma\in B,\, u\in\mathbb R_{\geq 0} \right\},\quad  \left\{(x,y) \mapsto \nabla_{\gamma,k} g(\gamma,x)    \,:\,  \gamma\in B,\,u\in\mathbb R_{\geq 0} \right\}\\
&\left\{x \mapsto   g(\gamma,x)    \,:\,  \gamma\in B \right\},\quad \{y\mapsto a(y) \}.
\end{align*}
Consequently, one just has to verify the envelop property which is obtained from (H\ref{cond:consistency_gamma_1}) and (H\ref{cond:asymptotics_gamma_2}),
\begin{align*}
E \left[\sup_{\gamma\in B, \, \tilde \gamma\in B} \left| d_{\gamma,j} (X) \nabla_{\gamma,k} g(\tilde \gamma, X )^T a(Y) \right| \right]< \theta_0 E\left[\frac{ M_2^2(X) }{ m_1(X) } \right] <+\infty.
\end{align*}
\end{proof}

\begin{lemma}\label{Lemma:probacv}
Let $ \gamma_n$ be a random sequence that converges to $\gamma_0$ in $\mathbb P$-probability. Under (H\ref{cond:consistency_gamma_1}), we have that
\begin{align}
&  \sup_{\gamma \in B,\, y\in \R_{\geq 0} } | \w Q_{\gamma}(y)- Q_{\gamma}(y)|\overset{\mathbb P}{\lr} 0,\label{convergence:11}\\
 &\mathbb P \left( \forall\gamma \in B,\, \forall y\in \R_{\geq 0}\,:\,  E[ m_1(X) (1-\Delta)]/2 \leq  \widehat Q_\gamma (y )  \leq 2E[M_1(X)]  \right)  \lr 1, \label{convergence:12}\\
&\sup_{y\in\R_{\geq 0}} \left|     \w  Q_{ \gamma_n} (y) - Q_{0} (y)   \right|  \overset{\mathbb P}{\lr} 0.\label{convergence:13} 
\end{align}
Under (H\ref{cond:consistency_gamma_1}) and (H\ref{cond:asymptotics_gamma_2}), we have that
\begin{align}
&  \sup_{\gamma \in B,\, y\in \R_{\geq 0} } \left| \nabla_\gamma \w Q_{\gamma}(y)- \nabla_\gamma Q_{\gamma}(y)\right|_1\overset{\mathbb P }{\lr} 0, \label{convergence:21} \\
 &\mathbb P \left( \sup_{\gamma \in B,\, y\in \R_{\geq 0}} \left| \nabla_\gamma \w Q_{\gamma}(y) \right|_1 \leq 2E[M_2(X)]  \right)  \lr 1, \label{convergence:21.1}\\
&\sup_{y\in\R_{\geq 0}} \left|   \nabla_\gamma \w  Q_{ \gamma_n} (y) -     \nabla_\gamma Q_{0} (y)    \right| _1 \overset{\mathbb P}{\lr} 0\label{convergence:22} .
\end{align}
\end{lemma}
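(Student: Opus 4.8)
The plan is to derive all six convergences from the Glivenko--Cantelli statements of Lemma \ref{Lemma:GCclass} together with the elementary deterministic bounds already recorded, handling the versions evaluated at the random sequence $\gamma_n$ through Lipschitz continuity in $\gamma$. First I would prove (\ref{convergence:11}). Writing $R_{Y_i}(u) = R_i(u)$, we have $\w Q_\gamma(u) = n^{-1}\sum_{i=1}^n g(\gamma,X_i) R_{Y_i}(u)$ and $Q_\gamma(u) = E[g(\gamma,X) R_Y(u)]$, so that $\sup_{\gamma\in B,\, y\in\R_{\geq 0}} |\w Q_\gamma(y) - Q_\gamma(y)|$ is exactly the Glivenko--Cantelli deviation, over the index set $(\gamma,u)\in B\times\R_{\geq 0}$, of the class in assertion (\ref{GC3}) of Lemma \ref{Lemma:GCclass}. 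Thus (\ref{convergence:11}) is immediate, the uniformity in $y$ costing nothing because $y$ is precisely the index $u$ of that class.

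For (\ref{convergence:12}) I would combine (\ref{convergence:11}) with the two-sided bound (\ref{eq:bound_upper_lower}): on the event where $\sup_{\gamma,y}|\w Q_\gamma - Q_\gamma| \leq E[m_1(X)(1-\Delta)]/2$, every $\w Q_\gamma(y)$ is at least $E[m_1(X)(1-\Delta)]/2$ and at most $E[M_1(X)] + E[m_1(X)(1-\Delta)]/2 \leq 2 E[M_1(X)]$, and this event has probability tending to one by (\ref{convergence:11}). For (\ref{convergence:13}), the triangle inequality gives $\sup_y |\w Q_{\gamma_n}(y) - Q_0(y)| \leq \sup_{\gamma,y}|\w Q_\gamma - Q_\gamma| + \sup_y |Q_{\gamma_n}(y) - Q_0(y)|$; the first term vanishes by (\ref{convergence:11}), while the Lipschitz bound (\ref{eq:bound_lipschitz_Q}) controls the second by $|\gamma_n - \gamma_0|_1 E[c_1(X)] \overset{\mathbb P}{\lr} 0$, using $\gamma_n \overset{\mathbb P}{\lr}\gamma_0$ and $E[c_1(X)]<\infty$ from (H\ref{cond:consistency_gamma_1}).

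The gradient statements follow the same scheme coordinate by coordinate. Assertion (\ref{GC:prime1}) of Lemma \ref{Lemma:GCclass} yields, for each $j$, the uniform convergence of $\nabla_{\gamma,j}\w Q_\gamma$ to $\nabla_{\gamma,j}Q_\gamma$, and summing over $j$ gives (\ref{convergence:21}) in the $\ell_1$-norm. For (\ref{convergence:21.1}) I would bound directly: since $|\nabla_\gamma g(\gamma,x)|_1 < M_2(x)$ by (H\ref{cond:asymptotics_gamma_2}) and $R_i(y)\leq 1$, one has $\sup_{\gamma,y}|\nabla_\gamma\w Q_\gamma(y)|_1 \leq n^{-1}\sum_{i=1}^n M_2(X_i)$, which tends to $E[M_2(X)]<\infty$ by the law of large numbers and is therefore below $2E[M_2(X)]$ with probability tending to one. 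Finally (\ref{convergence:22}) mirrors (\ref{convergence:13}): the random part is handled by (\ref{convergence:21}), and the deterministic part uses the estimate $|\nabla_\gamma Q_\gamma(y) - \nabla_\gamma Q_{\tilde\gamma}(y)|_1 \leq |\gamma-\tilde\gamma|_1 E[c_2(X)]$, obtained by integrating (\ref{eq:mean_value_th2}) against $R(y)\leq 1$, together with $\gamma_n\overset{\mathbb P}{\lr}\gamma_0$.

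The one point requiring genuine care is the identity $\nabla_\gamma Q_\gamma(y) = E[\nabla_\gamma g(\gamma,X)R(y)]$, that is, differentiation under the expectation; I would justify it by the envelope $|\nabla_\gamma g(\gamma,x)|_1 < M_2(x)$ with $E[M_2(X)]<\infty$ from (H\ref{cond:asymptotics_gamma_2}), which dominates the difference quotients uniformly over $B$. Beyond this, the lemma is essentially an assembly exercise: the substantive work of establishing the bracketing/entropy bounds behind the Glivenko--Cantelli property and its envelopes has already been carried out in Lemma \ref{Lemma:GCclass}, so here it only remains to combine those facts with the law of large numbers and the two Lipschitz estimates.
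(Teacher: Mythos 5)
Your proof is correct and follows essentially the same route as the paper's: (\ref{convergence:11}) and (\ref{convergence:21}) from assertions (\ref{GC3}) and (\ref{GC:prime1}) of Lemma \ref{Lemma:GCclass}, the probability statements from the deterministic bounds, and the $\gamma_n$-versions (\ref{convergence:13}) and (\ref{convergence:22}) via the triangle inequality combined with the Lipschitz estimates (\ref{eq:bound_lipschitz_Q}) and the integrated form of (\ref{eq:mean_value_th2}). The only cosmetic difference is (\ref{convergence:21.1}), where you bound $\sup_{\gamma,y}|\nabla_\gamma \w Q_\gamma(y)|_1$ directly by $n^{-1}\sum_i M_2(X_i)$ and invoke the law of large numbers, whereas the paper combines (\ref{convergence:21}) with the population bound $|\nabla_\gamma Q_\gamma(y)|_1 \leq E[M_2(X)]$; both are one-line arguments, and your explicit justification of differentiating under the expectation is a point the paper leaves implicit.
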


\begin{proof}
Convergences (\ref{convergence:11}) and (\ref{convergence:21}) are consequences of, respectively, (\ref{GC3}) and (\ref{GC:prime1}) of Lemma \ref{Lemma:Donskerclass}. Statement (\ref{convergence:12})  is an easy consequence of (\ref{eq:bound_upper_lower})  and (\ref{convergence:11}). Similarly, we obtain (\ref{convergence:21.1}) invoking (\ref{convergence:21}) and the fact that, from (H\ref{cond:asymptotics_gamma_2}),
%\begin{align*}
$\left| \nabla_\gamma Q_{\gamma}(y) \right|_1\leq E[ M_2(X)]$.
%\end{align*}
Convergences (\ref{convergence:13}) and (\ref{convergence:22}) are treated similarly. Indeed, for (\ref{convergence:13}), write
\begin{align*}
\sup_{y\in\R_{\geq 0}} \left|     \w  Q_{ \gamma_n} (y) - Q_{0} (y)   \right|  \leq \sup_{y\in\R_{\geq 0},\, \gamma\in B} \left|   \w  Q_{ \gamma } (y) -  Q_{\gamma} (y)   \right| +   \sup_{y\in\R_{\geq 0}} \left|      Q_{ \gamma_n} (y) - Q_{0} (y)   \right|.
\end{align*}
The first term on the right hand side goes to $0$ in $\mathbb P$-probability as shown before. For the second term on the right hand side, (\ref{eq:bound_lipschitz_Q}) yields
\begin{align*}
 \sup_{y\in\R_{\geq 0}} \left|      Q_{ \gamma_n} (y) - Q_{0} (y)   \right|\leq |\gamma_n-\gamma_0 |_1  E[ c_1(X)] .
\end{align*}
The conclusion follows. For (\ref{convergence:22}) we do the same and obtain from (\ref{eq:mean_value_th2}) that
\begin{align*}
 \sup_{y\in\R_{\geq 0}} \left|      \nabla_\gamma Q_{ \gamma_n} (y) - \nabla_\gamma Q_{0} (y)   \right|_1\leq |\gamma_n-\gamma_0 |_1  E[ c_2(X)] .
\end{align*}
The result now follows.
\end{proof}

We now turn our attention to some results related to the concept of Donsker classes. A class $\mathcal F$ is said to be $P$-Donsker if
\begin{align*}
 n^{1/2} \left( n^{-1}\sum_{i=1}^n \{ f(\xi_i)- Ef(\xi)\}\right) \text{converges weakly to a Gaussian process in the space }\ell ^{\infty}(\mathcal F).
\end{align*}
The space $\ell ^{\infty}(\mathcal F)$ denotes the metric space of bounded functions defined on $\mathcal F$ endowed with the supremum distance. Let $\text{BV}(m,v)$ denote the space of c\`ad-l\`ag functions bounded by $m$ and with bounded variation $v$. Define, for every $(y,u,x)\in \mathbb R_{\geq 0}\times \mathbb R_{\geq 0}\times \mathcal S$,
\begin{align*}
M_{y,\delta, x}(u )  = \delta \mathds 1_{\{y\leq u\}} - \int_0^u g(\gamma_0,x) (\mathds 1_{\{ y\leq \tau \}} \mathds 1_{\{ y\geq v \}} + \mathds 1_{\{y>\tau \}} ) d\Lambda_0(v).
\end{align*}

\begin{lemma}\label{Lemma:Donskerclass}
Suppose that $E[g(\gamma_0,X)^2]<+\infty$. The class $ \{ (y,\delta, x) \mapsto  \int h(u) dM_{y,\delta, x}(u)\, : \, h \in \text{BV} (m,v)  \}$ is $P$-Donsker. 
\end{lemma}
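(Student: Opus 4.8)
The plan is to use the linearity of $h\mapsto \int h\, dM_{y,\delta,x}$ together with the explicit form of $M_{y,\delta,x}$ to reduce the statement to the Donsker property of a ball of functions of bounded variation, which is classical. Writing $R_y(u)=\mathds 1_{\{y\le\tau\}}\mathds 1_{\{y\ge u\}}+\mathds 1_{\{y>\tau\}}$ as in Lemma \ref{Lemma:GCclass} and $\phi_h(y)=\int h(u)R_y(u)\,d\Lambda_0(u)$, the definition of $M_{y,\delta,x}$ gives
$$\int h(u)\,dM_{y,\delta,x}(u)=\delta\,h(y)-g(\gamma_0,x)\,\phi_h(y).$$
Thus the class of interest is the pointwise difference of $\mathcal H_1=\{(y,\delta)\mapsto\delta\,h(y):h\in\text{BV}(m,v)\}$ and $\mathcal H_2=\{(y,x)\mapsto g(\gamma_0,x)\,\phi_h(y):h\in\text{BV}(m,v)\}$. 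Since a finite difference of $P$-Donsker classes with square-integrable envelopes is again $P$-Donsker, and since the envelope $m+m\theta_0\,g(\gamma_0,x)$ of the full class lies in $L^2(P)$ precisely because $E[g(\gamma_0,X)^2]<\infty$, it suffices to show that $\mathcal H_1$ and $\mathcal H_2$ are each $P$-Donsker.

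The common starting point is that a ball $\text{BV}(m,v)$ of c\`adl\`ag functions of uniformly bounded sup-norm and total variation satisfies $\mathcal N_{[\,]}(\epsilon,\text{BV}(m,v),L^2(Q))\le K/\epsilon$ for every probability measure $Q$, with $K$ independent of $Q$ \citep[Theorem 2.7.5]{vandervaart1996}; hence its bracketing integral is finite and it is universally Donsker. For $\mathcal H_1$ I would transfer brackets directly: if $[h^L,h^U]$ brackets $h$ in $L^2(P_Y)$, then $[\delta h^L,\delta h^U]$ brackets $\delta h(y)$, and because $\delta\in\{0,1\}$ one has $\|\delta(h^U-h^L)\|_{L^2(P)}\le\|h^U-h^L\|_{L^2(P_Y)}$, so $\mathcal N_{[\,]}(\cdot,\mathcal H_1,L^2(P))$ is dominated by the BV estimate and $\mathcal H_1$ is $P$-Donsker.

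For $\mathcal H_2$ I would first note that each $\phi_h$ is itself a uniformly controlled BV function of $y$: on $[0,\tau]$ it equals $\int_0^y h\,d\Lambda_0$, of total variation at most $m\theta_0$, and it is constant for $y>\tau$ (using $F_0(\tau)=1$, i.e.\ $\Lambda_0(\tau)=\theta_0$), so $\{\phi_h\}\subset\text{BV}(m\theta_0,m\theta_0)$, which is Donsker. The remaining step, which I expect to be the main obstacle, is the multiplication by the fixed but possibly unbounded factor $g(\gamma_0,\cdot)$, and this is where the moment hypothesis enters. Here I would introduce the finite measure $d\nu(y)=E[g(\gamma_0,X)^2\mid Y=y]\,dP_Y(y)$, of total mass $E[g(\gamma_0,X)^2]<\infty$, and its normalization $\bar\nu$. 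Brackets $[\phi^L,\phi^U]$ for $\{\phi_h\}$ of $L^2(\bar\nu)$-size $\epsilon$ yield, using $g>0$, brackets $[g\phi^L,g\phi^U]$ for $\mathcal H_2$ of $L^2(P)$-size $\nu(\R_{\geq 0})^{1/2}\,\|\phi^U-\phi^L\|_{L^2(\bar\nu)}$; consequently $\mathcal N_{[\,]}(\cdot,\mathcal H_2,L^2(P))$ inherits the $O(1/\epsilon)$ bound from the BV estimate applied with $Q=\bar\nu$, its bracketing integral is finite, and $\mathcal H_2$ is $P$-Donsker by the bracketing central limit theorem \citep[Theorem 2.5.6]{vandervaart1996}. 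Combining the two pieces gives the claim.
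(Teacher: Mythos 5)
Your proof is correct, but it takes a genuinely different route from the paper's. The paper never decomposes $\int h\,dM_{y,\delta,x}$ pointwise: it first proves weak convergence of the basic process $\{n^{-1/2}\sum_{i=1}^n M_i(u)\,:\,u\in\R_{\geq 0}\}$ in $\ell^\infty(\R_{\geq 0})$ --- combining the Donsker property of $\{(y,\delta)\mapsto \delta\mathds 1_{\{y\leq u\}}\}$ with a VC argument for the compensator class and the product-preservation result of Example 2.10.23 in \cite{vandervaart1996}, which is where $E[g(\gamma_0,X)^2]<+\infty$ enters --- and then pushes this convergence forward through the linear map $H\mapsto \{\int h\,dH \,:\, h\in\text{BV}(m,v)\}$, whose continuity from the sup-norm into $\ell^\infty(\text{BV}(m,v))$ rests on the norm equivalence cited from \cite{dudley1992}. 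You instead write $\int h\,dM_{y,\delta,x}=\delta h(y)-g(\gamma_0,x)\phi_h(y)$ and control each piece by bracketing: brackets transfer under multiplication by $\delta$ for the first piece, and for the second piece your change of measure $d\nu=E[g(\gamma_0,X)^2\mid Y]\,dP_Y$ converts the problem of bracketing an unbounded-multiplier class in $L^2(P)$ into bracketing the uniformly bounded BV class $\{\phi_h\}$ in $L^2(\bar\nu)$; Ossiander's bracketing CLT and sum-preservation of Donsker classes then finish the argument. Your route is more self-contained --- it avoids the duality/norm-equivalence step, which is the least transparent point of the paper's proof --- and the tilting trick is a clean substitute for product-preservation theorems when the fixed factor $g(\gamma_0,\cdot)$ is unbounded; the paper's route, in exchange, establishes weak convergence of the underlying martingale empirical process itself, which is structurally natural and is the object one intuitively expects to control. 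One correction to record: Theorem 2.7.5 of \cite{vandervaart1996} bounds the \emph{logarithm} of the bracketing number, i.e.\ $\log \mathcal N_{[\,]}\left(\epsilon,\text{BV}(m,v),L_2(Q)\right)\leq K/\epsilon$ uniformly in $Q$, not the bracketing number itself (polynomial bracketing numbers would be false for a BV ball); since your only use of this bound is that the bracketing integral is finite, which the correct statement delivers, nothing downstream in your argument changes.
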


\begin{proof}

As a first step, we show that $\{n^{-1/2} \sum_{i=1}^n M_i(u)\, :\, u\in\mathbb R_{\geq 0}\}   $ converges weakly  in $\ell^\infty (\mathbb R_{\geq 0})$. 
Example 2.5.4 in \cite{vandervaart1996} provides a bound on the uniform entropy numbers of the class of indicator functions. Example 2.10.23 in \cite{vandervaart1996} ensures that the product of two such classes is Donsker. It implies that $\{(y,\delta)\mapsto \delta \mathds 1 _{\{ y \leq u\}}\, : \, u\in\mathbb R\}$ is Donsker. Moreover, the set of functions defined for any  $ (y,x)\in \mathbb R_{\geq 0}\times \mathcal S $ by
\begin{align*}
\int_0^u g(\gamma_0,x) (\mathds 1_{\{ y\leq \tau \}} \mathds 1_{\{ y\geq v \}} + \mathds 1_{\{y>\tau \}} ) d\Lambda_0(v) =  g(\gamma_0,x)\left (\mathds 1_{\{ y\leq \tau \}} \Lambda_0 (y\wedge u ) + \mathds 1_{\{ y> \tau \}} \Lambda_0 ( u) \right),
\end{align*}  
when $u$ varies in $\mathbb R_{\geq 0}$, is VC. Indeed, the class $\{y\mapsto \mathds 1_{\{ y\leq \tau \}} \Lambda_0 (y\wedge u ) + \mathds 1_{\{ y> \tau \}} \Lambda_0 ( u)\, : \, u\in \mathbb R_{\geq 0} \}$ is uniformly bounded and their subgraphs are ordered by inclusion, as $u $ increases. Therefore, any $2$ points can not be shattered by the collection of subgraphs, which means that the VC index is $2$. The class $\{x\mapsto  g(\gamma_0,x)\}$ has only one element, and hence the product will be Donsker as soon as $E[ g(\gamma_0,X)^2] <+\infty$ (again from Example 2.10.23 in \cite{vandervaart1996}).

As a second step, we show that the process  $\{n^{-1} \sum_{i=1}^n\int h(u) dM_i(u)\,:\, h\in \text{BV} (m,v)\}$ converges weakly in $\ell ^{\infty}( \text{BV} (m,v)) $ relying on the preservation of weak convergence through continuous mappings. The previous process is the image of $\{ n ^{-1} \sum_{i=1}^n M_i(u)\,:\, u\in \mathbb R_{\geq 0} \}$ by the linear transformation $H\mapsto \{\int h(u)dH(u)\, :\, h\in \text{BV} (m,v)\}$,  defined on the space of c\`ad-l\`ag functions and valued in $\ell ^{\infty}(\text{BV} (m,v))$. Weak convergence is preserved whenever the map is continuous \citep{vandervaart1996}, so whenever
\begin{align*}
\sup_{h\in \text{BV} (m,v)}\left|\int h(u) d H(u)\right| \rightarrow 0,\qquad \text{as } \sup_{u\in \mathbb R_{\geq 0}} |H(u)|\rightarrow 0.
\end{align*}
The latter holds since both norms are in fact equivalent \citep{dudley1992}.
\end{proof}

The following lemma is useful to characterize the limiting distribution of the estimators.
\begin{lemma}\label{lemma:weakcv}
Under (H\ref{cond:consistency_gamma_1}) and (H\ref{cond:asymptotics_gamma_2}), the empirical process
\begin{align*}
&    n^{-1/2} \sum_{i=1}^n \left( \begin{array}{c}
\int_0^y \frac{dM_i(u)}{Q_0(u) } \\
\int (d_0(X_i)- h_0(u)) dM_i(u)
\end{array}  \right)   ,
\end{align*}
converges weakly in $\ell^{\infty} (\R)\times \R^q$ to a Gaussian process with covariance function
\begin{align*}
(y,y') \mapsto \begin{pmatrix}
\int_0^{y\wedge y'}   \frac{d\Lambda_0(u)}{Q_0(u) }   &0\\
0& \int E\left[ (d_0(X)- h_0(u))(d_0(X)- h_0(u))^T g(\gamma_0,X) R(u)\right]   d\Lambda_0 (u)
\end{pmatrix} .
\end{align*} 
\end{lemma}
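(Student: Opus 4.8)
The plan is to separate asymptotic tightness from convergence of the finite-dimensional distributions, treating the two coordinates differently. Write the $i$-th summand as the iid centered pair
\[
\xi_i = \Big( y \mapsto \int_0^y \frac{dM_i(u)}{Q_0(u)}\, ,\ \int (d_0(X_i) - h_0(u))\, dM_i(u) \Big) \in \ell^{\infty}(\mathbb{R}_{\geq 0}) \times \mathbb{R}^q ,
\]
the exact centering coming from the fact that $\int H\, dM_i$ has mean zero for every bounded predictable integrand $H$ (the compensator property behind (\ref{formula:martingale})). The steps are: realize the first coordinate as the image of the Donsker class of Lemma \ref{Lemma:Donskerclass} to get its asymptotic tightness; note the second coordinate is tight because it is finite-dimensional; deduce joint asymptotic tightness; obtain finite-dimensional convergence from the classical CLT; and finally identify the covariance.

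For the first coordinate, the functions $u \mapsto \mathds{1}_{\{u \le y\}}/Q_0(u)$, $y \in \mathbb{R}_{\geq 0}$, are uniformly bounded, since $Q_0(u) \ge E[m_1(X)(1-\Delta)] > 0$ by (\ref{eq:bound_upper_lower}), and of uniformly bounded variation, hence contained in some $\text{BV}(m,v)$. The first coordinate is therefore the image under $y \mapsto \mathds{1}_{\{\cdot \le y\}}/Q_0$ of the $P$-Donsker class of Lemma \ref{Lemma:Donskerclass}, whose hypothesis $E[g(\gamma_0,X)^2] < \infty$ holds because $g(\gamma_0,\cdot) \le M_1$ with $E[M_1^2(X)] < \infty$ by (H\ref{cond:consistency_gamma_1}); thus it is asymptotically tight in $\ell^{\infty}(\mathbb{R}_{\geq 0})$. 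The second coordinate is a normalized sum of iid centered vectors in $\mathbb{R}^q$ with finite second moment (its variance is $\tr(I_0) < \infty$ under (H\ref{cond:asymptotics_gamma_2})), hence tight. Since asymptotic tightness passes to Cartesian products, the joint process is asymptotically tight in $\ell^{\infty}(\mathbb{R}_{\geq 0}) \times \mathbb{R}^q$.

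For the finite-dimensional distributions, fix $y_1, \dots, y_k$, $b \in \mathbb{R}^k$ and $a \in \mathbb{R}^q$ and apply the Cram\'er--Wold device to the scalar iid centered summand $\sum_j b_j \int_0^{y_j} Q_0(u)^{-1} dM_i(u) + a^{T} \int (d_0(X_i) - h_0(u))\, dM_i(u)$; the Lindeberg--L\'evy CLT then yields joint asymptotic normality. Together with the joint asymptotic tightness and the uniqueness of the limit determined by its finite-dimensional laws, this gives the claimed weak convergence (Theorem 1.5.4 in \cite{vandervaart1996}).

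The content of the proof is the covariance identification. For bounded predictable integrands $H, \tilde H$ one has the predictable-integrand extension of (\ref{formula:quadratic_variation}),
\[
E\Big[ \int H\, dM \int \tilde H\, dM \Big] = \int E\big[ H(u) \tilde H(u)\, g(\gamma_0,X) R(u) \big]\, d\Lambda_0(u) ,
\]
which is needed here because the integrand $d_0(X) - h_0(u)$ depends on the covariate, a legitimate move since $d_0(X)$ is $\mathcal{F}_0$-measurable and hence predictable. Choosing $H = \tilde H = \mathds{1}_{\{\cdot \le y\}}/Q_0$ and $\mathds{1}_{\{\cdot \le y'\}}/Q_0$ and using $E[g(\gamma_0,X) R(u)] = Q_0(u)$ gives the top-left block $\int_0^{y \wedge y'} d\Lambda_0(u)/Q_0(u)$, while $H = \tilde H = d_0(X) - h_0(u)$ gives the bottom-right block, namely $I_0$ of (\ref{eq:variance_I0}). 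The decisive point is that the cross block vanishes: with $H = \mathds{1}_{\{\cdot \le y\}}/Q_0$ and $\tilde H = d_0(X) - h_0(u)$ it equals $\int_0^y Q_0(u)^{-1} E[(d_0(X) - h_0(u))\, g(\gamma_0,X) R(u)]\, d\Lambda_0(u)$, whose integrand is identically zero because $E[d_0(X) g(\gamma_0,X) R(u)] = \nabla_\gamma Q_{\gamma_0}(u) = h_0(u) Q_0(u)$ by the definitions of $d_0$ and $h_0$ --- the identity already used in the proof of (\ref{statement2}), applied to the bounded function $u \mapsto \mathds{1}_{\{u \le y\}}/Q_0(u)$. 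This cancellation is what produces the off-diagonal zeros and hence the asymptotic independence of the two coordinates. I expect this cross-term bookkeeping --- integrating a covariate-dependent integrand against $dM$ and recognizing the defining relation of $h_0$ --- to be the main obstacle, the tightness and CLT steps being routine given the earlier lemmas.
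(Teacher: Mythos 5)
Your proposal is correct and follows essentially the same route as the paper's proof: asymptotic tightness of the process coordinate via the $\text{BV}(m,v)$-indexed Donsker class of Lemma \ref{Lemma:Donskerclass} (using $Q_0^{-1}\in\text{BV}$ and (\ref{eq:bound_upper_lower})), combined with the multivariate CLT for the finite-dimensional laws. The only difference is one of completeness, in your favor: you carry out explicitly the covariance identification --- in particular the vanishing of the cross block via $E[d_0(X)g(\gamma_0,X)R(u)]=h_0(u)Q_0(u)$ and the extension of (\ref{formula:quadratic_variation}) to covariate-dependent integrands --- which the paper's proof leaves implicit in the phrase ``by using the multivariate central limit theorem.''
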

\begin{proof}
The statement is a consequence of Lemma \ref{Lemma:Donskerclass}. By (\ref{eq:bound_upper_lower}), $Q_0^{-1} \in \text{BV}(m,v)$ for some $m>0$ and $v>0$. Using that for any $j\in \{1,\ldots, q\}$, $u<u'$,
\begin{align*}
|\nabla_{\gamma,j} Q_{\gamma}(u) - \nabla_{\gamma,j} Q_{\gamma}(u')| \leq  E\left[  |\nabla_{\gamma,j} g(\gamma, X) | \, (R(u)-R(u'))   \right] ,
\end{align*}
we have that $\nabla_{\gamma,j} Q_{\gamma} \in \text{BV}(m',v')$ for some $m'>0$ and $v'>0$. Consequently, $h_0\in BV(m'',v'')$for some $m''>0$ and $v''>0$. The Donsker property given by Lemma \ref{Lemma:Donskerclass} implies the tightness of each coordinate of the underlying empirical process. Then by using the multivariate central limit theorem, we obtain the convergence in distribution of the finite dimensional laws. This shows the result.
\end{proof}

Recall that $\mathcal N_{}(\epsilon,\mathcal F, \|\cdot\|)$ denotes the $\epsilon$-covering number of the metric space $(\mathcal F,\|\cdot\|)$. A class $\mathcal F$ with envelop $F$ is said to satisfy the uniform entropy condition whenever
\begin{align}\label{cond:uniform_entropy}
\int_0^{+\infty} \sup_{Q} \sqrt {\log\mathcal N_{}\left(\epsilon\|F\|_{L_2(Q)},\mathcal F,L_2(Q)\right)   } d\epsilon <+\infty,
\end{align}
where the supremum is taken over all the finitely discrete probability measures. It is tempting to generalise the next Lemma with the Donsker property in place of the more technical and stronger requirement on the uniform entropy condition. Unfortunately it will generally fail when the random variables $g(\gamma,X)$, $\gamma\in B$, are unbounded. As detailed in \cite{vandervaart1996}, Section 2.10.2, stronger preservation properties are available when dealing with uniform entropy numbers \citep[Theorem 2.10.20]{vandervaart1996} rather than with the Donsker property \citep[Theorem 2.10.6]{vandervaart1996}.

\begin{lemma}\label{Lemma:equicontinuity1}
Let $\mathcal D$ denote a class of functions $\mathcal S\rightarrow \mathbb R$ satisfying (\ref{cond:uniform_entropy}) with envelop $D$ such that $E[D(X)^2g(\gamma_0,X) ]<+\infty$. If $\widehat d :\mathcal S \rightarrow \mathbb R$ is such that $\mathbb P(\widehat d\in \mathcal D) \rightarrow 1$ and $\int [\widehat d(X) ^2 g(\gamma_0,X)]dP_X =o_{\mathbb P}(1)$ (where $P_X$ is the probability measure of $X$), we have that
\begin{align*}
n^{-1} \sum_{i=1} ^n \int \widehat d (X_i)  dM_i(u) = o_{\mathbb P}(n^{-1/2}) .
\end{align*}
\end{lemma}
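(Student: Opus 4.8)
The plan is to recognise the left-hand side as a centred empirical process evaluated at the data-dependent index $\widehat{d}$ and to settle it by a stochastic equicontinuity argument over a suitable Donsker class. Since $\widehat{d}(X_i)$ does not depend on the integration variable $u$, I would first factor it out of the stochastic integral,
\[
\int \widehat{d}(X_i)\,dM_i(u)=\widehat{d}(X_i)\{\delta_i-g(\gamma_0,X_i)\,a(Y_i)\}=:\phi_{\widehat{d}}(Y_i,\delta_i,X_i),
\]
where $a(y)=\int R_y(u)\,d\Lambda_0(u)\le\Lambda_0(\tau)=\theta_0$ and, for $d:\mathcal S\to\mathbb R$, $\phi_d(y,\delta,x)=d(x)\{\delta-g(\gamma_0,x)a(y)\}$. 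Applying the compensator identity (\ref{formula:martingale}) with $h(u,x)=d(x)$ gives $E[\delta\,d(X)]=E[d(X)g(\gamma_0,X)a(Y)]$, so $E[\phi_d]=0$ for every fixed $d$. Writing $\mathbb G_n f=n^{-1/2}\sum_{i=1}^n\{f(\xi_i)-Ef\}$ for $\xi_i=(Y_i,\delta_i,X_i)$, the quantity to bound is exactly $n^{-1/2}\mathbb G_n\phi_{\widehat{d}}$, so it suffices to show that $\mathbb G_n\phi_{\widehat{d}}=o_{\mathbb P}(1)$.

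I would then control the $L_2(P)$-size of the summand. Treating $d(X)$ as a predictable integrand (it is $\sigma(X)$-measurable and constant in $u$) and invoking the predictable variation of $M$ (the conditional form of (\ref{formula:quadratic_variation}), \citep[Theorem 2.4.2]{fleming1991}, with $d\langle M\rangle(u)=g(\gamma_0,X)R(u)\,d\Lambda_0(u)$),
\[
\|\phi_d\|_{L_2(P)}^2=E\Big[\Big(\int d(X)\,dM(u)\Big)^2\Big]=E\big[d(X)^2g(\gamma_0,X)a(Y)\big]\le\theta_0\,E[d(X)^2g(\gamma_0,X)].
\]
By assumption $\int\widehat{d}(X)^2g(\gamma_0,X)\,dP_X=o_{\mathbb P}(1)$, whence $\|\phi_{\widehat{d}}\|_{L_2(P)}\overset{\mathbb P}{\longrightarrow}0$. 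If the class $\Phi=\{\phi_d:d\in\mathcal D\}$ (with the zero function adjoined as a fixed anchor) is $P$-Donsker, its asymptotic equicontinuity, combined with $\widehat{d}\in\mathcal D$ with probability tending to $1$ and $\|\phi_{\widehat{d}}\|_{L_2(P)}\to0$, forces $\mathbb G_n\phi_{\widehat{d}}=\mathbb G_n\phi_{\widehat{d}}-\mathbb G_n 0=o_{\mathbb P}(1)$, completing the argument.

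The Donsker property of $\Phi$ is the heart of the matter, and it is precisely where the remark preceding the lemma bites: the multiplier $\psi(y,\delta,x)=\delta-g(\gamma_0,x)a(y)$ is unbounded because $g(\gamma_0,X)$ is, so the product-of-Donsker-classes theorem is unavailable. The observation that rescues the argument is that the natural envelope $F_\Phi=D\,|\psi|$ of $\Phi$ is square-integrable despite this: running the isometry of the previous display with $d=D$ yields
\[
E[F_\Phi^2]=E\big[D(X)^2\{\delta-g(\gamma_0,X)a(Y)\}^2\big]=E\big[D(X)^2g(\gamma_0,X)a(Y)\big]\le\theta_0\,E[D(X)^2g(\gamma_0,X)]<+\infty.
\]
The martingale cancellation is indispensable here: the crude triangle-inequality bound $|\psi|\le\delta+\theta_0 g(\gamma_0,x)$ would demand the stronger, unavailable moment $E[D^2g^2]<\infty$. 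Since $\Phi$ is the product of $\mathcal D$, which satisfies the uniform entropy condition (\ref{cond:uniform_entropy}), with the single function $\psi$, a change of measure $d\widetilde Q\propto\psi^2\,dQ$ in the covering numbers (equivalently, uniform-entropy preservation of products, \citep[Theorem 2.10.20]{vandervaart1996}) shows that $\Phi$ again satisfies (\ref{cond:uniform_entropy}) with envelope $F_\Phi$; this step concerns only entropy and is insensitive to the unboundedness of $\psi$. The uniform entropy bound together with the square-integrable envelope $F_\Phi$ then give that $\Phi$ is $P$-Donsker \citep[Theorem 2.5.2]{vandervaart1996}. The main obstacle is therefore conceptual rather than computational: choosing the exact envelope $D|\psi|$ and exploiting the martingale isometry so that the single moment condition $E[D^2g]<\infty$ suffices, instead of the naive product route that would require $E[D^2g^2]<\infty$.
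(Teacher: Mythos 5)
Your proposal is correct and follows essentially the same route as the paper's proof: the paper likewise uses the martingale (It\^o) isometry both to show $\|\phi_{\widehat d}\|_{L_2(P)}=o_{\mathbb P}(1)$ and to verify the square-integrable envelope $E[D(X)^2(\int dM)^2]\leq \theta_0 E[D(X)^2 g(\gamma_0,X)]<+\infty$, establishes the Donsker property of $\{d(x)\int dM_{y,\delta,x}(u):d\in\mathcal D\}$ by uniform-entropy preservation for the product of $\mathcal D$ with a single function (Example 2.10.23 in van der Vaart and Wellner, 1996, versus your Theorem 2.10.20 argument), and concludes by the asymptotic-equicontinuity result for empirical processes indexed by estimated functions (Theorem 2.1 in Wellner, 2007), which is exactly the argument you spell out by hand. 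The only differences are cosmetic: you factor the integral explicitly as $d(x)\{\delta-g(\gamma_0,x)a(y)\}$ and anchor the equicontinuity at the zero function, while the paper cites the black-box theorem.
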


\begin{proof}
From the martingale property of $M$, each term of the previous empirical sum has mean $0$. To apply Theorem 2.1 in \cite{wellner2007}, two statements need to be verified. First, from Ito's isometry and because $ \int R(u) d\Lambda_0(u)\leq \theta_0$, we have
\begin{align*}
\int   \left(\int \widehat d (X) dM(u) \right)^2  dP_X&=\int  \left[   \int  \widehat d (X) ^2 g(\gamma_0, X) R(u) d\Lambda_0(u) \right] dP_X \\
&\leq \theta_0 \int \left[   \widehat d (X) ^2 g(\gamma_0, X)  \right] dP_X ,
\end{align*}
which goes to $0$ in $\mathbb P$-probability, by assumption. Second, the class of functions
\begin{align*}
\{ (y,\delta, x) \mapsto  d (x) \int dM_{y,\delta,x}(u) \, : \, d\in \mathcal D \}, 
\end{align*}
is shown to be Donsker by invoking Example 2.10.23 in \cite{vandervaart1996}. Indeed the class can be written as the product of two classes. One satisfies (\ref{cond:uniform_entropy}) and the other has only one element.  The condition on the envelop is $E[D(X)^2( \int dM)^2 ]\leq \theta_0 E[  D(X)^2 g(\gamma_0, X ) ]<+\infty   $ by assumption.
\end{proof}

\begin{lemma}\label{Lemma:equicontinuity2}
 Assume that there exist $v>0$ and $m>0$ such that $\mathbb P(\widehat h \in \text{BV} (m,v))\rightarrow 1$ and $h_0\in \text{BV} (m,v)$. If moreover, $\sup_{y\in \mathbb R_{\geq 0}} | \widehat h(y ) - h_0(y) | =o_{\mathbb P}(1)$, we have that
\begin{align*}
\sup_{y\in\mathbb R_{\geq 0}} \left| n^{-1} \sum_{i=1}^n \int_0^y (\widehat h (u) - h_0(u) ) d M_i (u)\right|  = o_{\mathbb P}(n^{-1/2}) .
\end{align*}
\end{lemma}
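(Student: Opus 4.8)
The plan is to read the target as an empirical process indexed jointly by the truncation level $y$ and to invoke the asymptotic equicontinuity of a Donsker class. Since $E[\int h\,dM]=0$ for every bounded $h$ by the martingale property (formula (\ref{formula:martingale})), setting $\phi_{h,y}(u)=\mathds 1_{\{u\le y\}}(h(u)-h_0(u))$ the quantity of interest is exactly the empirical process $\mathbb G_n\phi:=n^{-1/2}\sum_{i=1}^n\int \phi\,dM_i$ evaluated at the random index $\phi=\phi_{\widehat h,y}$, so that $\sup_y|n^{-1}\sum_i\int_0^y(\widehat h-h_0)dM_i|=o_{\mathbb P}(n^{-1/2})$ is equivalent to $\sup_y|\mathbb G_n\phi_{\widehat h,y}|=o_{\mathbb P}(1)$. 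First I would introduce the \emph{deterministic} class $\mathcal G=\{\phi_{h,y}:h\in\text{BV}(m,v),\,y\in\mathbb R_{\geq 0}\}$ and check it is contained in $\text{BV}(m',v')$ for suitable constants: each $\phi_{h,y}$ is bounded by $2m$, and multiplying $h-h_0$ by $\mathds 1_{\{u\le y\}}$ adds at most the jump $|h(y)-h_0(y)|\le 2m$ to the total variation, giving $\text{BV}(2m,2v+2m)$. (The one harmless technicality is that $u\mapsto\mathds 1_{\{u\le y\}}$ is left-continuous rather than c\`ad-l\`ag; this is absorbed in the bounded-variation bookkeeping and does not affect the stochastic integrals.)

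Next I would apply Lemma \ref{Lemma:Donskerclass}, whose hypothesis $E[g(\gamma_0,X)^2]<+\infty$ holds under (H\ref{cond:consistency_gamma_1}) since $g(\gamma_0,X)\le M_1(X)$ and $E[M_1^2(X)]<+\infty$. This gives that $\{(y,\delta,x)\mapsto\int\phi\,dM_{y,\delta,x}:\phi\in\text{BV}(m',v')\}$, and hence its subclass indexed by $\mathcal G$, is $P$-Donsker, therefore asymptotically $\rho_P$-equicontinuous for the semimetric $\rho_P(\phi,\psi)^2=E[(\int(\phi-\psi)dM)^2]$, which by formula (\ref{formula:quadratic_variation}) equals $\int(\phi-\psi)^2 Q_0\,d\Lambda_0$ with $Q_0(u)=E[g(\gamma_0,X)R(u)]$.

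Then I would estimate this semimetric at the random index, comparing against $\psi\equiv 0$ (note $0\in\mathcal G$, taking $h=h_0$, and $\mathbb G_n 0=0$). One has $\rho_P(\phi_{\widehat h,y},0)^2=\int_0^y(\widehat h-h_0)^2 Q_0\,d\Lambda_0\le(\sup_u|\widehat h(u)-h_0(u)|)^2\int Q_0\,d\Lambda_0$, and since $\int R\,d\Lambda_0\le\Lambda_0(\tau)=\theta_0$ the factor $\int Q_0\,d\Lambda_0=E[g(\gamma_0,X)\int R\,d\Lambda_0]\le\theta_0 E[M_1(X)]$ is finite. Hence $\eta_n:=\sup_{y}\rho_P(\phi_{\widehat h,y},0)\le C\sup_u|\widehat h(u)-h_0(u)|=o_{\mathbb P}(1)$, \emph{uniformly} in $y$, using the hypothesis $\sup_u|\widehat h-h_0|=o_{\mathbb P}(1)$. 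On the event $\{\widehat h\in\text{BV}(m,v)\}$ (probability $\to1$) every index $\phi_{\widehat h,y}$ lies in $\mathcal G$ within $\rho_P$-distance $\eta_n$ of $0$, so whenever $\eta_n\le\delta$ we have $\sup_y|\mathbb G_n\phi_{\widehat h,y}|\le\sup_{\phi\in\mathcal G,\,\rho_P(\phi,0)\le\delta}|\mathbb G_n\phi|$. A standard splitting then finishes: for fixed $\epsilon,\delta>0$, $\mathbb P(\sup_y|\mathbb G_n\phi_{\widehat h,y}|>\epsilon)\le\mathbb P(\widehat h\notin\text{BV}(m,v))+\mathbb P(\eta_n>\delta)+\mathbb P(\sup_{\rho_P(\phi,0)\le\delta}|\mathbb G_n\phi|>\epsilon)$, and taking $\limsup_n$ and then $\delta\downarrow 0$ sends all three terms to $0$ by the above and by equicontinuity.

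The main obstacle is the simultaneous control of the truncation level $y$ and the data-dependent integrand $\widehat h$: a one-random-function equicontinuity lemma does not directly suffice, so the key device is to fold $y$ into the fixed index set $\mathcal G$ and verify that the truncated differences $\phi_{\widehat h,y}$ remain in a single Donsker class while being uniformly $\rho_P$-small. I would stress that a naive integration by parts fails here, because it produces a term $\int_0^y \mathbb M_n\,d(\widehat h-h_0)$ with $\mathbb M_n=n^{-1/2}\sum_i M_i$ that is controlled only by the total variation of $\widehat h-h_0$, which is merely bounded, yielding $O_{\mathbb P}(1)$ instead of the required $o_{\mathbb P}(1)$; it is precisely the $L_2(P)$-smallness, exploited through asymptotic equicontinuity, that delivers the sharper rate.
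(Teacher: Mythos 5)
Your proposal is correct and follows essentially the same route as the paper: both invoke the Donsker property of Lemma \ref{Lemma:Donskerclass} for a bounded-variation class, use asymptotic equicontinuity of the resulting empirical process, and exploit the hypothesis $\sup_u|\widehat h(u)-h_0(u)|=o_{\mathbb P}(1)$ to make the relevant semimetric distance small uniformly in the truncation level $y$. The only differences are cosmetic — you fold the truncation into a single index class and compare $\phi_{\widehat h,y}$ against the zero function, where the paper compares the pair $(\widehat h\mathds 1_{[0,y]},h_0\mathds 1_{[0,y]})$ over $\mathcal H_\delta$, and you are somewhat more careful about enlarging the $\text{BV}$ constants and computing the variance semimetric explicitly.
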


\begin{proof}
We rely on the asymptotic equicontinuity of empirical processes over Donsker classes. Denote by  $Z_n$ the process $\{ Z_n (h) = n^{-1/2} \sum_{i=1}^n \int  h (u) d M_i (u)\,:\, h\in \text{BV}(m,v)\}$.
From Lemma \ref{Lemma:Donskerclass}, $Z_n$ converges weakly in the space $\ell^{\infty}( \text{BV} (m,v))$. As asymptotic tightness is necessary to characterize weak convergence \cite[Theorem 1.5.7, see also page 89]{vandervaart1996}, for every $\epsilon>0$ and every $\eta>0$, there exists $\delta >0$ such that
\begin{align*}
\limsup_{n\rightarrow +\infty}\, \mathbb  P\left(\sup_{(h,\tilde h)\in \mathcal H _\delta  } |Z_n(h)- Z_n(\tilde h) | >\epsilon \right)< \eta ,
\end{align*}
where 
\begin{align*}
\mathcal H _\delta  = \left \{(h,\tilde h)\in \text{BV}(m,v)^2\, :\, \| h- \tilde h\|_{L_2(P)}\leq \delta\right   \}.
\end{align*}
We need to show that 
\begin{align*}
\sup_{y\in\mathbb R_{\geq 0}} \left|  Z_n(\widehat h\mathds 1_{[0, y]} )- Z_n(h_0\mathds 1_{[0, y]})\right| = o_{\mathbb P}(1).
\end{align*}
First, note that $\widehat h\mathds 1_{[0, y]}$ and $h_0\mathds 1_{[0, y]}$ belong to $\text{BV} (m,v)$, with probability going to $1$. Second, we have that
\begin{align*}
\sup_{y\in \mathbb R_{\geq 0}}\|\widehat h \mathds 1_{[0, y]} -h_0\mathds 1_{[0, y]}\|_{L_2(P)}\leq \sup_{y\in \mathbb R_{\geq 0}} | \widehat h(y ) - h_0(y) |,
\end{align*}
which goes to $0$ in $\mathbb P$-probability. Consequently, we have, with probability going to $1$, that
\begin{align*}
\left\{ (\widehat h\mathds 1_{[0, y]}, h_0\mathds 1_{[0, y]}) \, :\, y\in\mathbb R\right\}\subset \mathcal H_\delta ,
\end{align*}
which implies that
\begin{align*}
\sup_{y\in\mathbb R_{\geq 0}} \left|  Z_n(\widehat h\mathds 1_{[0, y]} )- Z_n(h_0\mathds 1_{[0, y]})\right| \leq \sup_{(h,\tilde h)\in \mathcal H _\delta  } |Z_n(h)- Z_n(\tilde h) | .
\end{align*}
The fact that $\epsilon $ and $\eta$ are arbitrarily small implies the statement.
%Let $\epsilon >0$. By assumption, we have, with probability going to $1$, that $\sup_{y\in \mathbb R} |\w h(y ) - h_0(y) |\leq \epsilon $. Moreover, it holds that $h_0\in \text{BV} (M,v)$, implying that $\w h  - h_0\in \text{BV} (\epsilon ,2v)$. Then 
%\begin{align*}
%\left| n^{-1} \sum_{i=1} ^n \int (\w h (u) - h_0(u) ) dM_i(u)\right| &\leq \sup_{h\in \text{BV} (\epsilon ,2v)} \left| n^{-1} \sum_{i=1} ^n \int h(u) dM_i(u)\right|\\
%&= \sup_{h\in \text{BV} (\epsilon ,2v)} \left| \int h(u) d\overline M(u)\right|\\
%&= \epsilon \sup_{h\in \text{BV} (1,2v)} \left| \int h(u) d\overline M(u)\right|
%\end{align*} 
\end{proof}

The application of Lemma \ref{Lemma:equicontinuity1} and Lemma \ref{Lemma:equicontinuity2} requires the following result, which establishes that $\w d_{\w \gamma}-d_0$ (resp. $\w h_{\w \gamma}$) verifies the conditions on $\w d$ (resp. $\w h$) in  Lemma \ref{Lemma:equicontinuity1} (resp.  Lemma \ref{Lemma:equicontinuity2}). In what follows, the $j$-th coordinate of $d_{\gamma} $, $\widehat h_{\gamma}$ and $\nabla _{\gamma} \widehat Q_\gamma $ is denoted by $d_{\gamma,j} $, $\widehat h_{\gamma,j}$, $\nabla _{\gamma,j} \widehat Q_\gamma  $, respectively ($j=1,\ldots, q)$.

\begin{lemma}\label{lemma:dgamma_donsker&hboundedvariation}
Under (H\ref{cond:consistency_gamma_0}), (H\ref{cond:consistency_gamma_1}) and (H\ref{cond:asymptotics_gamma_2}), for every $j\in \{1,\ldots, q\}$, the class of functions $\{x\mapsto d_{\gamma,j} (x) -d_{0,j} (x)  \,:\, \gamma\in B\} $ satisfies (\ref{cond:uniform_entropy}) with the envelop $L = \sqrt { 8}( \left({1}/{m_1 } \right) (\diam( B)c_2 +M_2)+  \left( {M_2}/{m_1^2} \right) (\diam( B)c_1 +M_1))$.
Moreover, there exists $m>0$ and $v>0$ such that $\mathbb P( \{ \w h_{\gamma,j} \, :\, \gamma\in B\}  \subset  \text{BV} (m,v) ) \rightarrow 1$.
\end{lemma}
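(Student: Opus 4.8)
The plan is to establish the two assertions separately, both resting on elementary quotient-rule estimates together with the uniform control of $\widehat Q_\gamma$ and $\nabla_\gamma\widehat Q_\gamma$ already provided by Lemma \ref{Lemma:probacv}. For the uniform entropy claim I would first derive a Lipschitz-in-$\gamma$ bound for the $j$-th coordinate of $d_\gamma=\nabla_\gamma g(\gamma,\cdot)/g(\gamma,\cdot)$. Decomposing
\[
d_{\gamma,j}(x)-d_{\tilde\gamma,j}(x)
=\frac{\nabla_{\gamma,j} g(\gamma,x)-\nabla_{\gamma,j} g(\tilde\gamma,x)}{g(\gamma,x)}
+\nabla_{\gamma,j} g(\tilde\gamma,x)\,\frac{g(\tilde\gamma,x)-g(\gamma,x)}{g(\gamma,x)\,g(\tilde\gamma,x)},
\]
and inserting the lower bound $g\geq m_1$ from (H\ref{cond:consistency_gamma_1}), the Lipschitz estimates (\ref{eq:mean_value_th1}) and (\ref{eq:mean_value_th2}), and the envelope $|\nabla_\gamma g|_1<M_2$ from (H\ref{cond:asymptotics_gamma_2}), yields $|d_{\gamma,j}(x)-d_{\tilde\gamma,j}(x)|\leq |\gamma-\tilde\gamma|_1\,(c_2/m_1+M_2c_1/m_1^2)(x)$. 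Specialising to $\tilde\gamma=\gamma_0$, bounding $|\gamma-\gamma_0|_1\leq\diam(B)$, and separately using the crude pointwise bound $|d_{\gamma,j}|\leq M_2/m_1$, shows that the stated $L$ dominates both $|d_{\gamma,j}-d_{0,j}|$ and the Lipschitz modulus $\diam(B)(c_2/m_1+M_2c_1/m_1^2)$, the constant $\sqrt{8}$ absorbing the numerical factors. The moment conditions in (H\ref{cond:consistency_gamma_1}) and (H\ref{cond:asymptotics_gamma_2}) then guarantee $E[L^2(X)g(\gamma_0,X)]<\infty$, as is needed for Lemma \ref{Lemma:equicontinuity1}.

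Given the Lipschitz bound, I would verify (\ref{cond:uniform_entropy}) by the standard covering argument for Euclidean-indexed classes, exactly as in the preliminary step of Lemma \ref{Lemma:GCclass}. For any finitely discrete probability measure $Q$, an $\epsilon$-net of $B$ in $|\cdot|_1$ transfers, through the Lipschitz bound and the domination by $L$, into a cover of the class of radius proportional to $\epsilon\|L\|_{L_2(Q)}$, so that $\mathcal N(\epsilon\|L\|_{L_2(Q)},\mathcal F,L_2(Q))\leq K\epsilon^{-q}$ uniformly in $Q$ (cf.\ Theorem 2.7.11 in \cite{vandervaart1996}). Since $\int_0^1\sqrt{\log(K\epsilon^{-q})}\,d\epsilon<\infty$ and the integrand vanishes once the whole class fits in one ball, the uniform entropy integral converges. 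The crux here, and the main obstacle of the lemma, is precisely that one must work with uniform covering numbers rather than the weaker Donsker property, as flagged in the remark preceding Lemma \ref{Lemma:equicontinuity1}, because $g(\gamma,X)$ is not assumed bounded; this is why the envelope $L$ must be chosen to bound the function pointwise, to dominate the Lipschitz modulus, and to be square-integrable against $g(\gamma_0,\cdot)$ all at once.

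For the bounded-variation claim I would exploit that each $R_i(\cdot)$ is a non-increasing step function with total variation $\Delta_i\leq 1$. Hence $\widehat Q_\gamma=n^{-1}\sum_i g(\gamma,X_i)R_i$ is non-increasing with total variation $\widehat Q_\gamma(0)\leq 2E[M_1(X)]$, while $\nabla_{\gamma,j}\widehat Q_\gamma=n^{-1}\sum_i\nabla_{\gamma,j} g(\gamma,X_i)R_i$ is a finite combination of monotone functions of total variation at most $n^{-1}\sum_i M_2(X_i)$, hence at most $2E[M_2(X)]$ with probability tending to $1$ by the law of large numbers; the sup-norms are controlled by the same quantities through (\ref{convergence:12}) and (\ref{convergence:21.1}). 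Writing $\widehat h_{\gamma,j}=\nabla_{\gamma,j}\widehat Q_\gamma\cdot(\widehat Q_\gamma)^{-1}$ and invoking the elementary inequalities $\mathrm{TV}((\widehat Q_\gamma)^{-1})\leq \mathrm{TV}(\widehat Q_\gamma)/(\inf\widehat Q_\gamma)^2$ and $\mathrm{TV}(AB)\leq\|A\|_\infty\mathrm{TV}(B)+\|B\|_\infty\mathrm{TV}(A)$, together with the uniform lower bound $\inf_{\gamma,y}\widehat Q_\gamma(y)\geq E[m_1(X)(1-\Delta)]/2$ from (\ref{convergence:12}), produces constants $m$ and $v$ depending only on the fixed expectations in (H\ref{cond:consistency_gamma_1}) and (H\ref{cond:asymptotics_gamma_2}) such that $\{\widehat h_{\gamma,j}:\gamma\in B\}\subset\text{BV}(m,v)$ with probability tending to $1$; all bounds are uniform in $\gamma$ since the controls from Lemma \ref{Lemma:probacv} are.

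In summary, the bounded-variation half is comparatively routine once the lower bound on $\widehat Q_\gamma$ is supplied by Lemma \ref{Lemma:probacv}, reducing to bookkeeping with the monotonicity of the $R_i$ and the product and reciprocal total-variation inequalities. The genuine difficulty lies in the uniform entropy estimate, where the envelope must be calibrated so that the Lipschitz structure can be upgraded to a covering-number bound holding uniformly over all finitely discrete $Q$, which is the stronger property (\ref{cond:uniform_entropy}) actually consumed by Lemma \ref{Lemma:equicontinuity1}.
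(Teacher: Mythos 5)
Your proof is correct, but its entropy half takes a genuinely different route from the paper's. You prove directly that the quotient class $\{d_{\gamma,j}-d_{0,j}:\gamma\in B\}$ is Lipschitz in the index $\gamma$, with modulus $c_3=c_2/m_1+M_2c_1/m_1^2$ obtained from your two-term decomposition, and then apply the parametric covering bound (Theorem 2.7.11 in \cite{vandervaart1996}) once, rescaling from $\|c_3\|_{L_2(Q)}$ to $\|L\|_{L_2(Q)}$ by proportionality. The paper instead establishes the uniform entropy condition separately for the classes $\mathcal G=\{g(\gamma,\cdot):\gamma\in B\}$ and $\dot{\mathcal G}_j=\{\nabla_{\gamma,j}g(\gamma,\cdot):\gamma\in B\}$ (each via Theorem 2.7.11, giving (\ref{ineq:covering_G}) and (\ref{ineq:covering_G_point})), derives the pointwise inequality (\ref{ineq:dgamma_lipschitz}) for the map $(\dot g, g)\mapsto \dot g/g$, and then invokes the preservation theorem for uniform entropy under Lipschitz transformations of classes (Theorem 2.10.20 in \cite{vandervaart1996}); the factor $\sqrt 8$ in $L$ is exactly the constant produced by that inequality, whereas in your argument it is mere slack. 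Your route is more elementary: it exploits that the index set is a compact subset of $\mathbb R^q$, so the quotient class is itself a parametric Lipschitz class and no preservation machinery is needed. The paper's route is more modular: the entropy bounds for $\mathcal G$ and $\dot{\mathcal G}_j$ are reusable, and Theorem 2.10.20 would still apply to transformations that destroy the Lipschitz-in-$\gamma$ structure. For the bounded-variation half you follow essentially the paper's argument, only packaged differently: your total-variation product and reciprocal rules, together with the law of large numbers for $n^{-1}\sum_i M_2(X_i)$, replace the paper's hand-rolled increment bound and its telescoping along the monotone majorant $\widehat T_{\gamma,j}$ on the event $A$; both rest on the same uniform controls (\ref{convergence:12}) and (\ref{convergence:21.1}) from Lemma \ref{Lemma:probacv}. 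One wording caution: $L$ need not dominate the Lipschitz modulus $c_3$ pointwise when $\diam(B)<1/\sqrt 8$; what your covering argument actually uses---and all it needs---is domination up to the fixed constant $(\sqrt 8\,\diam(B))^{-1}$.
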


\begin{proof}
The fact that \citep[Section 2.1.1]{vandervaart1996}
\begin{align*}
\mathcal N_{}\left(2 \epsilon  \|c_1\|_{L_2(Q)},\mathcal G, L_2(Q)\right)\leq \mathcal N_{[\,]}\left( 2 \epsilon  \|c_1\|_{L_2(Q)},\mathcal G, L_2(Q)\right),
\end{align*}
together with (\ref{ineq:bracketing_G}) when $p=2$, implies that
\begin{align}\label{ineq:covering_G}
\mathcal N_{}\left(2  \epsilon  \|c_1\|_{L_2(Q)},\mathcal G, L_2(Q)\right)\leq  K \epsilon^{-q}.
\end{align}
Let  $j\in \{1,\ldots, q\}$, and define  $\dot{\mathcal G}_j=\left\{ x \mapsto   \nabla _{\gamma,j} g(\gamma,x ) \,:\, \gamma\in B \right\}$. Then similarly as for the class $\mathcal G$, using (\ref{eq:mean_value_th2}) and invoking Theorem 2.7.11 in \cite{vandervaart1996} with the $L_2(Q)$-norm,  we find that
\begin{align}\label{ineq:covering_G_point}
\mathcal N_{}\left( 2\epsilon  \|c_2\|_{L_2(Q)},\dot{\mathcal G}_j, L_2(Q)\right) \leq K \epsilon^{-q}.
\end{align}
 The two previous inequalities continue to hold when the functions $2c_1$ and $2c_2$ are replaced by  $ \diam( B) c_1 +M_1  $ and $\diam( B)  c_2 +M_2  $, respectively. Because these two functions are enveloppes for $\mathcal G$ and $\dot{\mathcal G}_j$, (\ref{cond:uniform_entropy}) is satisfied for $\mathcal G$ and $\dot{\mathcal G}_j$ with the enveloppes $ L_1 = \diam( B) c_1 +M_1  $ and $L_2 = \diam( B)  c_2 +M_2  $. We are now interested in the quotient class formed by the elements $\dot g/g$, when $\dot g \in \dot{\mathcal G}_j$ and $g\in \mathcal G$. Note that, for every $\dot g_1, \dot g_2$ in $\dot{\mathcal G}_j$ and $g_1$, $g_2$ in $\mathcal G$,
\begin{align}
\left|\frac{ \dot g_1}{g_1} - \frac{\dot g_2}{g_1}\right|^2&\leq 2 \left|\frac{1}{\dot g_2 } \right| ^2\, |\dot g_1-\dot g_2|^2+2  \left| \frac{\dot g_2}{g _1g_2 } \right|^2\, |g_1-g_2|^2 \notag \\
&\leq   2\left|\frac{1}{m_1 } \right|^2 \, | \dot g_1-\dot g_2|^2 +  2 \left| \frac{M_2}{m_1^2} \right|^2 \, |g_1-g_2|^2 \label{ineq:dgamma_lipschitz}.
\end{align}
From the previous display, and because $\sqrt{ a+b} \leq \sqrt a+\sqrt b $ for $a\geq 0$ and $b\geq 0$,  an envelop for $ \dot{\mathcal G}_j / {\mathcal G} - d_{0,j}$ is given by $ \sqrt 8 ( (1/m_1) L_2 + (M_2/m_1^2) L_1 ) $ which is equal to $L$ given in the statement.
As (\ref{ineq:covering_G}), (\ref{ineq:covering_G_point}) and (\ref{ineq:dgamma_lipschitz}) holds, we can apply Theorem 2.10.20 in \cite{vandervaart1996} on the classes $\mathcal G$, $\dot{ \mathcal G}_j$ and $d_{ 0,k}$, to obtain that
\begin{align*}
\int_0^{+\infty} \sup_Q \sqrt { \log \mathcal N \left( \epsilon \| L \|_{L_2(Q)} , \dot{\mathcal G}_j / {\mathcal G}-d_{0,j}, L_2(Q)\right) } d\epsilon <+\infty ,
\end{align*}
where the supremum is taken over the finitely discrete probability measures. We have shown the first statement of the Lemma.

Let $\|f\| _{\text{tv}} $ denote the total variation of $f$ over $\overline {\mathbb R_{\geq 0}}$. To show the second statement, we need to prove that
\begin{align*}
\mathbb P\left(\sup_{\gamma\in B}\| \widehat h_{\gamma, j} \| _{\text{tv}}\leq v ,\, \sup_{\gamma\in B,\, y\in \mathbb R_{\geq 0}}| \widehat h_{\gamma, j}(y) |\leq m \right ) \longrightarrow 1.
\end{align*}
Define, for every $y\in \mathbb R_{\geq 0}$,
\begin{align*}
\widehat T_{\gamma,j}(y)= n^{-1} \sum_{i=1} ^n | \nabla _{\gamma,j}  g(\gamma, X_i ) | R_i(y).
\end{align*}
Introduce the event
\begin{eqnarray*}
A &=& \left \{ \omega :  E[ m_1(X) (1-\Delta)]/2 \leq  \widehat Q_\gamma (y )  \leq 2E[M_1(X)] \quad \right. \\
&& \hspace*{1cm} \left. \text{and } \widehat T_\gamma(u)\leq 2E[ M_2(X)] \text{ for all } \gamma\in B ,\,  y\in \mathbb R_{\geq 0} \right\} .
\end{eqnarray*}
%Let , by Lemma \ref{Lemma:probacv}, equation (\ref{convergence:12}), 
On the set $A$, we have
\begin{align}\label{bound:1}
\sup_{\gamma\in B,\, y\in \mathbb R_{\geq 0}}\left| \widehat h_{\gamma, j}(y) \right|\leq   \frac{4E[M_2(X)] }{E[m_1(X)(1-\Delta)] }.
\end{align}
On $A$, we also have that, for all $\gamma\in B$ and $u < v$ in $\overline{\mathbb R_{\geq 0}}$,
\begin{align*}
|\widehat h_{\gamma, j} (u) -\widehat h_{\gamma, j} (v) | &= \left| \frac{\nabla _{\gamma,j} \widehat Q_\gamma (u) }{\widehat Q_\gamma (u)} - \frac{\nabla _{\gamma,j} \widehat Q_\gamma (v) }{\widehat Q_\gamma (v)} \right| \\
&\leq \left( 2 \frac{ | \nabla _{\gamma,j} \widehat Q_\gamma (u)-\nabla _{\gamma,j} \widehat Q_\gamma (v)|}{E[m_1(X)(1-\Delta)] } + 8\frac{E[M_2(X)] \, | \widehat Q_\gamma (u) -\widehat Q_\gamma (v) | }{E[m_1(X)(1-\Delta)] ^{2}}\right) .
\end{align*} 
It follows that, there exists a $C>0$ such that, for all $\gamma\in B$ and $u < v$ in $\overline{\mathbb R_{\geq 0}}$,
\begin{align*}
| \widehat h_{\gamma, j} (u) - \widehat h_{\gamma, j} (v) | \leq C \left(  | \widehat T_{\gamma,j} (u)- \widehat T_{\gamma ,j} (v)| + | \widehat Q_\gamma (u) -\widehat Q_\gamma (v) |\right).
\end{align*} 
Apply the previous inequality and use the fact that $\widehat T_\gamma$ and $\widehat Q_\gamma$ are non-increasing functions to obtain that, on $A$, for all $\gamma\in B$ and any set of points $u_0<u_1\ldots <u_N $,
\begin{align*}
\sum_{k=1}^N | \widehat h_{\gamma, j} (u_k) - \widehat h_{\gamma, j} (u_{k-1}) | &\leq C  \left(   \sum_{k=1}^N | \widehat T_{\gamma,j} (u_k)- \widehat T_{\gamma,j} (u_{k-1})| +\sum_{k=1}^N | \widehat Q_\gamma (u_k) -\widehat Q_\gamma (u_{k-1}) |\right)\\
&=  C  \left ( \widehat T_{\gamma,j} (u_0)- \widehat T_{\gamma,j} (u_{N}) +\widehat Q_\gamma (u_0) -\widehat Q_\gamma (u_{N}) \right)\\
&\leq C   \left( \widehat T_{\gamma,j} (0)  +\widehat Q_\gamma (0) \right)\\
&\leq  C   \left( \sup_{\gamma\in B} \{ \widehat T_{\gamma,j} (0) \}   +\sup_{\gamma\in B} \{ \widehat Q_\gamma (0)\} \right).
\end{align*} 
Consequently, on $A$,
 \begin{align}\label{bound:2}
\sup_{\gamma\in B} \| \widehat h_{\gamma, k}  \| _{\text{tv}} \leq  2 C(E[M_1(X)]+E[M_2(X)]).  
 \end{align}
Hence, with (\ref{bound:1}) and (\ref{bound:2}) we have found $m$ and $v$ such that
\begin{align*}
\mathbb P(A) \leq \mathbb P\left(\sup_{\gamma\in B}\| \widehat h_{\gamma, j} \| _{\text{tv}}\leq v ,\, \sup_{\gamma\in B,\, y\in \mathbb R_{\geq 0}}| \widehat h_{\gamma, j}(y) |\leq m \right ).
\end{align*} 
From Lemma \ref{Lemma:probacv}, statements (\ref{convergence:12}) and (\ref{convergence:21.1}), $\mathbb P(A) $ goes to $1$ and hence the result follows.
\end{proof}

\end{appendices}

\vspace*{.5cm}

%\begin{center}
\noindent
{\bf \large Acknowledgments} \\
%\end{center}
This work was supported by Interuniversity Attraction Pole Research Network P7/06 of the Belgian State (Belgian Science Policy).  F. Portier was in addition supported by Fonds de la Recherche Scientifique (FNRS) A4/5 FC 2779/2014- 2017 No. 22342320, I. Van Keilegom was also supported by the European Research Council (2016-2021, Horizon 2020/ ERC grant agreement No.\ 694409), and A. El Ghouch was also supported by the PDR (convention PDR.T.0080.16), a funding instrument of the FNRS.

\bibliographystyle{chicago}
\bibliography{finalversion_arkiv.bbl}

\begin{thebibliography}{}

\bibitem[\protect\citeauthoryear{Andersen, Borgan, Gill, and Keiding}{Andersen
  et~al.}{1993}]{andersen+b+g+k:1993}
Andersen, P.~K., {\O}.~Borgan, R.~D. Gill, and N.~Keiding (1993).
\newblock {\em Statistical Models based on Counting Processes}.
\newblock Springer Series in Statistics. Springer-Verlag, New York.

\bibitem[\protect\citeauthoryear{Andersen and Gill}{Andersen and
  Gill}{1982}]{gill1982}
Andersen, P.~K. and R.~D. Gill (1982).
\newblock Cox's regression model for counting processes: a large sample study.
\newblock {\em Ann. Statist.\/}~{\em 10\/}(4), 1100--1120.

\bibitem[\protect\citeauthoryear{Bickel, Klaassen, Ritov, and Wellner}{Bickel
  et~al.}{1993}]{wellner1993}
Bickel, P.~J., C.~A.~J. Klaassen, Y.~Ritov, and J.~A. Wellner (1993).
\newblock {\em Efficient and Adaptive Estimation for Semiparametric Models}.
\newblock Johns Hopkins University Press, Baltimore, MD.

\bibitem[\protect\citeauthoryear{Breslow}{Breslow}{1972}]{breslow1972}
Breslow, N. (1972).
\newblock Contribution to the discussion of the paper by {D}.{R}. {C}ox.
\newblock {\em J. Roy. Statist. Soc. - Ser. B\/}~{\em 34}, 187--220.

\bibitem[\protect\citeauthoryear{Chen, Ibrahim, and Sinha}{Chen
  et~al.}{1999}]{chen1999}
Chen, M.-H., J.~G. Ibrahim, and D.~Sinha (1999).
\newblock A new {B}ayesian model for survival data with a surviving fraction.
\newblock {\em J. Amer. Statist. Assoc.\/}~{\em 94\/}(447), 909--919.

\bibitem[\protect\citeauthoryear{Collett}{Collett}{2015}]{collett:2015}
Collett, D. (2015).
\newblock {\em Modelling Survival Data in Medical Research}.
\newblock CRC press.

\bibitem[\protect\citeauthoryear{Cox}{Cox}{1972}]{cox1972}
Cox, D.~R. (1972).
\newblock Regression models and life-tables (with discussion).
\newblock {\em J. Roy. Statist. Soc. - Ser. B\/}~{\em 34}, 187--220.

\bibitem[\protect\citeauthoryear{Cui, H{\"a}rdle, and Zhu}{Cui
  et~al.}{2011}]{hardle2011}
Cui, X., W.~K. H{\"a}rdle, and L.~Zhu (2011).
\newblock The {EFM} approach for single-index models.
\newblock {\em Ann. Statist.\/}~{\em 39\/}(3), 1658--1688.

\bibitem[\protect\citeauthoryear{Dudley}{Dudley}{1992}]{dudley1992}
Dudley, R.~M. (1992).
\newblock Fr\'echet differentiability, {$p$}-variation and uniform {D}onsker
  classes.
\newblock {\em Ann. Probab.\/}~{\em 20\/}(4), 1968--1982.

\bibitem[\protect\citeauthoryear{Fleming and Harrington}{Fleming and
  Harrington}{1991}]{fleming1991}
Fleming, T.~R. and D.~P. Harrington (1991).
\newblock {\em Counting Processes and Survival Analysis}.
\newblock New York: John Wiley \& Sons Inc.

\bibitem[\protect\citeauthoryear{Huang and Liu}{Huang and
  Liu}{2006}]{huang:2006}
Huang, J.~Z. and L.~Liu (2006).
\newblock Polynomial spline estimation and inference of proportional hazards
  regression models with flexible relative risk form.
\newblock {\em Biometrics\/}~{\em 62\/}(3), 793--802.

\bibitem[\protect\citeauthoryear{Ibrahim, Chen, and Sinha}{Ibrahim
  et~al.}{2001}]{ibrahim2001}
Ibrahim, J., M.-H. Chen, and D.~Sinha (2001).
\newblock Bayesian semiparametric models for survival data with a cure
  fraction.
\newblock {\em Biometrics\/}~{\em 57}, 383--388.

\bibitem[\protect\citeauthoryear{Klein and Moeschberger}{Klein and
  Moeschberger}{1997}]{klein+m:2005}
Klein, J.~P. and M.~L. Moeschberger (1997).
\newblock {\em Survival {A}nalysis: {T}echniques for {C}ensored and {T}runcated
  {D}ata}.
\newblock Springer Science. New York.

\bibitem[\protect\citeauthoryear{Kosorok}{Kosorok}{2008}]{kosorok2008}
Kosorok, M.~R. (2008).
\newblock {\em Introduction to Empirical Processes and Semiparametric
  Inference}.
\newblock New York: Springer.

\bibitem[\protect\citeauthoryear{Lin and Kulasekera}{Lin and
  Kulasekera}{2007}]{lin+k:2007}
Lin, W. and K.~B. Kulasekera (2007).
\newblock Identifiability of single-index models and additive-index models.
\newblock {\em Biometrika\/}~{\em 94\/}(2), 496--501.

\bibitem[\protect\citeauthoryear{Murphy}{Murphy}{1994}]{murphy1994}
Murphy, S.~A. (1994).
\newblock Consistency in a proportional hazards model incorporating a random
  effect.
\newblock {\em Ann. Statist.\/}~{\em 22\/}(2), 712--731.

\bibitem[\protect\citeauthoryear{Murphy and Van~der Vaart}{Murphy and Van~der
  Vaart}{2000}]{murphy2000}
Murphy, S.~A. and A.~W. Van~der Vaart (2000).
\newblock On profile likelihood.
\newblock {\em J. Amer. Statist. Assoc.\/}~{\em 95\/}(450), 449--485.

\bibitem[\protect\citeauthoryear{Newey and McFadden}{Newey and
  McFadden}{1994}]{newey1994}
Newey, W.~K. and D.~McFadden (1994).
\newblock Large sample estimation and hypothesis testing.
\newblock In {\em Handbook of econometrics, {V}ol.\ {IV}}, Volume~2 of {\em
  Handbooks in Econom.}, pp.\  2111--2245. Elsevier.

\bibitem[\protect\citeauthoryear{Portier and Delyon}{Portier and
  Delyon}{2013}]{portier+d:2013}
Portier, F. and B.~Delyon (2013).
\newblock Optimal transformation: a new approach for covering the central
  subspace.
\newblock {\em J. Multivariate Anal.\/}~{\em 115}, 84--107.

\bibitem[\protect\citeauthoryear{Portier, El~Ghouch, and Van~Keilegom}{Portier
  et~al.}{2017}]{portier+e+v:2017}
Portier, F., A.~El~Ghouch, and I.~Van~Keilegom (2017).
\newblock Efficiency and bootstrap in the promotion time cure model.
\newblock {\em Bernoulli\/}~{\em 23\/}(4B), 3437--3468.

\bibitem[\protect\citeauthoryear{Ritov and Wellner}{Ritov and
  Wellner}{1988}]{wellner1988}
Ritov, Y. and J.~A. Wellner (1988).
\newblock Censoring, martingales, and the {C}ox model.
\newblock In {\em Statistical Inference from Stochastic Processes ({I}thaca,
  {NY}, 1987)}, Volume~80 of {\em Contemp. Math.}, pp.\  191--219. Providence,
  RI: Amer. Math. Soc.

\bibitem[\protect\citeauthoryear{Sasieni}{Sasieni}{1992}]{sasieni:1992}
Sasieni, P. (1992).
\newblock Information bounds for the conditional hazard ratio in a nested
  family of regression models.
\newblock {\em J. Roy. Statist. Soc. Ser. B\/}~{\em 54\/}(2), 617--635.

\bibitem[\protect\citeauthoryear{Therneau and Grambsch}{Therneau and
  Grambsch}{2000}]{grambsch+t:2000}
Therneau, T.~M. and P.~M. Grambsch (2000).
\newblock {\em Modeling Survival Data: Extending the Cox Model}.
\newblock Springer Science \& Business Media.

\bibitem[\protect\citeauthoryear{Tsodikov}{Tsodikov}{1998a}]{tsodikov1998a}
Tsodikov, A. (1998a).
\newblock Asymptotic efficiency of a proportional hazards model with cure.
\newblock {\em Statist. Probab. Lett.\/}~{\em 39}, 237--244.

\bibitem[\protect\citeauthoryear{Tsodikov}{Tsodikov}{1998b}]{tsodikov1998b}
Tsodikov, A. (1998b).
\newblock A proportional hazards model taking account of long-term survivors.
\newblock {\em Biometrics\/}, 1508--1516.

\bibitem[\protect\citeauthoryear{Tsodikov}{Tsodikov}{2001}]{tsodikov2001}
Tsodikov, A. (2001).
\newblock Estimation of survival based on proportional hazards when cure is a
  possibility.
\newblock {\em Mathematical and Computer Modelling\/}~{\em 33}, 1227--1236.

\bibitem[\protect\citeauthoryear{Tsodikov, Ibrahim, and Yakovlev}{Tsodikov
  et~al.}{2003}]{tsodikov2003}
Tsodikov, A., J.~Ibrahim, and A.~Yakovlev (2003).
\newblock Estimating cure rates from survival data: an alternative to
  two-component mixture models.
\newblock {\em J. Amer. Statist. Assoc.\/}~{\em 98}, 1063--1078.

\bibitem[\protect\citeauthoryear{Van~der Vaart and Wellner}{Van~der Vaart and
  Wellner}{2000}]{vandervaart+w:2000}
Van~der Vaart, A. and J.~A. Wellner (2000).
\newblock Preservation theorems for {G}livenko-{C}antelli and uniform
  {G}livenko-{C}antelli classes.
\newblock In {\em High dimensional probability, {II} ({S}eattle, {WA}, 1999)},
  Volume~47 of {\em Progr. Probab.}, pp.\  115--133. Birkh\"auser Boston,
  Boston, MA.

\bibitem[\protect\citeauthoryear{Van~der Vaart}{Van~der
  Vaart}{1998}]{vandervaart1998}
Van~der Vaart, A.~W. (1998).
\newblock {\em Asymptotic Statistics}, Volume~3.
\newblock Cambridge: Cambridge University Press.

\bibitem[\protect\citeauthoryear{Van~der Vaart and Wellner}{Van~der Vaart and
  Wellner}{1996}]{vandervaart1996}
Van~der Vaart, A.~W. and J.~A. Wellner (1996).
\newblock {\em Weak Convergence and Empirical Processes}.
\newblock Springer-Verlag, New York.

\bibitem[\protect\citeauthoryear{Van~der Vaart and Wellner}{Van~der Vaart and
  Wellner}{2007}]{wellner2007}
Van~der Vaart, A.~W. and J.~A. Wellner (2007).
\newblock Empirical processes indexed by estimated functions.
\newblock In {\em Asymptotics: particles, processes and inverse problems},
  Volume~55 of {\em IMS Lecture Notes Monogr. Ser.}, pp.\  234--252. Beachwood,
  OH: Inst. Math. Statist.

\bibitem[\protect\citeauthoryear{Wang, Klijn, Sieuwerts, Look, Yang, Talantov,
  Timmermans, van Gelder, Yu, Jatkoe, Berns, Atkins, et~al.}{Wang
  et~al.}{2005}]{Wang:2005}
Wang, Y., J.~G.~M. Klijn, A.~M. Sieuwerts, M.~P. Look, F.~Yang, D.~Talantov,
  M.~Timmermans, M.~E. M.~M. van Gelder, J.~Yu, T.~Jatkoe, E.~M. J.~J. Berns,
  D.~Atkins, et~al. (2005).
\newblock Gene-expression profiles to predict distant metastasis of
  lymph-node-negative primary breast cancer.
\newblock {\em The Lancet\/}~{\em 365}, 671?679.

\bibitem[\protect\citeauthoryear{Wellner and Zhan}{Wellner and
  Zhan}{1996}]{wellner1996}
Wellner, J.~A. and Y.~Zhan (1996).
\newblock Bootstrapping {$Z$}-estimators.
\newblock {\em University of Washington, Department of Statistics, Technical
  Report\/}~{\em 308}.

\bibitem[\protect\citeauthoryear{Yakovlev and Tsodikov}{Yakovlev and
  Tsodikov}{1996}]{yakovlev1996}
Yakovlev, A. and A.~Tsodikov (1996).
\newblock {\em Stochastic Models of Tumor Latency and Their Biostatistical
  Applications}.
\newblock World Scientific.

\bibitem[\protect\citeauthoryear{Zeng, Yin, and Ibrahim}{Zeng
  et~al.}{2006}]{zeng2006}
Zeng, D., G.~Yin, and J.~G. Ibrahim (2006).
\newblock Semiparametric transformation models for survival data with a cure
  fraction.
\newblock {\em J. Amer. Statist. Assoc.\/}~{\em 101\/}(474), 670--684.

\end{thebibliography}
%\bibliography{CURE.bib}

\end{document}